\documentclass[12pt,leqno, final, oneside]{amsart}
\usepackage{showkeys}
\usepackage[latin1]{inputenc}
\usepackage[all]{xy}
\usepackage{bbm}
\usepackage{amsmath}
\usepackage{amssymb}
\usepackage{amsthm}
\usepackage{amsfonts}
\usepackage{graphicx}
\usepackage{eurosym}
\usepackage{color}
\usepackage{nicefrac}
\usepackage[pdftex]{hyperref}
\usepackage{fancyhdr}
\usepackage{pict2e}
\usepackage{lineno}

\newcommand{\IC}{{\mathbb C}}

\newcommand{\IW}{{\mathbb W}}

\newcommand{\IR}{{\mathbb R}}
\newcommand{\IZ}{{\mathbb Z}}

\newcommand{\ka}{{\mathcal A}}

\newcommand{\kh}{{\mathcal H}}
\newcommand{\ki}{{\mathcal I}}
\newcommand{\kj}{{\mathcal J}}
\newcommand{\kk}{{\mathcal K}}
\newcommand{\kl}{{\mathcal L}}

\newcommand{\ko}{{\mathcal O}}
\newcommand{\kp}{{\mathcal P}}

\newcommand{\ku}{{\mathcal U}}

\newcommand{\grass}{\mathcal{G}}

\renewcommand{\labelenumi}{(\roman{enumi})}

\DeclareMathOperator{\dimension}{dim}

\DeclareMathOperator{\Mor}{Mor}

\DeclareMathOperator{\type}{type}
\DeclareMathOperator{\limes}{lim}

\DeclareMathOperator{\conv}{conv}

\newcommand*{\leftindices}[4][]{\ensuremath{%
  {\vphantom{\ifx\\#1\\#2\else#1\fi}}^{#3}_{#4}#2}}

\newtheorem{theorem}{Theorem}[section]
\newtheorem{lemma}[theorem]{Lemma}
\newtheorem{proposition}[theorem]{Proposition}
\newtheorem{corollary}[theorem]{Corollary}

\theoremstyle{remark}
\newtheorem{remark}[theorem]{Remark}

\theoremstyle{definition}
\newtheorem{definition}[theorem]{Definition}
\newtheorem{notation}[theorem]{Notation}

\setlength{\unitlength}{1.6cm}
\setlength{\headheight}{10pt}

\setlength{\textwidth}{15cm}
\setlength{\marginparwidth}{1cm}
\setlength{\oddsidemargin}{0.5cm}
\setlength{\evensidemargin}{0.5cm} \setlength{\topmargin}{0cm}
\setlength{\footskip}{2cm}
\setlength{\textheight}{21cm}

\setcounter{tocdepth}{1}

\pagestyle{fancy} \lhead[\thepage]{} \chead{} \rhead[]{\thepage}
\lfoot[]{}
\rfoot[]{}
\cfoot[]{}

\begin{document}

\title{MV-Polytopes via affine buildings}
\author{Michael Ehrig*}

\thanks{*This research has been partially supported by the EC TMR network "LieGrits", contract MRTNCT 2003-505078 and the DFG-Graduiertenkolleg 1052, 2000 Mathematics Subject Classification. 22E46, 14M15.}

\address{Mathematical Institute, University of Cologne, Weyertal 86-90, 50931 Cologne, Germany}
\email{mehrig@mi.uni-koeln.de}

\begin{abstract}
For an algebraic group $G$, Anderson originally defined the notion of MV-polytopes in \cite{And}, images of MV-cycles, defined in \cite{MirVil}, under the moment map of the corresponding affine Grassmannian. It was shown by Kamnitzer in \cite{Kam1} and \cite{Kam2} that these polytopes can be described via tropical relations and give rise to a crystal structure on the set of MV-cycles. Another crystal structure can be introduced using LS-galleries which were defined by Gaussent and Littelmann in \cite{GauLit}, a more discrete version of Littelmann's path model. 

The main result of this paper is a direct combinatorial construction of the MV-polytopes using LS-galleries. In addition we link this construction to the retractions of the affine building and the Bott-Samelson variety corresponding to $G$. This leads to a definition of MV-polytopes not involving the tropical Pl\"ucker relations. Herefore it provides a description of the polytopes independent of the type of the algebraic group via the gallery model and affine buildings.
\end{abstract}

\maketitle

\tableofcontents

%%%%%%%%%%%%%%%%%%%%%%%%%%%%%%%%%%%%%%%%%%%%%%%%%%%%%%%%%%%%%%%%%%%%%%%%%%%%%%%%%%%%%%%%%%%%%%%%%%%%%%%%%%%%%%%%%%%%%%%%%%%%%%%%%%%%%%%%
%%%%%%%%%%%%%%%%%%%%%%%%%%%%%%%%%%%%%%%%%%%%%%%%%%%%%%%%%%%%%%%%%%%%%%%%%%%%%%%%%%%%%%%%%%%%%%%%%%%%%%%%%%%%%%%%%%%%%%%%%%%%%%%%%%%%%%%%
%%%%%%%%%%%%%%%%%%%%%%%%%%%%%%%%%%%%%%%%%%%%%%%%%%%%%%%%%%%%%%%%%%%%%%%%%%%%%%%%%%%%%%%%%%%%%%%%%%%%%%%%%%%%%%%%%%%%%%%%%%%%%%%%%%%%%%%%

\section{Introduction}
The aim of this paper is to provide a connection between the LS-gallery model \cite{GauLit} for finite-dimensional representations of a connected complex semisimple algebraic group $G$ and the Mirkovi{\'c}-Vilonen polytopes (MV-polytopes for short). These appear in \cite{And} and \cite{Kam1} as images of MV-cycles \cite{MirVil} under the moment map, via retractions in the affine Tits building associated with $G$. It is well known that both, the LS-gallery model and the MV-polytopes are combinatorial realisations of finite-dimensional representations of $G$. We provide a combinatorial link between these two objects and explicitly construct MV-polytopes by starting with an LS-gallery $\delta$ and then defining a new gallery for each vertex of the MV-polytope. In addition we show that the constructed galleries are the images of the retractions of the affine Tits building, when applied to an open subset of the Bialynicki-Birula cell corresponding to $\delta$. 

This gives an alternative definition of MV-polytopes as well as a new proof for Kamnitzer's result that the MV-polytopes are a set of polytopes whose edge lengths are the Lusztig datum of the canonical basis element corresponding to the polytope. This new proof is independent of the type and does not involve the tropical Pl\"ucker relations.

We start by recalling the necessary definitions for the group $G$ (in Section \ref{section_2}), the affine Grassmannian, MV-cycles and MV-polytopes (in Section \ref{section_3}), and LS-galleries and retractions of the Bott-Samelson variety (in Section \ref{section_4}).

Afterwards, in Section \ref{section_5}, we analyse alcoves appearing in a fixed gallery $\delta$ and how one can partition a gallery, according to the properties of these alcoves, into parts consisting of alcoves with the same properties. We also discuss the interaction of the root operators for LS-galleries and our defined partition. In addition we show that by using these partitions one can construct a specific gallery for each element $w$ of the Weyl group of $G$, named $\Xi_w(\delta)$.

In Section \ref{section_6} we prove our main theorem.
\begin{theorem}
Let $M_\delta$ be an MV-cycle corresponding to an LS-gallery $\delta$ and $P=\boldsymbol{\mu}(M_\delta)$ the corresponding MV-polytope, then
$$ P = P_\delta:=\conv(\{wt(\Xi_w(\delta)) \mid w \in W \} ). $$
\end{theorem}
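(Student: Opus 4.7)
The plan is to exploit the classical fact that for a projective $T$-variety the image under the moment map of an irreducible $T$-invariant closed subvariety is the convex hull of the weights of the $T$-fixed points it contains. Applied to the MV-cycle $M_\delta$ and the moment map $\boldsymbol{\mu}$ for the torus action on the affine Grassmannian, this tells us that $P=\boldsymbol{\mu}(M_\delta)=\conv\{\boldsymbol{\mu}(x)\mid x\in M_\delta^T\}$, where $M_\delta^T$ denotes the set of $T$-fixed points of $M_\delta$. Since $T$-fixed points of the affine Grassmannian are exactly the points $L_\mu$ indexed by coweights $\mu$, and $\boldsymbol{\mu}(L_\mu)=\mu$, the theorem reduces to the statement that the coweights appearing as $T$-fixed points in $M_\delta$ have convex hull equal to $\conv\{wt(\Xi_w(\delta))\mid w\in W\}$.

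For the inclusion $P_\delta\subseteq P$, my plan is to produce, for each $w\in W$, a point of $M_\delta$ whose unique $T$-fixed point in its closure along a suitable one-parameter subgroup is $L_{wt(\Xi_w(\delta))}$. Concretely, I choose a generic regular cocharacter $\chi_w$ in the interior of the Weyl chamber determined by $w$, and consider the Bialynicki-Birula decomposition of $\grass$ with respect to $\chi_w$. The retraction of the affine Tits building centered at the chamber at infinity associated to $\chi_w$, transported through the Bott-Samelson resolution of Section \ref{section_4}, realizes on the dense open BB-cell of $\delta$ exactly the limit $\lim_{t\to 0}\chi_w(t)\cdot p$. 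By the construction in Section \ref{section_5}, this retraction sends a generic gallery in this cell to the combinatorially defined gallery $\Xi_w(\delta)$, whose endpoint is by definition $wt(\Xi_w(\delta))$. Taking the closure in $M_\delta$ produces the required $T$-fixed point.

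For the reverse inclusion $P\subseteq P_\delta$, I would show that every vertex of $P$ arises this way. A vertex of $P$ is the image of a $T$-fixed point maximizing some linear functional, and any such functional can be realized (generically) by a regular cocharacter $\chi_w$ for a unique $w\in W$; by the previous paragraph, the maximum is attained at $L_{wt(\Xi_w(\delta))}$. Hence the vertex set of $P$ is contained in $\{wt(\Xi_w(\delta))\mid w\in W\}$, and taking convex hulls gives the claim. Here one uses that the Bialynicki-Birula stratum through a generic point of the BB-cell of $\delta$ contains a Zariski-dense portion of $M_\delta$, which in turn follows from the irreducibility of $M_\delta$ and the fact that the Bott-Samelson resolution restricted to the cell of $\delta$ is a birational map onto $M_\delta$.

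The principal obstacle will be the identification, carried out in the first inclusion, of the geometric limit under $\chi_w$ with the combinatorial object $\Xi_w(\delta)$. This is precisely the bridge between the algebraic geometry of the Bott-Samelson variety and the combinatorics of retractions of the affine building: one must match, alcove by alcove, the partition of $\delta$ constructed in Section \ref{section_5} with the behavior of the retraction centered at the $w$-chamber at infinity when applied to a generic gallery in the BB-cell. The interaction of $\Xi_w$ with the root operators established in Section \ref{section_5} should provide exactly the inductive handle needed to reduce this verification to the case of minimal or wall-crossing galleries, where the comparison can be made directly.
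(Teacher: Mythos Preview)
Your high-level outline is essentially the paper's own strategy: the MV-polytope is recovered from the GGMS datum $(\mu_w)_{w\in W}$, and the whole point is to show that for a dense set of points in the Bialynicki--Birula cell $C(\delta)$ the retraction $r_{w_0w}$ lands on $\Xi_w(\delta)$, so that $\mu_w=wt(\Xi_w(\delta))$. You have correctly isolated this as the ``principal obstacle''; what you should realise is that this obstacle \emph{is} the theorem --- everything else (moment map convexity, images of semi-infinite cells, BB limits as $T$-fixed points) is formal bookkeeping already contained in Section~\ref{section_3} and Proposition~\ref{retraction_galleries}.

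Where your proposal falls short is in the plan for this central step. You write that ``by the construction in Section~\ref{section_5}, this retraction sends a generic gallery in this cell to $\Xi_w(\delta)$'', and later that the root-operator combinatorics of Section~\ref{section_5} ``should provide exactly the inductive handle needed to reduce this verification to the case of minimal or wall-crossing galleries''. Section~\ref{section_5} does no such thing: it only \emph{defines} $\Xi_w(\delta)$ combinatorially and records the recursion $\Xi_{ws_\alpha}(\delta)=\Xi_{s_{w\alpha}}(\Xi_w(\delta))$ (Remark~\ref{recursive_xi}). The link to retractions is the content of Section~\ref{section_6}, and it is not obtained by reducing to minimal galleries. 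The paper's argument is an explicit coordinate computation inside the open charts $\ku_\delta^w$ of the Bott--Samelson: one starts from the description of $C(\delta)$ in Theorem~\ref{BB_cell}, and for a simple reflection $s_\alpha$ one rewrites a generic element in the chart $\ku_{\Xi_{s_\alpha}(\delta)}^{s_\alpha}$ by systematically moving one-parameter subgroup elements through the gallery. This requires controlling the extra terms produced by Chevalley's commutator formula (Lemmas~\ref{new_terms_1} and~\ref{new_terms_2}, Propositions~\ref{general_2}, \ref{general_1_alt}, \ref{general_2_alt}), and it is here that the section/stable-index analysis of Section~\ref{section_5} enters: it tells you \emph{which} positions of the gallery must be altered, but not \emph{why} the coordinate change can actually be performed. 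The induction on $l(w)$ (Theorem~\ref{finalresult}) then needs a separate argument that genericity of parameters survives each step (Proposition~\ref{independence_non_origin}, Proposition~\ref{origin_parameter_change}). None of this is visible in your sketch; your proposed inductive handle does not engage with the actual difficulty, which is algebraic rather than combinatorial.
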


To obtain a proof that this convex hull is equal to the MV-polytope without utilising the tropical Plücker relations, we calculate the images of the retraction for each chamber of the building at infinity in Section \ref{section_6}. We show that for a dense subset of the MV-cycle the image of the retraction at a chamber at infinity, corresponding to a Weyl group element $w$, is equal to our constructed gallery $\Xi_w(\delta)$. This gives the proof for the above mentioned theorem by using the GGMS-stratum (short for Gelfand-Goresky-MacPherson-Serganova stratum) and results from \cite{GauLit}.

\subsection*{Acknowledgements.} The author would like to thank Pierre Baumann, Stéphane Gaussent, Peter Littelmann, Cristoph Schwer, and Catharina Stroppel for various helpful discussions, remarks, and ongoing good advice. The author would also like to thank the European Research Training Network "LieGrits" for the support and the "Institut de Mathématiques Élie Cartan" at the university Nancy I, Henri Poincaré and especially Stéphane Gaussent for the hospitality during the summer term 2007.

%%%%%%%%%%%%%%%%%%%%%%%%%%%%%%%%%%%%%%%%%%%%%%%%%%%%%%%%%%%%%%%%%%%%%%%%%%%%%%%%%%%%%%%%%%%%%%%%%%%%%%%%%%%%%%%%%%%%%%%%%%%%%%%%%%%%%%%%
%%%%%%%%%%%%%%%%%%%%%%%%%%%%%%%%%%%%%%%%%%%%%%%%%%%%%%%%%%%%%%%%%%%%%%%%%%%%%%%%%%%%%%%%%%%%%%%%%%%%%%%%%%%%%%%%%%%%%%%%%%%%%%%%%%%%%%%%
%%%%%%%%%%%%%%%%%%%%%%%%%%%%%%%%%%%%%%%%%%%%%%%%%%%%%%%%%%%%%%%%%%%%%%%%%%%%%%%%%%%%%%%%%%%%%%%%%%%%%%%%%%%%%%%%%%%%%%%%%%%%%%%%%%%%%%%%

\section{Algebraic and Kac-Moody groups}\label{section_2}
We begin with fixing the notations for our group $G$ and the associated affine Kac-Moody group $\hat{\kl}(G)$. We also recall a number of technical results for calculations in these groups from \cite{Tits1} and \cite[§6]{Stein}.

\subsection*{Notations for the group $G$}

Let $G$ be a complex, simply-connected, semisimple algebraic group. We fix a \textit{Borel subgroup} $B \subset G$ and a \textit{maximal torus} $T \subset B$ and denote by $B^-$ the opposite Borel subgroup of $B$ relative to $T$. We denote the \textit{unipotent radicals} of $B$ and $B^-$ by $U$ and $U^-$. Finally let $W$ be the Weyl group of $G$.

We denote by $X=X^*(T):=\Mor(T,\IC^*)$, respectively $X^\vee=X_*(T):=\Mor(\IC^*,T)$, its \textit{character}, respectively \textit{cocharacter group}. For $\mu \in X^\vee$, we write $\mu:\IC^* \rightarrow T$, $s \mapsto s^\mu$ to simplify the notations in the calculations. Furthermore we write $\Phi$ and $\Phi^\vee=\{\alpha^\vee \mid \alpha \in \Phi \}$ for the \textit{root} and \textit{coroot system}. Corresponding to our choice of $B$ we denote by $\Phi^+$ and $\Phi^-$ the positive and negative roots of $G$ and use the notation $\Phi^\vee_+$ and $\Phi^\vee_-$ for the corresponding subsets of the coroots. By $X_+=\{\lambda \in X \mid \forall \alpha^\vee \in \Phi^\vee_+, \left\langle \lambda, \alpha^\vee \right\rangle \geq 0 \}$ and $X^\vee_+=\{\lambda^\vee \in X^\vee \mid \forall \alpha \in \Phi_+, \left\langle \alpha, \lambda^\vee \right\rangle \geq 0 \}$ we denote the sets of \textit{dominant weights} and \textit{coweights}. 

We fix a numbering of the simple roots $(\alpha_i)_{i \in I}$. Inside $X^+$ we denote by $\Lambda_i$ the \textit{fundamental weight} corresponding to $\alpha_i$ for each $i \in I$. 
We denote by $s_i$ the generator of the Weyl group that acts on $X$ as the reflection along $\alpha_i$. By $l(w)$ for an element $w \in W$ we denote the length of $w$. On $X$, respectively $X^\vee$ we denote by "$\geq$" the usual dominance order.

Following, we fix certain elements inside $G$ and list their commutator relations (see also \cite{Tits1}). For every simple root $\alpha$, we denote by $U_{\alpha}=\{x_{\alpha}(t) \mid t \in \IC \}$ a non-trivial additive 1-parameter subgroup of $U$ such that $s^{\lambda^\vee} x_{\alpha}(t) s^{-\lambda^\vee} = x_{\alpha}(s^{\left\langle \alpha, \lambda^\vee \right\rangle}t)$ holds for all $\lambda^\vee \in X^\vee$, $s \in \IC^*$, and $t \in \IC$. By general theory we know that, for each $i \in I$ there exists a unique morphism $\phi_i:{\rm SL}_2 \rightarrow G$ such that
$$\phi_i \left(
\begin{array}{cc}
1 & t \\
0 & 1
\end{array} \right) = x_{\alpha_i}(t) \text{ and } \phi_i \left(
\begin{array}{cc}
s & 0 \\
0 & s^{-1}
\end{array} \right) = s^{\alpha_i^\vee} $$ for all $s \in \IC^*$ and $t \in \IC$. Furthermore we fix the elements
$$x_{-\alpha_i}(t) = \phi_i \left(
\begin{array}{cc}
1 & 0 \\
t & 1
\end{array} \right) \text{ and } \overline{s_i} = \phi_i \left(
\begin{array}{cc}
0 & 1 \\
-1 & 0
\end{array} \right)$$

For an arbitrary element $w \in W$, we write $\overline{w}$ for the lift in $N_G(T)$ by using the elements $\overline{s_i}$ instead of the $s_i$'s in any reduced expression for $w$, which is well defined as the $\overline{s_i}$ satisfy the braid relations.

For an arbitrary positive root $\alpha$, we choose a simple root $\alpha_i$ and an element $w \in W$ such that $\alpha=w\alpha_i$. We define analogous one-parameter subgroups $U_\alpha$ and $U_{-\alpha}$ for $\alpha$ by
$$ x_\alpha(t):=\overline{w}x_{\alpha_i}(t)\overline{w}^{-1} \text{ and } x_{-\alpha}(t):=\overline{w}x_{-\alpha_i}(t)\overline{w}^{-1} $$
and write $\overline{s_\alpha}$ for the element $\overline{w} \overline{s_i} \overline{w}^{-1}$. For these elements we want to list a number of relations that are satisfied (see also \cite[§3.6]{Tits1} or \cite[§6]{Stein}):
\begin{enumerate}
\item For all $\lambda^\vee \in X^\vee$, a root $\alpha$, $s \in \IC^*$, and $t \in \IC$,
$$ s^\lambda x_\alpha(t) = x_\alpha(s^{\left\langle \alpha, \lambda^\vee \right\rangle}t)s^\lambda. $$
\item For $\alpha \in \Phi$ and $t,t' \in \IC$ such that $1+tt' \neq 0$,
$$x_\alpha(t)x_{-\alpha}(t')=x_{-\alpha}(t'/(1+tt'))(1+tt')^{\alpha^\vee}x_\alpha(t/(1+tt')).$$
\item For a positive root $\alpha$ and $t \in \IC^*$,
$$ x_\alpha(t)x_{-\alpha}(-t^{-1})x_\alpha(t)=x_{-\alpha}(-t^{-1})x_\alpha(t)x_{-\alpha}(-t^{-1})=t^{\alpha^\vee}\overline{s}_\alpha=\overline{s}_\alpha t^{-\alpha^\vee}.$$
\item (Chevalley's commutator formula) If $\alpha$ and $\beta$ are two linearly independent roots, then there are numbers $c_{i,j,\alpha,\beta} \in \{\pm 1,\pm 2,\pm 3 \}$ such that
$$ x_\beta(s)^{-1}x_\alpha(t)^{-1}x_\beta(s)x_\alpha(t) = \prod_{i,j >0}^{\rightarrow} x_{i\alpha + j\beta}(c_{i,j,\alpha,\beta}t^i s^j) $$
for all $s,t \in \IC$. The product is taken over all pairs $i,j \in \IZ^+$ such that $i\alpha + j\beta$ is a root and in order of increasing height of the occurring roots, i.e., with increasing products $\left\langle i\alpha + j\beta, \rho^\vee\right\rangle$, with $\rho^\vee$ being the half-sum over the positive coroots.
\end{enumerate}

Especially the third relation is very important in the later calculations as it will pretty much correspond to the folding of a gallery at a certain face. While the second relation will be important in some independence arguments in the proof of the main result in Section \ref{section_6}.

\begin{remark}
Whenever there is no risk of confusion, we write write $w$, respectively $s_i$, instead of $\overline{w}$, respectively $\overline{s_i}$, for representatives of the Weyl group elements in $N_G(T)$.
\end{remark}

We write $\ko$ for the ring of formal power series $\IC [[t]]$ and $\kk$ for its field of fractions $\IC ((t))$. We denote by $G(\ko)$ and $G(\kk)$, the sets of $\ko$-valued and $\kk$-valued points of $G$. We will not explicitly use the affine Kac-Moody group $\hat{\kl}(G)$ and hence refer to \cite[§13]{Kum} or \cite{GauLit} for the definition and its relation with the Bott-Samelson variety.
We denote by $W^\mathfrak{a} = W \ltimes X^\vee$ the affine Weyl group.

We denote by $ev : G(\ko) \rightarrow G$ and $ev_\kl:\kl(G(\ko)) \rightarrow \IC^* \times G$ the evaluation maps at $t=0$. We define the corresponding \textit{Iwahori subgroup} as the preimages of the Borel subgroup $\ki=ev^{-1}(B)$.

It is known that by this construction of $\hat{\kl}(G)$ we obtain a new simple root $\alpha_0$ and a corresponding reflection $s_0 \in W^\mathfrak{a}$. All notations and computation rules in the previous part apply to this root as well. Although we will not define any operators for these affine roots, they still occur in the description of elements in the Bott-Samelson variety as we will see later.

As in \cite[§5.1]{BauGau} we will identify the real affine roots with $\Phi \times \IZ$, where we identify $\alpha \in \Phi$ with $(\alpha,0)$ and $\alpha_0$ with $(-\theta,-1)$, where $\theta$ is the highest root in the classical root system. To each pair $(\alpha,n)$ we associate a reflection of $X^\vee \otimes \IR$ as $s_{\alpha,n}(x):=x-(\left\langle \alpha, x \right\rangle - n) \alpha^\vee$.

Via these reflections we can define an action of the affine Weyl group on $X^\vee \otimes \IR$. This allows us to define an action of it on the set of real affine roots, via $(\tau_\lambda,w)(\alpha,n):=(w\alpha, n + \left\langle \alpha, \lambda \right\rangle)$, for $\tau_\lambda$ the translation with respect to an element $\lambda \in X^\vee$ and $w \in W$. These two actions are compatible with each other, meaning that if we apply an element $w$ of the affine Weyl group to a reflection hyperplane $H_{\alpha,n}$ we obtain the reflection hyperplane corresponding to $w.(\alpha,n)$.

We denote the element $\alpha_0 - \theta$ by $\underline{\delta}$. In the above notations this means that $\underline{\delta}$ corresponds to $(0,-1)$ and thus any real affine root can be written, in a unique way, in the form $\alpha + n \underline{\delta}$, for $\alpha \in \Phi$ and $n \in \IZ$.

To the root $(\alpha,n)$ (or $\alpha - n\underline{\delta}$) we associate the one-parameter subgroup 
$$ U_{\alpha,n}= \{ x_{\alpha}(at^n) \mid a \in \IC \}. $$

%%%%%%%%%%%%%%%%%%%%%%%%%%%%%%%%%%%%%%%%%%%%%%%%%%%%%%%%%%%%%%%%%%%%%%%%%%%%%%%%%%%%%%%%%%%%%%%%%%%%%%%%%%%%%%%%%%%%%%%%%%%%%%%%%%%%%%%%
%%%%%%%%%%%%%%%%%%%%%%%%%%%%%%%%%%%%%%%%%%%%%%%%%%%%%%%%%%%%%%%%%%%%%%%%%%%%%%%%%%%%%%%%%%%%%%%%%%%%%%%%%%%%%%%%%%%%%%%%%%%%%%%%%%%%%%%%
%%%%%%%%%%%%%%%%%%%%%%%%%%%%%%%%%%%%%%%%%%%%%%%%%%%%%%%%%%%%%%%%%%%%%%%%%%%%%%%%%%%%%%%%%%%%%%%%%%%%%%%%%%%%%%%%%%%%%%%%%%%%%%%%%%%%%%%%

\section{Grassmannian and MV-cycles}\label{section_3}
\subsection*{The affine Grassmannian}

We denote by $\grass_G:=G(\kk)/G(\ko)$ the \textit{affine Grassmannian} associated to $G$. If no confusion can arise we will usually denote $\grass_G$ by $\grass$.

Inside the affine Grassmannian we are interested in two different types of orbits. Obviously $G(\ko)$ acts on $\grass$ by left multiplication, it does so with finite dimensional orbits, and one can easily index these orbits. Let $\lambda \in X^\vee_+$, by definition we can view $\lambda$ as an element in $G(\kk)$ and denote its image in $\grass$ by $L_\lambda$. We denote by $\grass_\lambda=G(\ko).L_\lambda$ the corresponding $G(\ko)$-orbit. It suffices to use $\lambda \in X^\vee_+$ as the orbit is closed under the Weyl group action. The closure $\overline{\grass_\lambda}$ is called a \textit{generalized Schubert variety}.
 
The second type of orbit we are interested in are the semi-infinite orbits $S_\nu^w$ for $\nu \in X^\vee$ and $w \in W$. These are defined as $S_\nu^w=wU^-(\kk)w^{-1}. L_\nu$ and we usually denote $S_\nu^{\rm id}$ by $S_\nu$.

For both types of orbits there are well known closure relations, see \cite{MirVil}.

\subsection*{MV-cycles and MV-polytopes}

We are interested in the intersections of the above described orbits. The next definition is based on \cite{MirVil}.

\begin{definition}\textbf{(\cite[§5.3, Def. 2]{And} and \cite[2.2]{Kam2})}
Let $\lambda \in X^\vee_+$ and $\mu \in X^\vee$. If the intersection $\grass_\lambda \cap S_\mu$ is not empty we call the irreducible components of $\overline{\grass_\lambda \cap S_\mu}$ the \textit{MV-cycles of coweight} $(\lambda,\mu)$.
\end{definition}

By work of Mirkovi{\'c} and Vilonen, it is known that the collection of all MV-cycles of coweights $(\lambda,\nu)$ for $\nu \in X^\vee$, form a natural basis of the irreducible representation $V(\lambda)$ for $G^\vee$ of highest weight $\lambda$, where $G^\vee$ is the Langlands dual group of $G$. This is done in such a way that the MV-cycles of coweight $(\lambda,\nu)$ span the weight space for the coweight $\nu$.

\begin{definition}\textbf{(\cite[§6, Prop 4]{And} and \cite[2.2]{Kam2})}
Let $A$ be an MV-cycle of coweight $(\lambda,\mu)$ then we define its corresponding \textit{MV-polytope} $P(A)$ as the convex hull of the set $\{\mu \in X^\vee \mid L_\mu \in A \}$ inside $X^\vee \otimes \IR$. We call polytopes in $X^\vee \otimes \IR$ arising in this way \textit{MV-polytopes}.
\end{definition}

As each MV-cycle is a $T$-invariant closed subvariety of the affine Grassmannian, it was shown by Anderson (\cite{And}) that this convex hull is the image of $A$ under the moment map $\boldsymbol{\mu}$ of the affine Grassmannian with respect to the $T$-action. This is done by embedding the affine Grassmannian into an infinite dimensional projective space $\mathbb{P}(V)$ with $V$ being the highest weight representation of the affine Kac Moody group $\hat{\kl}(G)$ with highest weight $\Lambda_0$, the fundamental weight corresponding to $\alpha_0$. This representation decomposes into weight spaces and thus we write $v=\sum_{\nu \in X}v_\nu$, for $v \in V$ and define
$$ \boldsymbol{\mu}([v]) = \sum_{\nu \in X} \frac{| v_\nu|^2}{|v|^2}\nu,$$ 
see \cite[§6]{And}.

In addition to the images of the MV-cycles under the moment map, one also needs to know the images of the semi-infinite cells. Due to their closure relations, \cite{MirVil}, it follows that
$$\boldsymbol{\mu}(\overline{S_\mu^w})=C_\mu^w:=\{p \in X^\vee \otimes \IR \mid \left\langle p, w \cdot \Lambda_i \right\rangle \geq \left\langle \mu, w \cdot \Lambda_i \right\rangle \text{ for all }i \},$$
where the $\Lambda_i$'s are the fundamental coweights, \cite[2.2]{Kam2}.

It was shown by Kamnitzer that MV-polytopes can be written as the intersection of cones of the form $C_{\mu_w}^w$ for $w \in W$. Since the MV-polytopes are the images of MV-cycles under the moment map, one can try to lift this description into the Grassmannian. This leads to the following description via GGMS-strata.

\begin{definition}\textbf{(\cite[§2.4]{Kam2})}
Let $\mu_\bullet =(\mu_w)_{w \in W}$ be a set of coweights, such that $\mu_v \geq_w \mu_w$ for all $v$, $w \in W$. The corresponding \textit{Gelfand-Goresky-MacPherson-Serganova} (or short GGMS) strata on the affine Grassmannian is
$$ A(\mu_\bullet) := \bigcap_{w \in W} S^w_{\mu_w}. $$
\end{definition}

The assumed inequalities for the coweights are exactly those that one needs to demand for the intersection to be non-empty. We will later go into more detail, why the GGMS-strata is of such great importance for the gallery model. It is a quite easy calculation to show that indeed this definition produces the right variety and that one has
$$ \boldsymbol{\mu}(\overline{A(\mu_\bullet)})=P(\mu_\bullet), $$
for a set of coweights satisifying the needed inequalities, see \cite[2.5]{Kam2}.

%%%%%%%%%%%%%%%%%%%%%%%%%%%%%%%%%%%%%%%%%%%%%%%%%%%%%%%%%%%%%%%%%%%%%%%%%%%%%%%%%%%%%%%%%%%%%%%%%%%%%%%%%%%%%%%%%%%%%%%%%%%%%%%%%%%%%%%%
%%%%%%%%%%%%%%%%%%%%%%%%%%%%%%%%%%%%%%%%%%%%%%%%%%%%%%%%%%%%%%%%%%%%%%%%%%%%%%%%%%%%%%%%%%%%%%%%%%%%%%%%%%%%%%%%%%%%%%%%%%%%%%%%%%%%%%%%
%%%%%%%%%%%%%%%%%%%%%%%%%%%%%%%%%%%%%%%%%%%%%%%%%%%%%%%%%%%%%%%%%%%%%%%%%%%%%%%%%%%%%%%%%%%%%%%%%%%%%%%%%%%%%%%%%%%%%%%%%%%%%%%%%%%%%%%%

\section{Galleries}\label{section_4}
Most of the following definitions and constructions follow \cite{GauLit} with a small amount of modifications at some places and simplifications at others. In addition we also want to recall the definition of the Bott-Samelson variety as it is used in both \cite{Kum} and \cite{GauLit}. For some of the original statements, we refer to \cite{Dem} and \cite{Han}.

\subsection*{Combinatorial galleries}

We will now sum up and recall many of the definition from \cite{GauLit} concerning combinatorial galleries. Starting with the most basic sets.

\begin{definition}
We denote by $\ka:=X^\vee \otimes_\IZ \IR$ the real span of the coweight lattice, called the \textit{fundamental apartment}, and by $H^\mathfrak{a}= \{ H_{\alpha,m} \mid \alpha \in \Phi^+, m \in \IZ \} \subset \ka$ the set of affine reflection hyperplanes for the action of $W^\mathfrak{a}$ on $\ka$, where \break $H_{\alpha,m}=\{x \in \ka \mid \left\langle x,\alpha \right\rangle = m \}$. The corresponding reflection for the hyperplane $H_{\alpha,m}$ will be denoted by $s_{\alpha,m}$ and by $H_{\alpha,m}^+$ and $H_{\alpha,m}^-$ we denote the two corresponding (closed) half-spaces.

A connected component of $\ka \setminus H^\mathfrak{a}$ is called \textit{open alcove} and its closure \textit{(closed) alcove}.

The \textit{fundamental alcove} is the subset $\Delta_f:=\{x \in \ka \mid 0 \leq \left\langle x, \beta \right\rangle \leq 1 \ \forall \beta \in \Phi^+ \}$.

A \textit{face} $F$ is a subset of $\ka$ of the form
$$ \bigcap_{(\beta,m) \in \Phi^+ \times \IZ} H_{\beta,m}^{\epsilon_{\beta,m}}, $$
with $\epsilon_{\beta,m} \in \{+,-,\emptyset \}$ and $H_{\beta,m}^\emptyset = H_{\beta,m}$.

We define $S^\mathfrak{a}=\{s_{\beta,m} \mid H_{\beta,m} \text{ contains a face of } \Delta_f \}$ the set of affine reflections that generates the affine Weyl group $W^\mathfrak{a}$.

A connected component of $\ka \setminus \bigcup_{\beta \in \Phi^+} H_{\beta,0}$ is called an \textit{open chamber} and its closure is called a \textit{(closed) chamber}.

We denote by $\mathfrak{C}_f$ the \textit{dominant chamber}, which is the chamber that contains $\Delta_f$ and by $\mathfrak{C}_{-f}$ the \textit{anti-dominant chamber}, which is $w_0\mathfrak{C}_f$. All chambers are in the $W$-orbit of $\mathfrak{C}_f$. 

We say that an alcove $\Delta$ can be seperated from a chamber $\mathfrak C$ by a reflection hyperplane $H \in H^\mathfrak a$, if there exists a translation $\tau \in W^\mathfrak a$ such that $\Delta$ and $\tau \mathfrak C$ lie on different sides of the hyperplane $H$. 
\end{definition}

An important notion will be the type of a face.

\begin{definition}
For a face $F$ of $\Delta_f$ we define 
$$ S^\mathfrak{a}(F)=\{s_{\beta,m} \in S^\mathfrak{a} \mid F \subset H_{\beta,m} \}$$
and call this the \textit{type} of $F$. This means that 
$$S^\mathfrak{a}(0)=S=\{s_{\beta,0} \mid H_{\beta,0} \text{ is a wall for } \Delta_f \}$$
and $S^\mathfrak{a} (\Delta_f) = \emptyset$. One defines the type of an arbitrary face $F'$ as the type of the unique face of $\Delta_f$ that lies in the $W^\mathfrak{a}$-orbit of $F'$. Hence the type of an alcove if always trivial, while the type of a codimension 1 face always consists of exactly one element of $S^\mathfrak a$.
\end{definition}

Using the defined sets in $\ka$ and the type we can define the notion of a gallery.

\begin{definition}
A \textit{combinatorial gallery joining $0$ with $\mu$} in $\ka$ is a sequence
$$\gamma = ( F_f = \Gamma_0' \subset \Gamma_0 \supset \Gamma_1' \subset \ldots \supset \Gamma_p' \subset \Gamma_p \supset F_\mu ),$$
such that 
\begin{itemize}
\item $F_f$ is the face of type $S$ in $\Delta_f$,
\item $F_\lambda$ is the face corresponding to $\mu$, via the embedding $X^\vee \rightarrow \ka$,
\item the $\Gamma_j$'s are alcoves,
\item each $\Gamma_j'$, for $j \in \{1, \ldots , p \}$, is a face of $\Gamma_{j-1}$ and $\Gamma_j$, of relative dimension one.
\end{itemize}

Let $\mu \in X^\vee$ and $\gamma$ be a combinatorial gallery joining $0$ and $\mu$
$$\gamma = ( F_f=\Gamma_0' \subset \Gamma_0 \supset \Gamma_1' \subset \ldots \supset \Gamma_p' \subset \Gamma_p \supset F_\mu ).$$
Then the \textit{type} of $\gamma$ is the list of the types of all the small faces of the gallery $\gamma$:
$\type(\gamma)=(t_1, \ldots, t_p)$, 
where $t_j$ is the type of the face $\Gamma_j'$ for all $0 \leq j \leq p$.

Let $\lambda \in X^\vee_+$ and a minimal gallery $\gamma_\lambda$ joining $0$ with $\lambda$, i.e., a gallery such that there exists no gallery consisting of fewer alcoves connecting $0$ and $\lambda$. Then we denote by $\Gamma(\gamma_\lambda)$ the set of all combinatorial galleries starting at the origin, with the same type as $\type(\gamma_\lambda)$.
\end{definition}

\begin{remark}
If the type of a combinatorial gallery $\gamma$ is fixed, we also use the notation $\gamma = [\gamma_0,\ldots, \gamma_p]$ with $\gamma_i \in t_i \cup \{id\}$ and $\Gamma_i = \gamma_0 \cdots \gamma_i \Delta_f$.
\end{remark}

Since we are interested in a certain type of gallery, we need the following notions for the faces.

\begin{definition}
Let $\delta = ( F_f = \Delta_0' \subset \Delta_0 \supset \Delta_1' \subset \ldots \supset \Delta_p' \subset \Delta_p \supset F_\mu)$ be in $\Gamma^+(\gamma_\lambda)$. For $j=0,\ldots,p$ let $\kh_j$ be the set of all affine reflection hyperplanes $H$ in $H^\mathfrak a$ such that $\Delta_j' \subset H$. In all cases except $j=0$ this set will consist of a single hyperplane and for $j=0$ it will consist of all classical reflection hyperplanes. We say that an affine hyperplane $H$ is a \textit{load-bearing} wall for $\delta$ at $\Delta_j$ if $H \in \kh_j$ and $H$ separates $\Delta_j$ from $\mathfrak C_{-f}$.

The set of all load-bearing walls of a combinatorial gallery $\delta$, is denoted by $J_{-\infty}(\delta)$. This set can be divided into two subsets $J_{-\infty}^+(\delta)$ and $J_{-\infty}^-(\delta)$. An index $j \in J_{-\infty}(\delta)$ is in $J_{-\infty}^+(\delta)$ if $\Gamma_{j-1}$ is not separated from $\mathfrak C_{-f}$, otherwise $j \in J_{-\infty}^-(\delta)$.

For $\alpha \in \Phi$ simple, define $\IW_\alpha(\delta) = \{ j \in \{0, \ldots, p \} \mid \Gamma_j' \subset H_{\alpha,n} \text{ for some } n \in \IZ \}$ (or $\IW_\alpha$ if no confusion can arise).
\end{definition}

There are a number of different subsets of combinatorial galleries that will be important in the succeeding parts, the first of these are the positively or negatively folded galleries.

\begin{definition} \label{positive_gallery}
The gallery $\delta \in \Gamma(\gamma_\lambda)$ is called \textit{positively folded} at $\Delta_j'$ if $\Delta_{j-1} = \Delta_j$ and the half-space, which contains $\Delta_j$ and corresponds to the reflection hyperplane containing $\Delta_j'$, can be separated from the anti-dominant chamber $\mathfrak C_{-f}$.

We say that the gallery is positively folded if all foldings are positive.

The set of positively folded galleries of type $\type(\gamma_\lambda)$ will be denoted by $\Gamma^+(\gamma_\lambda)$.

A \textit{negative folding} is defined in the analogous way with $\mathfrak C_f$ instead of $\mathfrak C_{-f}$. The definitions of a negatively folded gallery and the set $\Gamma^-(\gamma_\lambda)$ are parallel as well.
\end{definition}

\begin{remark}
If a gallery $\delta$ is positively folded at $\Delta_j' \subset H$ then $H$ is by definition a load-bearing wall and in $J_{-\infty}^-(\delta)$.
\end{remark}

For our purposes we not only need positively folded galleries, but also those of maximal dimension.

\begin{definition} \textbf{(\cite[§5.2]{BauGau} and \cite[§5, Def. 15]{GauLit})} \label{LS_gallery}
The \textit{dimension of a gallery} $\gamma \in \Gamma^+(\gamma_\lambda)$ is defined as:
$${\rm dim} \delta = \# \{ (H,\Delta_j) \mid H \text{ is a load-bearing wall for $\delta$ at $\Delta_j$} \}.$$

A positively folded gallery $\delta$ of type $t_(\gamma_\lambda)$ joining the origin with $\nu$ is called an \textit{LS-gallery of type $t_{\gamma_\lambda}$} if ${\rm dim} \gamma = \left\langle \lambda + \nu,\rho \right\rangle + {\rm dim}(P_\lambda / B)$, where $P_\lambda$ is the standard parabolic subgroup $P_J$ for $J=\{j \in I \mid \left\langle \alpha_j, \lambda \right\rangle =0 \}$. The set of all LS galleries joining the origin and $\nu$ of type $t_{\gamma_\lambda}$ is denoted by $\Gamma^+_{LS}(\gamma_\lambda)$.
\end{definition}

\begin{remark}
By \cite{GauLit}, LS-galleries are the galleries of maximal dimension, as this is always bounded by $\left\langle \lambda + \nu,\rho \right\rangle$.
\end{remark}

Replacing $\mathfrak C_{-f}$ by $w \mathfrak C_{-f}$ in the definitions \ref{positive_gallery} and \ref{LS_gallery}, we obtain:

$\Gamma^w(\gamma_\lambda)$: combinatorial galleries of type $\type(\gamma_\lambda)$, positively folded with respect to $w \mathfrak C_{-f}$.

$\Gamma^w_{LS}(\gamma_\lambda)$: combinatorial galleries of type $\type(\gamma_\lambda)$, LS with respect to $w \mathfrak C_{-f}$.

$\Gamma^w(\gamma_\lambda,\nu)$: combinatorial galleries of type $\type(\gamma_\lambda)$, positively folded with respect to $w \mathfrak C_{-f}$ joining the origin and $\nu$.

$\Gamma^w_{LS}(\gamma_\lambda,\nu)$: combinatorial galleries of type $\type(\gamma_\lambda)$, LS with respect to $w \mathfrak C_{-f}$ joining the origin and $\nu$.

\subsection*{Root operators}

For the general notions of a crystal we refer to \cite{Kash2}. Most of the definitions we give can be modified for the path model and most of the combinatorics we deal with, in Section $\ref{section_5}$, work with path as well. For more details about the path-model we refer to \cite{Lit3}, \cite{Lit4}, \cite{Lit6}, \cite{Lit2}, and \cite{Lit5}.

Let $\alpha$ be a simple root, $\lambda$ a dominant coweight, and $\nu \prec \lambda$ an arbitrary coweight. We fix $\delta \in \Gamma^+_{LS}(\gamma_\lambda, \nu)$ for a given type $\gamma_\lambda$ as
$$\delta = ( F_f=\Delta_0' \subset \Delta_0 \supset \Delta_1' \subset \ldots \supset \Delta_p' \subset \Delta_p \supset F_\mu ) = [\delta_0, \ldots, \delta_p].$$

\renewcommand{\labelenumi}{\Roman{enumi})}

\begin{definition} \textbf{(\cite[§6, Def. 16]{GauLit})}

Let $\delta \in \Gamma^+_{LS}(\gamma_\lambda)$ and ${\boldsymbol{\mathfrak m}_\alpha^\delta}$ be minimal with the property that a face $\Delta_k'$ is contained in $H_{\alpha,{\boldsymbol{\mathfrak m}_\alpha^\delta}}$.
\begin{enumerate}
\item If ${\boldsymbol{\mathfrak m}_\alpha^\delta} \leq -1$. Let ${\boldsymbol{\mathfrak t}_\alpha^\delta}$ be minimal such that $\Delta_{\boldsymbol{\mathfrak t}_\alpha^\delta}' \subset H_{\alpha,{\boldsymbol{\mathfrak m}_\alpha^\delta}}$ and fix $0 \leq {\boldsymbol{\mathfrak s}_\alpha^\delta} \leq {\boldsymbol{\mathfrak t}_\alpha^\delta}$ maximal such that $\Delta_{\boldsymbol{\mathfrak s}_\alpha^\delta}' \subset H_{\alpha,{\boldsymbol{\mathfrak m}_\alpha^\delta}+1}$.

Then $e_\alpha \delta$ is defined by
$$e_\alpha \delta = ( F_f=\widetilde{\Delta_0'} \subset \widetilde{\Delta_0} \supset \widetilde{\Delta_1'} \subset \ldots \supset \widetilde{\Delta_p'} \subset \widetilde{\Delta_p} \supset F_{\mu + \alpha^\vee} ),$$
where $\widetilde{\Delta_i} = \left\{ \begin{array}{ll} \Delta_i & \text{for } i < {\boldsymbol{\mathfrak s}_\alpha^\delta} \\ s_{\alpha,{\boldsymbol{\mathfrak m}_\alpha^\delta}+1}(\Delta_i) & \text{for } {\boldsymbol{\mathfrak s}_\alpha^\delta} \leq i < {\boldsymbol{\mathfrak t}_\alpha^\delta} \\ t_{\alpha^\vee}(\Delta_i) & \text{for } i \geq {\boldsymbol{\mathfrak t}_\alpha^\delta}. \end{array} \right. $ \\
and $t_{\alpha^\vee}$ is the translation by $\alpha^\vee$.

\item If $\left\langle \nu, \alpha \right\rangle - {\boldsymbol{\mathfrak m}_\alpha^\delta} \geq 1$. Let ${\boldsymbol{\mathfrak j}_\alpha^\delta}$ be maximal such that $\Delta_{\boldsymbol{\mathfrak j}_\alpha^\delta}' \subset H_{\alpha,{\boldsymbol{\mathfrak m}_\alpha^\delta}}$ and fix ${\boldsymbol{\mathfrak j}_\alpha^\delta} \leq {\boldsymbol{\mathfrak k}_\alpha^\delta} \leq p+1$ minimal such that $\Delta_{\boldsymbol{\mathfrak k}_\alpha^\delta}' \subset H_{\alpha,{\boldsymbol{\mathfrak m}_\alpha^\delta}+1}$.

Then $f_\alpha \delta$ is defined by
$$f_\alpha \delta = ( F_f=\widetilde{\Delta_0'} \subset \widetilde{\Delta_0} \supset \widetilde{\Delta_1'} \subset \ldots \supset \widetilde{\Delta_p'} \subset \widetilde{\Delta_p} \supset F_{\mu - \alpha^\vee} ),$$
where 
$\widetilde{\Delta_i} = \left\{ \begin{array}{ll} \Delta_i & \text{for } i < {\boldsymbol{\mathfrak j}_\alpha^\delta} \\ 
s_{\alpha,{\boldsymbol{\mathfrak m}_\alpha^\delta}}(\Delta_i) & \text{for } {\boldsymbol{\mathfrak j}_\alpha^\delta} \leq i \leq {\boldsymbol{\mathfrak k}_\alpha^\delta}-1 \\
t_{-\alpha^\vee}(\Delta_i) & \text{for } i \geq {\boldsymbol{\mathfrak k}_\alpha^\delta}. \end{array} \right.$ .
\end{enumerate}
\end{definition}

\renewcommand{\labelenumi}{(\roman{enumi})}

\begin{remark}
If the operators $e_\alpha$ or $f_\alpha$ are defined, their images are again LS-galleries, \cite{GauLit}.
\end{remark}

\begin{notation}
We will use the notation ${\boldsymbol{\mathfrak m}_\alpha^\gamma}$, ${\boldsymbol{\mathfrak j}_\alpha^\gamma}$, ${\boldsymbol{\mathfrak k}_\alpha^\gamma}$, ${\boldsymbol{\mathfrak s}_\alpha^\gamma}$, and ${\boldsymbol{\mathfrak t}_\alpha^\gamma}$ throughout the proceeding sections.
\end{notation}

\begin{definition}
We fix the following
\begin{itemize}
\item $wt(\delta) = \mu$,
\item $\epsilon_\alpha(\delta) = {\rm max}_m \{ e_\alpha^m(\delta) \text{ is defined} \}$,
\item $\varphi_\alpha(\delta) = {\rm max}_m \{ f_\alpha^m(\delta) \text{ is defined} \}$,
\item $e_\alpha^{\max}(\delta) = e_\alpha^{\varepsilon_\alpha(\delta)}(\delta),$
\item $f_\alpha^{\max}(\delta) = f_\alpha^{\varphi_\alpha(\delta)}(\delta).$
\end{itemize}
\end{definition}

For more details about the properties of these root operators, we refer to \cite{GauLit}, especially §6, Corollary 1, Lemma 5, and Theorem 2.

\subsection*{Bott-Samelson resolution}

The two facts that on the one hand we have restricted ourselves to galleries of alcoves and on the other hand that the Kac-Moody group is affine simplify our definition of the Bott-Samelson variety, see \cite[§ 5.2]{BauGau}.

\begin{definition}\textbf{(\cite[§3.3, Example 3]{GauLit})}
Let $F$ be a face of the fundamental alcove and let us denote by $W^\mathfrak{a}(F)$ the subgroup of $W^\mathfrak{a}$ generated by its type $S^\mathfrak{a}(F)$, then we define the \textit{standard parahoric subgroup of type $F$} as
$$P_F := \bigcup_{w \in W^\mathfrak{a}(F)} \ki w \ki. $$
\end{definition}

\begin{definition}\textbf{(\cite[§5.2]{BauGau})}
For $0 \leq j \leq p$, let us denote by $\kp_j$ the parahoric subgroup of $G(\kk)$ of type $t_j$, containing $\ki$. The \textit{Bott-Samelson variety} $\hat{\Sigma}(\gamma_\lambda)$ is defined as
$$ \hat{\Sigma}(\gamma_\lambda) = G(\ko) \times_{\ki} \kp_1 \times_{\ki} \ldots \times_{\ki} \kp_p / \ki. $$
In other words it is defined as the quotient of the group $G(\ko) \times \kp_1 \times \ldots \times \kp_p$ by the subgroup $\ki \times \ki \times \ldots \times \ki$, the $p+1$st power of $\ki$ under the right action given by $g.q=(g_0 q_0,q_0^{-1}q_1q_1,\ldots,q_{p-1}^{-1}g_pq_p)$ where $q=(q_0,\ldots,q_p) \in G(\ko) \times \kp_1 \times \ldots \times \kp_p$ and $g=(g_1,\ldots,g_p) \in \ki \times \ki \times \ldots \times \ki$.
\end{definition}

\begin{remark}\label{BS_gall}
By a result of Contou-Carrère, \cite[I, §6]{Con}, and a generalization of it, \cite[§7, Def.-Prop. 1]{GauLit}, one can view the Bott-Samelson variety as the set of galleries of type $\gamma_\lambda$ in the affine building $\kj^\mathfrak a$.
\end{remark}

The Bott-Samelson is a resolution for the Schubert cells in the affine Grassmannian. It is a smooth projective variety of dimension ${\rm dim}(\hat{\Sigma}(\gamma_\lambda))=2 \left\langle \lambda, \rho \right\rangle$. We will write a point in this variety as $g=[g_0,\ldots,g_p]$ and will call it a gallery. By the above Remark \ref{BS_gall} this naming is reasonable. The natural multiplication map then provides us with a $T$-equivariant map
\begin{eqnarray*}
\pi:\hat{\Sigma}(\gamma_\lambda) & \rightarrow & X_\lambda = \overline{\grass_\lambda}\\
g & \mapsto & g_0g_1 \cdots g_p
\end{eqnarray*}
that is birational and proper.

The set of combinatorial galleries of type $t_{\gamma_\lambda}$ starting at the origin are the $T$-fixed points under the natural torus action. In addition we also have the action of $G(\ko)$ on the variety, this comes from the action on the building, as $G(\ko)$ acts by simplical maps. This action preserves the type of a gallery, and if a gallery is minimal, then so are all the galleries in the $G(\ko)$-orbit.

If we view the Bott-Samelson variety as the set of galleries, the map $\pi:\hat{\Sigma}(\gamma_\lambda) \rightarrow X_\lambda$ is the restriction of the projection on the last factor. This means that the minimal gallery $\gamma_\lambda$ is mapped to $L_\lambda$, hence the map $\pi$ induces a morphism between the $G(\ko)$-orbit of $\gamma_\lambda$ and $\grass_\lambda$. As all minimal galleries lie in the same $G(\ko)$-orbit, see \cite[§ 7]{GauLit}, this induces a bijection between these and the open orbit $\grass_\lambda$ in $X_\lambda$ via $\pi$.

As the retractions were defined for alcoves, they are naturally extended to galleries by applying them to every alcove of the gallery. Since the retractions preserve the type of a face and hence of a gallery, the image of an arbitrary gallery of type $t_{\gamma_\lambda}$ is a combinatorial gallery of the same type. By using this fact one has the following property.

\begin{proposition}\textbf{(\cite[§7, Prop. 5]{GauLit})}\label{BB-cells}
The retraction $r_w$ with centre $w$ at infinity induces a map $\hat{r}_w:\hat{\Sigma}(\gamma_\lambda) \rightarrow \Gamma(\gamma_\lambda)$. The fibres $C_w(\delta)=\hat{r}_w^{-1}(\delta)$, $\delta \in \Gamma(\gamma_\lambda)$, are endowed with the structure of a locally closed subvariety, which are isomorphic to some affine spaces.

The $C_w(\delta)$ are the Bialynicki-Birula cells $\{x \in \hat{\Sigma}(\gamma_\lambda) \mid \limes_{s \rightarrow 0} s^\eta.x = \delta \}$ of centre $\delta$ in $\hat{\Sigma}(\gamma_\lambda)$, associated to a generic one-parameter subgroup $\eta$ of $T$ in the Weyl chamber $w\mathfrak{C}_f$.
\end{proposition}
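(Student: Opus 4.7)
The plan is to prove the three assertions in turn — $\hat{r}_w$ is well-defined with image in $\Gamma(\gamma_\lambda)$, each fibre $C_w(\delta)$ is isomorphic to an affine space, and these fibres are the Bialynicki-Birula cells for a generic one-parameter subgroup in $w\mathfrak{C}_f$ — working throughout with the description of $\hat{\Sigma}(\gamma_\lambda)$ as the set of galleries of type $\type(\gamma_\lambda)$ in the affine building $\kj^{\mathfrak a}$ from Remark \ref{BS_gall}.

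First I would recall that the retraction $r_w$ is the unique type-preserving simplicial map $\kj^{\mathfrak a} \to \ka$ that fixes pointwise a sector representing the chamber $w$ at infinity and contracts every apartment through that sector onto $\ka$. Applied alcove-by-alcove to $g = [g_0, \ldots, g_p]$ it produces a sequence of alcoves in $\ka$ starting at $\Delta_f$; since $r_w$ preserves incidence and types of faces, this is a combinatorial gallery of type $\type(\gamma_\lambda)$, so $\hat{r}_w(g) \in \Gamma(\gamma_\lambda)$. Well-definedness on the quotient defining $\hat{\Sigma}(\gamma_\lambda)$ is automatic since $\ki$ acts on the building by type-preserving automorphisms fixing the fundamental alcove.

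For the fibre structure, I would fix $\delta = [\delta_0, \ldots, \delta_p]$ and construct an explicit parametrisation of $C_w(\delta)$. At each index $j$ the small face of $\delta$ lies in an affine reflection hyperplane $H_{\beta_j, m_j}$, so $(\beta_j, m_j)$ is a real affine root; a lift $g$ of $\delta$ under $\hat{r}_w$ is obtained by inserting, at step $j$, an element $x_{\beta_j}(a_j t^{m_j}) \in U_{\beta_j, m_j}$ whose root direction is precisely the one contracted to the identity by $r_w$, i.e.\ the one lying on the $w\mathfrak{C}_f$-positive side of $H_{\beta_j, m_j}$. Using the Iwahori factorisation of each $\kp_j$ together with Chevalley's commutator formula to disentangle the overlapping parameters as one multiplies through the sequence, the assignment $(a_0, \ldots, a_p) \mapsto g$ gives a $T$-equivariant isomorphism from an affine space $\IC^{N(\delta)}$ onto $C_w(\delta)$ as a locally closed subvariety of $\hat{\Sigma}(\gamma_\lambda)$.

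For the last assertion, pick $\eta \in X^\vee$ generic in $w\mathfrak{C}_f$. The conjugation rule $s^\eta x_{\beta_j}(a_j t^{m_j}) s^{-\eta} = x_{\beta_j}\!\left(s^{\langle \beta_j, \eta \rangle} a_j t^{m_j}\right)$, combined with the strict positivity of $\langle \beta_j, \eta \rangle$ built into our choice of root direction, shows $\lim_{s \to 0} s^\eta . g = \delta$ for every $g \in C_w(\delta)$. Since the $T$-fixed points of $\hat{\Sigma}(\gamma_\lambda)$ are exactly the combinatorial galleries, and both the retraction fibres and the BB cells partition the variety with the same fixed point set, the two decompositions coincide. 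The main obstacle is the bookkeeping in the middle step: at each index $j$ one must correctly identify the $w$-positive side of $H_{\beta_j, m_j}$ (which depends on $w$, not on the identity), track the signs of the affine roots through Chevalley's formula, and then argue that the resulting map is genuinely an isomorphism of varieties and not merely a set-theoretic bijection — this is where the Iwahori factorisation and the inductive realisation of $\hat{\Sigma}(\gamma_\lambda)$ as a tower of $\mathbb{P}^1$-bundles become essential.
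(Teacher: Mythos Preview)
The paper does not actually prove this proposition: it is quoted verbatim from \cite[\S7, Prop.~5]{GauLit}, and the only argument supplied here is the one-sentence remark immediately following the statement, namely that replacing $w_0$ by an arbitrary $w \in W$ amounts to a different choice of Borel subgroup and positive roots, so the original proof carries over unchanged. There is therefore no ``paper's own proof'' to compare against beyond that remark.

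Your sketch is a reasonable outline of how the proof in \cite{GauLit} goes, and your final paragraph on the Bialynicki-Birula identification is correct. One point where your middle step is slightly imprecise: you write as though every index $j$ contributes a free parameter $a_j$, but in fact only the load-bearing indices (with respect to $w\mathfrak{C}_{-f}$) do; at a non-load-bearing index the relevant one-parameter subgroup lies in the Iwahori and is absorbed, so the entry is forced to be $\delta_j$. This is exactly the dichotomy visible in Theorem~\ref{BB_cell}, and it is what makes the dimension of the affine space equal to $\dim \delta$ rather than $p+1$. With that correction, your parametrisation and the $s^\eta$-contraction argument go through as stated.
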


This is a slight modification of the proposition in \cite{GauLit} as it uses retractions for every element of the Weyl group and not for $w_0$ alone. This does not change anything for the proof of the proposition as any other Weyl group element besides $w_0$ just corresponds to a different choice of a Borel and a different set of positive roots.

As mentioned above we can identify the open orbit $\grass_\lambda$ with the set of minimal galleries in $\hat{\Sigma}(\gamma_\lambda)$. One of the main results in \cite{GauLit} is the following theorem, which relates the intersection with the semi-infinite orbit with the retractions at infinity.

\begin{theorem} \textbf{(\cite[§7, Theorem 3]{GauLit})}
The restriction of $\hat{r}_w$ induces a map $r_w:\grass_\lambda \rightarrow \Gamma^w(\gamma_\lambda)$. Furthermore the union $\bigcup_\delta r_w^{-1}(\delta)$ of the fibres over all galleries in $\Gamma^w(\gamma_\lambda)$ with target $\nu$ is the intersection $S_\nu^w \cap \grass_\lambda$.
\end{theorem}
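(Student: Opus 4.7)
My approach is to establish the two parts of the statement separately, using Proposition \ref{BB-cells} together with the bijection between minimal galleries in $\hat\Sigma(\gamma_\lambda)$ and the open orbit $\grass_\lambda\subset X_\lambda$ provided by $\pi$. Throughout, let $\eta$ denote a generic one-parameter subgroup of $T$ in $w\mathfrak{C}_f$, so that the dynamics of $s^\eta$ as $s\to 0$ align with the center of the retraction $r_w$.

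For the first part, I must show that whenever $g$ is minimal, the combinatorial gallery $\hat r_w(g)$ is positively folded with respect to $w\mathfrak{C}_{-f}$. Since retractions preserve the type of a gallery, $\hat r_w(g)$ already has type $\type(\gamma_\lambda)$, so only the positivity of the folds has to be verified. I would use the characterization of $r_w$ as the unique simplicial map to $\ka$ fixing any apartment containing a sector pointing at the chamber $w$ at infinity: a fold of $\hat r_w(g)$ at a small face $\Delta_j'$ arises precisely when the $j$-th and $(j+1)$-st alcoves of $g$ lie in two distinct apartments that have been identified by the retraction, and the common image alcove must sit in the half-space of $\ka$ furthest from the centre of retraction. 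Translating the ``furthest from $w$'' condition back into the language of Definition \ref{positive_gallery} gives exactly the statement that the half-space containing $\Delta_j$ can be separated from $w\mathfrak{C}_{-f}$.

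For the second part, I would exploit the Bialynicki-Birula description of the fibres of $\hat r_w$. Given a minimal $g\in\hat\Sigma(\gamma_\lambda)$ with $\hat r_w(g)=\delta$ of target $\nu$, Proposition \ref{BB-cells} gives $\lim_{s\to 0} s^\eta\cdot g=\delta$, and applying the $T$-equivariant proper map $\pi$ yields $\lim_{s\to 0} s^\eta\cdot\pi(g)=\pi(\delta)=L_\nu$. I would then invoke the Bialynicki-Birula identification $S_\nu^w=\{x\in\grass\mid\lim_{s\to 0} s^\eta\cdot x=L_\nu\}$ to conclude $\pi(g)\in S_\nu^w\cap\grass_\lambda$. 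Conversely, given $x\in S_\nu^w\cap\grass_\lambda$, I would lift to the unique minimal gallery $g$ via the bijection $\pi$ on the open orbit, set $\delta=\hat r_w(g)$, and read off the target of $\delta$ as the unique $T$-fixed point in the closure of the orbit of $x$ under $s^\eta$ as $s\to 0$, namely $L_\nu$.

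The main obstacle is establishing the Bialynicki-Birula identification $S_\nu^w=\{x\mid \lim_{s\to 0} s^\eta\cdot x=L_\nu\}$ at the level of the Grassmannian. Using $S_\nu^w=wU^-(\kk)w^{-1}L_\nu$ and relation (i) of Section \ref{section_2}, conjugation by $s^\eta$ rescales each affine root group $U_{\alpha,n}$ by the factor $s^{\langle\alpha,\eta\rangle+n}$; the genericity of $\eta\in w\mathfrak{C}_f$ must be invoked to ensure that this exponent is strictly positive on every affine root group appearing in $wU^-(\kk)w^{-1}$, so that the entire group contracts to the identity as $s\to 0$ and $L_\nu$ is the only limit point. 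Once this contraction is in hand, the two limit computations above match perfectly and the theorem follows by assembling the BB-cell description with the birational identification of minimal galleries with $\grass_\lambda$.
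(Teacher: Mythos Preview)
The paper does not prove this theorem itself; it is quoted from \cite[\S 7, Theorem~3]{GauLit}, so there is no in-paper argument to compare against.

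Your overall strategy---pushing the Bialynicki--Birula description of Proposition~\ref{BB-cells} down to the Grassmannian via the proper $T$-equivariant map $\pi$---is the natural one and is in the spirit of the proof in \cite{GauLit}. The geometric sketch for the first part (positivity of the folds of a retracted minimal gallery) also captures the right idea.

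The genuine gap is in your last paragraph. First, the scaling formula is wrong: $\eta$ is a cocharacter of the \emph{classical} torus $T$ and does not act on the loop variable $t$, so relation~(i) gives $s^\eta x_\alpha(at^n)s^{-\eta}=x_\alpha(s^{\langle\alpha,\eta\rangle}at^n)$ with no ``$+n$''. Second, and more seriously, even granting your formula the contraction claim cannot hold: the group $wU^-(\kk)w^{-1}$ contains the affine root groups $U_{\beta,n}$ for $\beta\in w\Phi^-$ and \emph{all} $n\in\IZ$, so no single cocharacter makes the exponent strictly positive on all of them simultaneously. With the correct formula the exponent is $\langle\beta,\eta\rangle$, which for $\eta\in w\mathfrak{C}_f$ and $\beta\in w\Phi^-$ is strictly \emph{negative}: these root groups expand rather than contract as $s\to 0$. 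Hence the identification $S_\nu^w=\{x\mid\lim_{s\to 0}s^\eta x=L_\nu\}$ does not hold for $\eta\in w\mathfrak{C}_f$ as you assert; the conventions (direction of the limit, chamber containing $\eta$, or the relative indexing of $r_w$ versus $S_\nu^w$) must be re-examined. In any event one cannot argue by contracting the infinite-dimensional loop group directly; the limit has to be taken inside the projective variety $\overline{\grass_\lambda}$, and the resulting BB-strata then identified with the restrictions of the semi-infinite cells by a separate argument (this is where \cite{MirVil} enters).
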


To give a better description of the fibres, we introduce an open covering of the Bott-Samelson variety.

For a combinatorial gallery $\delta =[\delta_0,\ldots,\delta_p] \in \Gamma(\gamma_\lambda)$ and $w \in W$ we define 
$$\ku_0^w = \prod_{\begin{array}{c}\beta \in w(\Phi^+), \\ \delta_0^{-1}\beta < 0 \end{array}}U_\beta \cdot \prod_{\begin{array}{c}\alpha \in w(\Phi^-), \\ \delta_0^{-1}\alpha < 0 \end{array}} U_\alpha \cdot \delta_0. $$
Furthermore for $1 \leq j \leq p$ we define
$$\ku_j = \left\{ \begin{array}{cl} U_{\alpha_{i_j}}s_{i_j} & \text{if } \delta_j = s_{i_j} \\
U_{-\alpha_{i_j}} & otherwise. \end{array} \right.$$

The following definition is a modification of the one in \cite[§ 8]{GauLit}, it is just the translation of the definition given there to the case of the simplified definition of the Bott-Samelson we introduced earlier.

\begin{definition} \textbf{(\cite[§8, Def. 23]{GauLit})}
Let $\delta =[\delta_0,\ldots ,\delta_p] \in \Gamma(\gamma_\lambda)$ be a combinatorial gallery and $w \in W$. Then we define the subvariety $\ku_\delta^w$ of $\hat{\Sigma}(\gamma_\lambda)= G(\ko) \times_{\ki} \kp_1 \times_{\ki} \ldots \times_{\ki} \kp_p / \ki$ as follows
$$ \ku_\delta^w= \ku_0^w \times \ku_1 \times \ldots \times \ku_p.$$
\end{definition}

The collection of these subvarieties for a given $w \in W$ form an affine open covering of $\hat{\Sigma}(\gamma_\lambda)$.

\begin{remark}
By Proposition \ref{BB-cells}, it is evident that for a combinatorial gallery $\delta$, the cell $C_w(\delta)$ lies inside $\ku_\delta^w$.
\end{remark}

\begin{remark}
If we talk about galleries in such an open piece $\ku_\delta^w$ of the Bott-Samelson variety corresponding to a combinatorial gallery, we always mean that the gallery is written in the corresponding chart as given above, i.e., we have the form
$$g_0 = \prod_{\begin{array}{c}\beta \in w(\Phi^+), \\ \delta_0^{-1}\beta < 0 \end{array}}x_{\beta}(a_\beta) \cdot \prod_{\begin{array}{c}\alpha \in w(\Phi^-), \\ \delta_0^{-1}\alpha < 0 \end{array}} x_\alpha(a_\alpha) \cdot \delta_0, $$
for some arbitrary complex parameters and for $1 \leq j \leq p$ we have
$$g_j = \left\{ \begin{array}{cl} x_{\alpha_{i_j}}(a_j)s_{i_j} & \text{if } \delta_j = s_{i_j} \\
x_{-\alpha_{i_j}}(a_j) & otherwise, \end{array} \right.$$
for some $a_j \in \IC$.
\end{remark}

The structure of the fibres of the retraction at infinity can then be described more precise in the following way, stated in \cite{GauLit}.

\begin{theorem} \label{BB_cell} \textbf{(\cite[§11, Theorem 4]{GauLit})}
Let $\lambda \in X^\vee_+$ and let $\delta=[\delta_0,\delta_1,\ldots,\delta_p] \in \Gamma^+(\gamma_\lambda)$. Then $\hat{r}_{w_0}^{-1}(\delta)$ is a subvariety of $\ku_\delta$ isomorphic to a product of $\IC$'s and $\IC^*$'s. More precisely, the fibre consists of all galleries $[g_0,g_1,\ldots,g_p]$ such that
$$g_0 \in \prod_{\beta < 0, \delta_0^{-1}(\beta)<0} U_\beta \cdot \delta_0 \text{ and } g_j = \left\{ \begin{array}{cl} \delta_j & \text{if } j \not\in J_{-\infty}(\delta) \\ x_{-\alpha_{i_j}}(a_j), a_j \neq 0& \text{if } j \in J_{-\infty}^-(\delta) \\ x_{\alpha_{i_j}}(a_j)s_{i_j} & \text{if } j \in  J_{-\infty}^+(\delta) \end{array} \right. $$
\end{theorem}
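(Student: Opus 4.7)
The plan is to analyze the retraction $\hat r_{w_0}$ step-by-step along the gallery, exploiting that it acts on each alcove independently (and preserves the type), so one reduces to a local condition on each factor $g_j$ of the chart $\ku_\delta^{w_0}$. Since by Proposition \ref{BB-cells} the fibres of $\hat r_{w_0}$ are the Bialynicki-Birula cells for a generic cocharacter $\eta \in w_0\mathfrak C_f = \mathfrak C_{-f}$, conjugation by $s^\eta$ together with relation (1) in Section \ref{section_2} contracts the $U_\alpha$ with $\alpha \in \Phi^+$ to the identity as $s\to 0$ and stretches those with $\alpha \in \Phi^-$. This immediately kills the positive-root factors appearing in $\ku_0^{w_0}$ (those with $\alpha \in w_0(\Phi^-), \delta_0^{-1}\alpha < 0$), leaving precisely $g_0 \in \prod_{\beta < 0,\,\delta_0^{-1}\beta < 0} U_\beta \cdot \delta_0$ as a product of $\IC$'s.

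For each $1 \leq j \leq p$, I would distinguish three cases according to the geometry of the wall $H_j$ containing the small face $\Delta_j'$. If $H_j$ is not load-bearing (i.e.\ $j \notin J_{-\infty}(\delta)$), then turning on the parameter $a_j$ in $\ku_j$ would push the $j$-th alcove to a position that the retraction moves away from $\delta_j$; hence $a_j = 0$ and $g_j = \delta_j$. If $j \in J_{-\infty}^+(\delta)$, then $\delta_j = s_{i_j}$ and, since $\Gamma_{j-1}$ already lies on the $\mathfrak C_{-f}$-side of $H_j$, the alcove $\Delta_j$ produced by $x_{\alpha_{i_j}}(a_j) s_{i_j}$ retracts back to $\delta_j$ for every $a_j \in \IC$, giving a full $\IC$. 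If $j \in J_{-\infty}^-(\delta)$, then $\delta$ is genuinely folded at $\Delta_j'$ ($\delta_j = \mathrm{id}$); applying the folding identity $x_\alpha(t)x_{-\alpha}(-t^{-1})x_\alpha(t) = \overline{s}_\alpha t^{-\alpha^\vee}$ from relation (3) to $g_j = x_{-\alpha_{i_j}}(a_j)$ shows that the corresponding alcove coincides with the folded target precisely when $a_j \neq 0$, yielding a factor of $\IC^*$.

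The main obstacle is propagating the retraction through the gallery in a controlled way, so that the condition on $g_j$ genuinely depends only on the local wall data at step $j$ and not on the choices made in the prefix. Concretely, after retracting $g_0 \cdots g_{j-1}$ to $\delta_0 \cdots \delta_{j-1}$, one must conjugate the $\ku_j$-factor by $\delta_0 \cdots \delta_{j-1}$ into the affine root subgroup attached to the translate of $H_j$, and then verify that the sign distinction between $J_{-\infty}^+$ and $J_{-\infty}^-$ matches the algebraic dichotomy coming from relation (3). Chevalley's commutator formula controls the cross-terms introduced along the way, while relation (2) ensures these can be absorbed into re-parametrisations of later factors without altering the fibre structure. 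Once this bookkeeping is complete, assembling the three cases yields the claimed decomposition of $\hat r_{w_0}^{-1}(\delta)$ as a product of $\IC$'s (from $g_0$ together with indices in $J_{-\infty}^+$) and $\IC^*$'s (from indices in $J_{-\infty}^-$).
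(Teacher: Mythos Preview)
The paper does not supply its own proof of this theorem: it is quoted verbatim as \textbf{[GauLit, \S 11, Theorem 4]} and used as a black box, so there is nothing in the present paper to compare your argument against. Your outline is, however, the standard strategy behind the Gaussent--Littelmann proof: identify the fibre with a Bialynicki--Birula cell via Proposition~\ref{BB-cells}, use the torus action to kill the positive-root factors in $\ku_0^{w_0}$, and then analyse each step $j\geq 1$ locally according to whether the wall $H_j$ is load-bearing and from which side $\Delta_{j-1}$ approaches it.

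One caution: the paragraph you label ``the main obstacle'' is where the actual work lies, and you have not carried it out. The retraction $\hat r_{w_0}$ is defined alcove-by-alcove in the building, not factor-by-factor in the chart $\ku_\delta$, so showing that the condition on $g_j$ is genuinely local requires conjugating by the prefix $\delta_0\cdots\delta_{j-1}$ and checking (via Proposition~\ref{gallery_origin} or its analogue in \cite{GauLit}) that the resulting affine root has the correct sign exactly according to the $J_{-\infty}^{\pm}$ dichotomy. Your invocation of relations (2)--(4) to ``absorb cross-terms into later factors'' is plausible but is precisely the computation that \cite{GauLit} spends several pages on; as written, your proposal is a correct sketch of the architecture but not yet a proof.
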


By definition $\overline{S_\nu^w \cap \grass_\lambda}$ is the union of $Z(\delta)=\overline{r_w^{-1}(\delta)}$ for $\delta \in \Gamma^w(\gamma_\lambda,\nu)$. Since the intersection $S_\nu^w \cap \grass_\lambda$ is equidimensional, by \cite{MirVil2}, we can reduce this to the union of those galleries whose fibre has the maximal dimension. Hence, using a statement from \cite{GauLit} that a gallery and its fibre have the same dimension, we reduce to
$$ \overline{S_\nu^w \cap \grass_\lambda} = \bigcup _{\delta \in \Gamma^w_{LS}(\gamma_\lambda,\nu)}Z(\delta). $$
As seen in the theorem above an open subset of the MV-cycle can be given in a quite explicit way.

This can be used to describe the MV-cycle and MV-polytope by using the retractions in the directions for all Weyl group elements and using the GGMS-strata. As a GGMS-stratum that lies inside $\grass_\lambda$ and is the intersection of $S_\nu$ with other semi-infinite cells defines an open subset of a MV-cycle of $\overline{S_\nu \cap \grass_\lambda}$, we obtain the following proposition by combining the above mentioned results.

\begin{proposition} \label{retraction_galleries}
Let $A(\mu_\bullet) \subset \overline{S_\nu \cap \grass_\lambda}$ be an MV-cycle of coweight $(\lambda,\nu)$. 

Then there exist an open subset $O \subset A(\mu_\bullet) \cap \grass_\lambda$ and galleries \break $\delta_w \in \Gamma^w_{LS}(\gamma_\lambda,\mu_w)$ for each element of the Weyl group, such that $r_w(x)=\delta_w$ for all $x \in O$.
\end{proposition}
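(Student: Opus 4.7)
The plan is to produce, for each $w \in W$, an LS-gallery $\delta_w$ on which $r_w$ is generically constant along the MV-cycle, and then to intersect the resulting open subsets over all $w \in W$. The key ingredients are the formula $\overline{S^w_{\mu_w} \cap \grass_\lambda} = \bigcup_{\delta \in \Gamma^w_{LS}(\gamma_\lambda, \mu_w)} Z(\delta)$ recalled just before the proposition, the irreducibility of the MV-cycle $A(\mu_\bullet)$, and the non-emptiness of the GGMS-stratum $\bigcap_{v \in W} S^v_{\mu_v}$ as an open subset of $A(\mu_\bullet) \cap \grass_\lambda$.

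Fixing $w$, the GGMS-stratum shows that $Y_w := A(\mu_\bullet) \cap \grass_\lambda \cap S^w_{\mu_w}$ is a non-empty open, hence dense, subvariety of the irreducible MV-cycle $A(\mu_\bullet)$. Passing to closures gives $A(\mu_\bullet) \subset \overline{S^w_{\mu_w} \cap \grass_\lambda}$, which by the displayed formula decomposes as a finite union of irreducible closed subvarieties $Z(\delta)$ indexed by $\delta \in \Gamma^w_{LS}(\gamma_\lambda, \mu_w)$; irreducibility of $A(\mu_\bullet)$ then extracts a single LS-gallery $\delta_w$ with $A(\mu_\bullet) \subset Z(\delta_w)$. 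Setting $O_w := A(\mu_\bullet) \cap r_w^{-1}(\delta_w)$, openness of the BB-cell $r_w^{-1}(\delta_w)$ inside its closure $Z(\delta_w)$ makes $O_w$ an open subset of $A(\mu_\bullet)$.

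The main obstacle is showing that $O_w$ is non-empty, i.e., that the LS-gallery produced by the irreducibility argument is the one actually attained by $r_w$ on a dense open subset of $A(\mu_\bullet)$ and not only on the boundary of $Z(\delta_w) \cap A(\mu_\bullet)$. Were $O_w$ empty, $A(\mu_\bullet)$ would sit inside $Z(\delta_w) \setminus r_w^{-1}(\delta_w)$, a finite union of locally closed pieces $Z(\delta_w) \cap r_w^{-1}(\delta')$ for $\delta' \neq \delta_w$; irreducibility would then force $A(\mu_\bullet) \subset Z(\delta')$ for a single such $\delta'$. A $\delta'$ whose target is strictly below $\mu_w$ is ruled out at once, because $A(\mu_\bullet)$ would then avoid $S^w_{\mu_w}$ entirely, contradicting $Y_w \neq \emptyset$. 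Ruling out a $\delta'$ of target $\mu_w$ is the delicate case: here $A(\mu_\bullet) \subset Z(\delta_w) \cap Z(\delta')$ is a proper closed subvariety of $Z(\delta_w)$, and the plan is to combine the equidimensionality of $S^w_{\mu_w} \cap \grass_\lambda$ from \cite{MirVil2} with the dimension bound for non-LS galleries from \cite{GauLit} to force a contradiction with the MV-cycle dimension of $A(\mu_\bullet)$.

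Once $O_w$ is shown to be dense in $A(\mu_\bullet)$ for every $w$, the intersection $O := \bigcap_{w \in W} (O_w \cap \grass_\lambda)$ is a finite intersection of open dense subsets of the irreducible variety $A(\mu_\bullet)$, hence an open non-empty subset of $A(\mu_\bullet) \cap \grass_\lambda$. By construction, $r_w(x) = \delta_w \in \Gamma^w_{LS}(\gamma_\lambda, \mu_w)$ for every $x \in O$ and every $w \in W$, completing the argument.
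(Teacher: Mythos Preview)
Your overall strategy matches the paper's: for each $w$ find an open subset on which $r_w$ is constant equal to some LS-gallery $\delta_w$, then intersect over $W$. The paper's proof is a two-line version of this, simply asserting that for each $w$ there is $\delta_w \in \Gamma^w_{LS}(\gamma_\lambda,\mu_w)$ with $r_w^{-1}(\delta_w) \subset A(\mu_\bullet) \cap \grass_\lambda$ open, and then taking the finite intersection.

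The difference is in how the key step is justified. The paper is implicitly using the fact (from Kamnitzer, recalled in Section~\ref{section_3}) that the MV-cycle $A(\mu_\bullet)$ is an irreducible component of $\overline{S^w_{\mu_w} \cap \grass_\lambda}$ for \emph{every} $w$, not just for $w = \mathrm{id}$. Granting this, the displayed decomposition $\overline{S^w_{\mu_w} \cap \grass_\lambda} = \bigcup_\delta Z(\delta)$ into irreducible components forces $A(\mu_\bullet) = Z(\delta_w)$ for a unique $\delta_w$, and then the entire BB-cell $r_w^{-1}(\delta_w)$ sits inside $A(\mu_\bullet)$ as an open dense subset. This is stronger than what you establish and sidesteps the ``delicate case'' entirely.

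Your route instead only gets $A(\mu_\bullet) \subset Z(\delta_w)$ from irreducibility, and then must argue non-emptiness of $O_w$. Your plan for the delicate case (two distinct LS-galleries $\delta_w, \delta'$ of the same target $\mu_w$) is to derive $\dim A(\mu_\bullet) < \dim Z(\delta_w)$ and contradict the MV-cycle dimension. But to run that contradiction you need $\dim A(\mu_\bullet) = \dim Z(\delta_w)$, i.e.\ that the MV-cycle dimension in direction $\mathrm{id}$ agrees with the equidimension of $\overline{S^w_{\mu_w} \cap \grass_\lambda}$. That equality is precisely the statement that $A(\mu_\bullet)$ is a component in direction $w$ --- the Kamnitzer input the paper invokes directly. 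So your detour through the boundary stratification of $Z(\delta_w)$ ultimately requires the same external fact; once you grant it, the paper's shortcut ($A(\mu_\bullet) = Z(\delta_w)$, hence $r_w^{-1}(\delta_w)$ is already the open cell) is cleaner and makes your ``main obstacle'' paragraph unnecessary.
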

\begin{proof}
For each $w \in W$ there exists $\delta_w \in \Gamma_{LS}^w(\gamma_\lambda,\mu_w)$ such that $r_w^{-1}(\delta_w) \subset A(\mu_\bullet) \cap \grass^\lambda$ and it is an open subset. Hence 
$$ \bigcap_{w \in W} r_w^{-1}(\delta_w) $$
is dense in $A(\mu_\bullet) \cap \grass^\lambda$.
\end{proof}

For a fixed LS-gallery $\delta$, we thus have to construct galleries $\delta_w$ for $w \in W$ with the properties from the proposition. By using the coweights of these galleries $d_\bullet=(wt(\delta_w))_{w \in W}$ we then obtain the MV-cycle as the closure of the GGMS-stratum $A(d_\bullet)$. In the next part we will construct these galleries.

%%%%%%%%%%%%%%%%%%%%%%%%%%%%%%%%%%%%%%%%%%%%%%%%%%%%%%%%%%%%%%%%%%%%%%%%%%%%%%%%%%%%%%%%%%%%%%%%%%%%%%%%%%%%%%%%%%%%%%%%%%%%%%%%%%%%%%%%
%%%%%%%%%%%%%%%%%%%%%%%%%%%%%%%%%%%%%%%%%%%%%%%%%%%%%%%%%%%%%%%%%%%%%%%%%%%%%%%%%%%%%%%%%%%%%%%%%%%%%%%%%%%%%%%%%%%%%%%%%%%%%%%%%%%%%%%%
%%%%%%%%%%%%%%%%%%%%%%%%%%%%%%%%%%%%%%%%%%%%%%%%%%%%%%%%%%%%%%%%%%%%%%%%%%%%%%%%%%%%%%%%%%%%%%%%%%%%%%%%%%%%%%%%%%%%%%%%%%%%%%%%%%%%%%%%

\section{Sections}\label{section_5}
We now want to divide an LS-gallery into parts that will be called sections and study the behaviour of these sections with respect to the root operators that were defined in the last part. Everything stated about sections would, in a very similar matter, also work for the path model described in \cite{Lit4}, \cite{Lit5}, and \cite{Lit6}.

\subsection*{Stable and directed sections}

We start by analysing certain alcoves of an LS-gallery that, when applying the operators $e_\alpha$ or $f_\alpha$, for a chosen simple root $\alpha$, successively to the gallery, are not reflected at any hyperplane, but only translated.

In the following let $\delta = ( \Delta_0' \subset \Delta_0 \supset \Delta_1' \subset \ldots \supset \Delta_p' \subset \Delta_p \supset \Delta_{p+1}' )$ be a gallery and unless otherwise stated $\delta \in \Gamma_{LS}^+(\gamma_\lambda)$.

\begin{definition}\label{stable}
Let $\alpha$ be a simple root. An alcove $\Delta_i$ of $\delta$ is called \textit{$\alpha$-stable} if there exists $m \in \IZ$ and $0 \leq l \leq i < r \leq p+1$, such that
\begin{enumerate}
\item $\Delta_{r}' \subset H_{\alpha,m}$,
\item $\Delta_{l}' \subset H_{\alpha,m}$, and
\item for all $l \leq s < r$ : $\Delta_s' \not\subset H_{\alpha,m-1}$.
\end{enumerate}

If the index $m$ is minimal such that the indices $l$ and $r$ exist we say that $\Delta_i$ is \textit{$\alpha$-stable at $m$}, in this case we denote by $r_\alpha(\Delta_i)$ the minimal index such that properties (i) and (iii) are fulfilled and by $l_\alpha(\Delta_i)$ the maximal index such that properties (ii) and (iii) are fulfilled.

Let $R_{\alpha,m}(\delta)$ be the set of all indices $j$, such that $\Delta_j$ is $\alpha$-stable at $m$ and 
$$R_{\alpha}(\delta):=\bigcup_{m \in \IZ} R_{\alpha,m}(\delta),$$
the set of all indices corresponding to $\alpha$-stable alcoves.
\end{definition}

We first look at some basic properties of these indices.

\begin{lemma} \label{sec_stable}
If $i \in R_{\alpha,m}(\delta)$, then $j \in R_{\alpha,m}(\delta)$ for all $l_\alpha(\Delta_i) \leq j < r_\alpha(\Delta_i)$.
\begin{proof}
We first assume that $j \notin R_{\alpha,m}(\delta)$. As $l_\alpha(\Delta_i) \leq j < r_\alpha(\Delta_i)$ and they satisfy condition (i), (ii), and (iii) of Definition \ref{stable}, we know that $j \in R_{\alpha}(\delta)$. Hence by assumption $j \in R_{\alpha,n}(\delta)$ for $n < m$. Since $\Delta_s' \not\subset H_{\alpha,m-1}$ for $l_\alpha(\Delta_i) \leq s < r_\alpha(\Delta_i)$ this means that $l_\alpha(\Delta_j) < l_\alpha(\Delta_i)$ and $r_\alpha(\Delta_i) < r_\alpha(\Delta_j)$, which implies that $i \in R_{\alpha,n}(\delta)$, for $n<m$, which is a contradiction.
\end{proof}
\end{lemma}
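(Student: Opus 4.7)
The plan is to argue by contradiction, following the skeleton already suggested by the minimality built into Definition \ref{stable}. Suppose $j$ with $l_\alpha(\Delta_i) \leq j < r_\alpha(\Delta_i)$ does not belong to $R_{\alpha,m}(\delta)$. Setting $l := l_\alpha(\Delta_i)$ and $r := r_\alpha(\Delta_i)$, the very same triple $(l,r,m)$ already verifies conditions (i)--(iii) of Definition \ref{stable} for $j$, since these conditions depend only on $l, r$ and the ambient gallery, not on $i$. Hence $j \in R_\alpha(\delta)$, and the assumption $j \notin R_{\alpha,m}(\delta)$ combined with minimality forces $j \in R_{\alpha,n}(\delta)$ for some $n < m$; write $l' := l_\alpha(\Delta_j)$ and $r' := r_\alpha(\Delta_j)$ for the corresponding witnesses.

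The core of the proof is to show $l' < l$ and $r' > r$. Suppose instead $l \leq l' \leq r$. The face $\Delta_{l'}'$ would then lie in $H_{\alpha,n}$ with $n \leq m-1$, at an index inside the interval protected by condition (iii) for $i$. Geometrically, condition (iii) means that no face $\Delta_s'$ with $l \leq s < r$ crosses $H_{\alpha,m-1}$, so the alcoves $\Delta_l,\ldots,\Delta_{r-1}$ remain on one side of this hyperplane, namely in $H_{\alpha,m-1}^+$ (the side containing $H_{\alpha,m}$, as forced by conditions (i) and (ii)). A face sitting inside this strip cannot lie in $H_{\alpha,n}$ for any $n < m$: the case $n = m-1$ is a direct violation of (iii), and the case $n \leq m-2$ would require some intermediate face to have crossed $H_{\alpha,m-1}$ in the passage from $\Delta_l$ down to level $n$, again violating (iii). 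The symmetric argument rules out $l \leq r' < r$; the boundary case $r' = r$ is impossible because $\Delta_r'$ cannot simultaneously lie in the two distinct parallel hyperplanes $H_{\alpha,m}$ and $H_{\alpha,n}$.

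With $l' < l$ and $r' > r$ established, we have $l' \leq i < r'$, and the triple $(l', r', n)$ witnesses conditions (i)--(iii) for $i$ at level $n$. Consequently $i \in R_{\alpha,k}(\delta)$ for some $k \leq n < m$, contradicting the minimality of $m$ guaranteed by $i \in R_{\alpha,m}(\delta)$, and completing the proof.

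The main obstacle is the geometric observation in the middle paragraph: that condition (iii) pins the whole stretch of alcoves $\Delta_l,\ldots,\Delta_{r-1}$ on one side of $H_{\alpha,m-1}$, so that any witness to $\alpha$-stability at a strictly lower level $n$ must straddle the interval $[l,r]$ strictly. Once this is granted, the witness-transfer from $j$ to $i$ is essentially bookkeeping, but the geometric step requires a careful look at how the $\alpha$-level of successive alcoves can change along the gallery.
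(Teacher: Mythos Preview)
Your proof is correct and follows the same contradiction strategy as the paper: both argue that the witnesses $(l',r')$ for $j$ at level $n<m$ must strictly straddle $(l,r)$, which then transfers stability at level $n$ to $i$ and contradicts minimality of $m$. The paper compresses the middle step into a single sentence invoking condition (iii), whereas you spell out the geometric reason why no face $\Delta_s'$ with $l \le s < r$ can lie in $H_{\alpha,n}$ for $n<m$; this elaboration is sound and is exactly what the paper leaves implicit.
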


In addition we also want to see how the neighbourhood of indices not $\alpha$-stable at any $m$ looks like.

\begin{lemma} \label{sec_directed}
If $i \notin R_{\alpha}(\delta)$ and $j \leq i$ is maximal such that $\Delta_j' \subset H_{\alpha,m}$ for some $m \in \IZ$ and $k > i$ is minimal such that $\Delta_k' \subset H_{\alpha,m'}$ for some $m' \in \IZ$, then $\{j,\ldots, k-1 \} \cap R_{\alpha}(\delta) = \emptyset$ and furthermore $m-m' = \pm 1$.
\end{lemma}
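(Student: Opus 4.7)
The key geometric observation is that the hypotheses on $j$ and $k$ confine the alcoves $\Delta_j, \Delta_{j+1}, \ldots, \Delta_{k-1}$ to a single $\alpha$-strip of the apartment. Indeed, for every $t$ with $j < t < k$ the maximality of $j$ and the minimality of $k$ force $\Delta_t'$ to miss every hyperplane $H_{\alpha,n}$; hence between steps $j$ and $k$ the gallery neither crosses nor folds along a hyperplane of the form $H_{\alpha,n}$, and so the alcoves in question all lie in a single closed strip $[H_{\alpha,a}, H_{\alpha,a+1}]$ for some $a \in \IZ$. Since the face $\Delta_j' \subset H_{\alpha,m}$ is a wall of this strip and $\Delta_k' \subset H_{\alpha,m'}$ is also a wall of it (possibly the same wall), this already forces $m' \in \{m-1, m, m+1\}$.

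To upgrade this to $m - m' = \pm 1$ I would rule out $m = m'$. If $m = m'$, then the pair $(l,r) = (j,k)$ at level $m$ verifies all three clauses of Definition \ref{stable} for the alcove $\Delta_i$: clauses (i) and (ii) are immediate, and clause (iii) holds because the intermediate faces $\Delta_t'$ with $j < t < k$ are non-horizontal, while $\Delta_j' \subset H_{\alpha,m}$ is distinct from $H_{\alpha,m-1}$. Hence $i$ would lie in $R_\alpha(\delta)$, contradicting the standing hypothesis.

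The disjointness claim $\{j,\ldots,k-1\} \cap R_\alpha(\delta) = \emptyset$ I would prove by an analogous contradiction argument. Suppose some $s \in \{j,\ldots,k-1\}$ lies in $R_{\alpha,n}(\delta)$ with witnesses $l' \leq s < r'$ at level $n$. The faces $\Delta_{l'}'$ and $\Delta_{r'}'$ are horizontal, so $l'$ and $r'$ must lie outside the open interval $(j, k)$; combining this with $l' \leq s < k$ and $r' > s \geq j$ yields $l' \leq j$ and $r' \geq k$. Hence $l' \leq j \leq i < k \leq r'$, and the very same triple $(l', r', n)$ sandwiches $\Delta_i$, placing $i$ into $R_\alpha(\delta)$ --- again a contradiction.

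The delicate point, and the step I would verify carefully while writing up, is the asymmetry of clause (iii) of Definition \ref{stable}, which forbids $\Delta_t' \subset H_{\alpha,m-1}$ but not $\Delta_t' \subset H_{\alpha,m+1}$. In our situation this asymmetry turns out to be harmless: depending on whether the common strip sits below or above $H_{\alpha,m}$, the ``forbidden'' hyperplane $H_{\alpha,m-1}$ either coincides with the lower wall of the strip (which the intermediate non-horizontal faces avoid by construction) or sits one level below the strip entirely and is met by no alcove of the strip at all. Either way clause (iii) becomes automatic and the case analysis collapses into a single uniform argument.
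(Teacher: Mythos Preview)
Your argument is correct and follows essentially the same route as the paper's proof. The paper also rules out $m=m'$ by observing that the pair $(j,k)$ witnesses $\alpha$-stability of $\Delta_i$, and handles the disjointness claim by pushing the witnesses for $s$ outside $(j,k)$ and then concluding that $i$ is $\alpha$-stable; the only cosmetic difference is that the paper cites Lemma~\ref{sec_stable} for that last step, whereas you argue it directly from the definition, and your final paragraph on the asymmetry of clause~(iii) makes explicit a point the paper leaves implicit.
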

\begin{proof}
If there exists $s \in {j,\ldots, k-1 }$ and an $n \in \IZ$ such that $s \in R_{\alpha,n}(\delta)$, then $l_\alpha(\Delta_s) \leq j$ and $r_\alpha(\Delta_s) \geq k$ and thus $l_\alpha(\Delta_s) \leq i < r_\alpha(\Delta_s)$. By Lemma \ref{sec_stable} this implies $i \in R_{\alpha,n}(\delta)$ which is a contradiction. \\
It is obvious that $m-m' \in \{-1, 0, 1\}$. Suppose $m-m'=0$, then $i \in R_{\alpha,m}(\delta)$ unless $m$ is not minimal, as the pair $(j,k)$ satisfies condition (i) - (iii) of Definition \ref{stable}. But by Remark \ref{non_minimum} this shows that $i$ is $\alpha$-stable, which is a contradiction.
\end{proof}

\begin{definition}\label{section}
For $\delta \in \Gamma_{LS}^+(\gamma_\lambda)$, an interval $[i,j] \subset [0,p]$ is called
\begin{itemize}
\item \textit{$\alpha$-directed section} if there exists an $m \in \IZ$, such that for all $k \in [i,j-1]$
$$ 
\left\{\begin{array}{r}
\Delta_k' \subset H_{\alpha,m} \Rightarrow k=i \\
\Delta_k' \subset H_{\alpha,m+1} \Rightarrow k=j 
\end{array}\right\} \wedge k \notin R_\alpha(\delta)
$$
holds.
\item \textit{$(-\alpha)$-directed section} if there exists an $m \in \IZ$, such that for all $k \in [i,j-1]$
$$\left\{\begin{array}{r}
\Delta_k' \subset H_{\alpha,m} \Rightarrow k=i \\
\Delta_k' \subset H_{\alpha,m-1} \Rightarrow k=j 
\end{array}\right\} \wedge k \notin R_\alpha(\delta)
$$
holds.
\item \textit{$\alpha$-stable section} at $m$ if there exists a $k \in [i,j-1]$, such that $k \in R_{\alpha,m}(\delta)$ and $r_\alpha(\Delta_k)=j$ and $l_\alpha(\Delta_k)=i$.
\end{itemize}
\end{definition}

\begin{remark}
For the directed section the inclusion conditions also imply that no $\Delta_k'$ lies in any $\alpha$-hyperplane as long as $k$ is not equal to $i$ or $j$.
\end{remark}

This definition allows us to divide our gallery into parts, each being one of the three sections above.

\begin{lemma} \label{partition}
For $\delta \in \Gamma_{LS}^+(\gamma_\lambda)$, there exists a unique partition $i_1 < i_2 < \ldots < i_t$, such that each interval $[i_k,i_{k+1}]$ is an $\alpha$-directed, $(-\alpha)$-directed, or $\alpha$-stable section.
\end{lemma}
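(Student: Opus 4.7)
The plan is to build the partition inductively, letting the dichotomy between $R_\alpha$-indices and non-$R_\alpha$-indices (captured in Lemmas \ref{sec_stable} and \ref{sec_directed}) select each break-point. First, set $i_1 := 0$; since $\Delta_0' = F_f$ lies in $H_{\alpha, 0}$, index $0$ is always an $\alpha$-crossing. Assume inductively that $i_1 < \cdots < i_k$ have been constructed, with each $\Delta_{i_j}'$ contained in some $H_{\alpha, m_j}$. The job is to select $i_{k+1}$ (or terminate) by a case distinction on whether $i_k \in R_\alpha(\delta)$.

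If $i_k \in R_\alpha(\delta)$, let $m$ be minimal with $i_k \in R_{\alpha, m}(\delta)$ and set $i_{k+1} := r_\alpha(\Delta_{i_k})$. Lemma \ref{sec_stable} then forces every index in $[l_\alpha(\Delta_{i_k}), i_{k+1})$ to lie in $R_{\alpha, m}(\delta)$, and combined with the subsidiary identity $l_\alpha(\Delta_{i_k}) = i_k$ this identifies $[i_k, i_{k+1}]$ as an $\alpha$-stable section at $m$ in the sense of Definition \ref{section}. If instead $i_k \notin R_\alpha(\delta)$, take $i_{k+1}$ to be the smallest index $> i_k$ such that $\Delta_{i_{k+1}}'$ lies in some $H_{\alpha, m'}$. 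Applying Lemma \ref{sec_directed} to any index in between yields both $\{i_k, \ldots, i_{k+1}-1\} \cap R_\alpha(\delta) = \emptyset$ and $m_k - m' = \pm 1$, so $[i_k, i_{k+1}]$ is an $\alpha$-directed section when $m' = m_k + 1$ and a $(-\alpha)$-directed section when $m' = m_k - 1$.

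Since $i_{k+1} > i_k$ strictly and the face indices are bounded, the procedure terminates at some $i_t = p+1$ (using that $\Delta_{p+1}' = F_\mu$ is itself an $\alpha$-crossing, as $\langle \alpha, \mu \rangle \in \IZ$). Uniqueness is then immediate from the construction: in the stable case the next break is forced to be $r_\alpha(\Delta_{i_k})$, and in the non-stable case it is forced to be the next $\alpha$-crossing past $i_k$, both being intrinsic data of $\delta$.

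The main obstacle I anticipate is verifying the subsidiary identity $l_\alpha(\Delta_{i_k}) = i_k$ in the stable case; without it, a newly-produced stable section could silently overlap its predecessor or exceed the intended left endpoint. I plan to establish this by tracking the preceding section inductively: each prior break-point $i_{k-1}$ is itself an $\alpha$-crossing whose level and position are determined by the case of the preceding section, and these facts together with the minimality clause for $m$ in Definition \ref{stable} prevent the leftward extension of the stable region at level $m$ past $i_k$.
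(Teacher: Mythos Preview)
Your proposal is correct and follows essentially the same approach as the paper: the paper's proof is a two-line sketch (``existence follows from Lemma~\ref{sec_stable} and Lemma~\ref{sec_directed}, uniqueness is obvious for directed sections and follows from the subsequent remark for stable sections''), and you have spelled out the inductive construction that this sketch leaves implicit. Your identification of the subsidiary claim $l_\alpha(\Delta_{i_k}) = i_k$ as the only non-obvious point is apt, and your plan for it is sound: if $l_\alpha(\Delta_{i_k}) < i_k$ then, since the preceding section contains no $\alpha$-crossings strictly between $i_{k-1}$ and $i_k$, one would have $l_\alpha(\Delta_{i_k}) \le i_{k-1}$, whence Lemma~\ref{sec_stable} forces $i_{k-1} \in R_{\alpha}(\delta)$, contradicting the directedness of $[i_{k-1},i_k]$; the remaining case, where $[i_{k-1},i_k]$ itself was stable, is ruled out by the remark following the lemma (two stable sections cannot be adjacent), which in your framework amounts to checking that $r_\alpha(\Delta_{i_{k-1}}) \notin R_\alpha(\delta)$---again an immediate consequence of Lemma~\ref{sec_stable} and the maximality built into $r_\alpha$.
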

\begin{proof}
The existence of a partition follows from Lemma \ref{sec_stable} and Lemma \ref{sec_directed} and the uniqueness is obvious for the directed sections and for the stable sections it is implicated by the following remark.
\end{proof}

\begin{remark}
For an $\alpha$-stable section, it is obvious that, that neither $[i_{k-1},i_{k}]$ nor $[i_{k+1},i_{k+2}]$ are $\alpha$-stable, since $\alpha$-stable sections can not be enlarged by definition.
\end{remark}

\begin{remark}\label{order_directed}
Let $i < j < k < l$, such that $[i,j]$ is an $\alpha$-directed section. Assume that that $[j,k]$ is a $(-\alpha)$-directed section, but this implies that all $\Delta_s$ for $s \in [i,k]$ is $\alpha$-stable, which is a contradiction. Alternatively assume that $[j,k]$ is a stable section and $[k,l]$ is a $(-\alpha)$-directed section. As before this implies that $\Delta_s$ for $s \in [i,l]$ is $\alpha$-stable, which is again a contradiction.

For our partition this means that all $\alpha$-directed section appear after the $(-\alpha)$-directed ones, with $\alpha$-stable sections appearing anywhere in between.
\end{remark}

\subsection*{Sections and root operators}

We want to see how the defined partition of our gallery interacts with the operators $f_\alpha$ and $e_\alpha$.

\begin{lemma} \label{f_not_stable}
If $\left\langle wt(\delta),\alpha \right\rangle-{\boldsymbol{\mathfrak m}_\alpha^\delta} \geq 1$. Then $\Delta_s \notin R_\alpha(\delta)$ for all ${\boldsymbol{\mathfrak j}_\alpha^\delta} \leq s < {\boldsymbol{\mathfrak k}_\alpha^\delta}$.
\end{lemma}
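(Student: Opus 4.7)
I would argue by contradiction. Suppose that some index $s$ with $\boldsymbol{\mathfrak{j}}_\alpha^\delta \leq s < \boldsymbol{\mathfrak{k}}_\alpha^\delta$ lies in $R_{\alpha,n}(\delta)$ for some $n \in \IZ$, with associated indices $l := l_\alpha(\Delta_s)$ and $r := r_\alpha(\Delta_s)$, so that $l \leq s < r$, both $\Delta_l'$ and $\Delta_r'$ lie in $H_{\alpha,n}$, and no face $\Delta_t'$ with $l \leq t < r$ lies in $H_{\alpha,n-1}$. By the defining minimality of $\boldsymbol{\mathfrak{m}}_\alpha^\delta$ we have $n \geq \boldsymbol{\mathfrak{m}}_\alpha^\delta$. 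I would then carry out a case analysis on the value of $n$ and on whether $l \leq \boldsymbol{\mathfrak{j}}_\alpha^\delta$ or $l > \boldsymbol{\mathfrak{j}}_\alpha^\delta$.

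The case $n = \boldsymbol{\mathfrak{m}}_\alpha^\delta$ is ruled out immediately: $\Delta_r' \subset H_{\alpha,\boldsymbol{\mathfrak{m}}_\alpha^\delta}$ together with $r > s \geq \boldsymbol{\mathfrak{j}}_\alpha^\delta$ contradicts the maximality of $\boldsymbol{\mathfrak{j}}_\alpha^\delta$. Next, if $n = \boldsymbol{\mathfrak{m}}_\alpha^\delta + 1$ and $l \leq \boldsymbol{\mathfrak{j}}_\alpha^\delta$, then the face $\Delta_{\boldsymbol{\mathfrak{j}}_\alpha^\delta}' \subset H_{\alpha,\boldsymbol{\mathfrak{m}}_\alpha^\delta} = H_{\alpha,n-1}$ sits inside $[l, r-1]$, directly violating condition (iii) of $\alpha$-stability. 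If instead $n = \boldsymbol{\mathfrak{m}}_\alpha^\delta + 1$ and $l > \boldsymbol{\mathfrak{j}}_\alpha^\delta$, then $\Delta_l' \subset H_{\alpha,\boldsymbol{\mathfrak{m}}_\alpha^\delta+1}$ with $\boldsymbol{\mathfrak{j}}_\alpha^\delta < l \leq s < \boldsymbol{\mathfrak{k}}_\alpha^\delta$ contradicts the minimality of $\boldsymbol{\mathfrak{k}}_\alpha^\delta$.

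The remaining case $n \geq \boldsymbol{\mathfrak{m}}_\alpha^\delta + 2$ rests on the elementary but crucial observation that consecutive alcoves of the gallery differ by at most one strip with respect to any family of parallel $\alpha$-hyperplanes: passing from $\Delta_{t-1}$ to $\Delta_t$ either preserves the strip (if $\Delta_t'$ does not lie in any $H_{\alpha,k}$) or shifts it by exactly one (if $\Delta_t' \subset H_{\alpha,k}$ for some $k \in \IZ$). Consequently, between any two indices whose alcoves meet $\alpha$-hyperplanes at levels $a$ and $b$, every integer level strictly between $a$ and $b$ must itself be attained by some intermediate $\Delta_t'$. If $l \leq \boldsymbol{\mathfrak{j}}_\alpha^\delta$, this forces a face at level $n-1$ somewhere in $(l,\boldsymbol{\mathfrak{j}}_\alpha^\delta] \subset [l,r-1]$, again contradicting (iii); if $l > \boldsymbol{\mathfrak{j}}_\alpha^\delta$, it forces a face at level $\boldsymbol{\mathfrak{m}}_\alpha^\delta + 1$ in $(\boldsymbol{\mathfrak{j}}_\alpha^\delta, l] \subset (\boldsymbol{\mathfrak{j}}_\alpha^\delta, \boldsymbol{\mathfrak{k}}_\alpha^\delta)$, contradicting the minimality of $\boldsymbol{\mathfrak{k}}_\alpha^\delta$.

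The main obstacle is isolating this last continuity observation cleanly, since the other subcases reduce to one-line checks using only the extremality of $\boldsymbol{\mathfrak{j}}_\alpha^\delta$ and $\boldsymbol{\mathfrak{k}}_\alpha^\delta$; once the step-by-step shift property of the gallery is recorded, the whole proof is a two-parameter case split that exhausts all configurations $(n,l)$ consistent with $s$ being $\alpha$-stable inside $[\boldsymbol{\mathfrak{j}}_\alpha^\delta, \boldsymbol{\mathfrak{k}}_\alpha^\delta)$.
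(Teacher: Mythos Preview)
Your proof is correct and follows essentially the same approach as the paper: both argue by contradiction, split according to the level $n$ at which $\Delta_s$ would be $\alpha$-stable, and rely on the same ``continuity'' observation that the gallery can change its $\alpha$-level by at most one at each step. The paper treats all $n > \boldsymbol{\mathfrak m}_\alpha^\delta$ in one stroke (using the continuity implicitly to deduce first that $l > \boldsymbol{\mathfrak j}_\alpha^\delta$ and then that $\boldsymbol{\mathfrak k}_\alpha^\delta \leq l$), whereas you separate $n = \boldsymbol{\mathfrak m}_\alpha^\delta + 1$ from $n \geq \boldsymbol{\mathfrak m}_\alpha^\delta + 2$ and make the intermediate-value step explicit; this is only a difference in presentation, not in substance.
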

\begin{proof}
Let us first assume that $\Delta_s$ is $\alpha$-stable at $n > {\boldsymbol{\mathfrak m}_\alpha^\delta}$. Then there exists $l \leq s$, such that $\Delta_l' \subset H_{\alpha,n}$ and $\Delta_t' \not\subset H_{\alpha,n-1}$ for $l \leq t \leq s$, in particular it implies that $\Delta_t' \not\subset H_{\alpha,{\boldsymbol{\mathfrak m}_\alpha^\delta}}$ for $l \leq t \leq s$. Hence ${\boldsymbol{\mathfrak k}_\alpha^\delta} \leq l$ as both $l$ and ${\boldsymbol{\mathfrak k}_\alpha^\delta}$ are strictly bigger than $j$, but ${\boldsymbol{\mathfrak k}_\alpha^\delta}$ is minimal with the property that $\Delta_{\boldsymbol{\mathfrak k}_\alpha^\delta}' \subset H_{\alpha,m+1}$. But this would be a contradiction to $l \leq s$ as ${\boldsymbol{\mathfrak j}_\alpha^\delta} \leq s < {\boldsymbol{\mathfrak k}_\alpha^\delta} \leq l$.

The cases that $\Delta_s$ is $\alpha$ stable at $n < {\boldsymbol{\mathfrak m}_\alpha^\delta}$ or at ${\boldsymbol{\mathfrak m}_\alpha^\delta}$ are dealt with in a very similar and straightforward fashion.
\end{proof}

In other words, we have shown a bit more than stated in the lemma, we have shown that $[{\boldsymbol{\mathfrak j}_\alpha^\delta},{\boldsymbol{\mathfrak k}_\alpha^\delta}]$ is an $\alpha$-directed section of $\delta$. 

We now want to prove, that if we can apply the operator $f_\alpha$ to a gallery, an $\alpha$-stable alcove remains $\alpha$-stable and an alcove that is $\alpha$-stable afterwards was obtained from one.

\begin{proposition} \label{f_alpha_stable}
If $\left\langle wt(\delta),\alpha \right\rangle-{\boldsymbol{\mathfrak m}_\alpha^\delta} \geq 1$. Then for all $i$, it holds
$$i \in R_{\alpha}(\delta) \Longleftrightarrow i \in R_{\alpha}(f_\alpha\delta).$$
\end{proposition}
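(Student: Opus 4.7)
The plan is to case-split on the position of $i$ relative to the boundary indices ${\boldsymbol{\mathfrak j}_\alpha^\delta}$ and ${\boldsymbol{\mathfrak k}_\alpha^\delta}$, exploiting the fact that $f_\alpha$ acts piecewise by affine Weyl group elements: the identity on alcoves of index $<{\boldsymbol{\mathfrak j}_\alpha^\delta}$, the reflection $s_{\alpha,{\boldsymbol{\mathfrak m}_\alpha^\delta}}$ on those with index in $[{\boldsymbol{\mathfrak j}_\alpha^\delta},{\boldsymbol{\mathfrak k}_\alpha^\delta}-1]$, and the translation $t_{-\alpha^\vee}$ on those with index $\geq{\boldsymbol{\mathfrak k}_\alpha^\delta}$. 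Crucially, all three of these affine maps send the family of $\alpha$-hyperplanes to itself: $H_{\alpha,n}$ is mapped respectively to $H_{\alpha,n}$, $H_{\alpha,2{\boldsymbol{\mathfrak m}_\alpha^\delta}-n}$, and $H_{\alpha,n-2}$. Consequently the combinatorial data controlling $\alpha$-stability -- namely which hyperplane $H_{\alpha,n}$ contains a given small face $\Delta_s'$ -- is transported bijectively, with an explicit index shift on each region.

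For the forward implication, suppose $i\in R_\alpha(\delta)$ and let $m$ denote the minimal level witnessing this, so that $[l_\alpha(\Delta_i),r_\alpha(\Delta_i)-1]$ is the extremal stability interval. By Lemma~\ref{sec_stable} every index of this interval is $\alpha$-stable at the same level $m$, while Lemma~\ref{f_not_stable} forbids any $\alpha$-stable index in $[{\boldsymbol{\mathfrak j}_\alpha^\delta},{\boldsymbol{\mathfrak k}_\alpha^\delta}-1]$. Together these force the interval to lie entirely in one of the outer regions $[0,{\boldsymbol{\mathfrak j}_\alpha^\delta}-1]$ or $[{\boldsymbol{\mathfrak k}_\alpha^\delta},p]$ (the middle region being excluded for $i$ itself by Lemma~\ref{f_not_stable}). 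On either outer region $f_\alpha$ acts by a single affine isometry preserving the $\alpha$-hyperplanes, so transporting the triple $(l_\alpha(\Delta_i),r_\alpha(\Delta_i),m)$ produces a valid stability witness for $i$ in $f_\alpha\delta$ at level $m$ or $m-2$ respectively.

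For the reverse implication I would run the symmetric argument inside $f_\alpha\delta$. The one genuinely new input is the analogue of Lemma~\ref{f_not_stable} for $f_\alpha\delta$, namely that no $s\in[{\boldsymbol{\mathfrak j}_\alpha^\delta},{\boldsymbol{\mathfrak k}_\alpha^\delta}-1]$ gives rise to an $\alpha$-stable $\widetilde{\Delta_s}$ in $f_\alpha\delta$. This I would establish by the same piecewise analysis: a hypothetical stability witness for such an $s$, if contained in $[{\boldsymbol{\mathfrak j}_\alpha^\delta},{\boldsymbol{\mathfrak k}_\alpha^\delta}-1]$, reflects back under $s_{\alpha,{\boldsymbol{\mathfrak m}_\alpha^\delta}}$ to a stability witness for $s$ in $\delta$, contradicting Lemma~\ref{f_not_stable}; and any candidate interval that reaches into an outer region would, via the index shifts computed above, violate condition~(iii) of Definition~\ref{stable} together with the extremality defining ${\boldsymbol{\mathfrak j}_\alpha^\delta}$ and ${\boldsymbol{\mathfrak k}_\alpha^\delta}$. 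Once this analogue is in hand, Lemma~\ref{sec_stable} applied inside $f_\alpha\delta$ again confines the extremal witness to a single region, and inverting the corresponding piecewise transformation recovers a witness for $\delta$.

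The principal obstacle throughout is the possibility of witness intervals straddling the transformation boundaries at ${\boldsymbol{\mathfrak j}_\alpha^\delta}$ or ${\boldsymbol{\mathfrak k}_\alpha^\delta}$, where the piecewise description of $f_\alpha$ mixes a reflection with a translation and would not transport a witness cleanly. The strategy above circumvents this by combining Lemma~\ref{sec_stable} -- stability propagates along any candidate interval -- with Lemma~\ref{f_not_stable} and its $f_\alpha\delta$-counterpart, which block the boundary indices from being stable and thereby force each extremal witness interval to live entirely on one side of the cut, on which $f_\alpha$ is a single hyperplane-preserving affine isometry.
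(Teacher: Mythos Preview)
Your proposal is correct and follows essentially the same route as the paper: a three-way case split on the position of $i$ relative to ${\boldsymbol{\mathfrak j}_\alpha^\delta}$ and ${\boldsymbol{\mathfrak k}_\alpha^\delta}$, with Lemma~\ref{f_not_stable} handling the middle region and a transport-of-witness argument on the two outer regions where $f_\alpha$ acts by a single affine map preserving the family $\{H_{\alpha,n}\}$. The only stylistic difference is that you invoke Lemma~\ref{sec_stable} together with Lemma~\ref{f_not_stable} to confine the extremal witness interval to one side, whereas the paper argues this directly by a sub-case split on the stability level $n$ (namely $n={\boldsymbol{\mathfrak m}_\alpha^\delta}$ versus $n>{\boldsymbol{\mathfrak m}_\alpha^\delta}$); for the middle region in $f_\alpha\delta$ the paper's argument is also slightly more direct---it simply observes that the first $\alpha$-face after $\widetilde{\Delta_i'}$ lies at height ${\boldsymbol{\mathfrak m}_\alpha^\delta}-1$ while no face with index $\leq s$ does, so no left witness $l$ can exist---rather than reflecting a hypothetical witness back to $\delta$.
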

\begin{proof}
Since $f_\alpha\delta = (F_f \subset \widetilde{\Delta_0} \supset \widetilde{\Delta_1'} \subset \ldots \supset \widetilde{\Delta_p'} \subset \widetilde{\Delta_p} \supset \widetilde{\Delta_{p+1}'})$ is defined by the assumptions on ${\boldsymbol{\mathfrak m}_\alpha^\delta}$. We start with $i < {\boldsymbol{\mathfrak j}_\alpha^\delta}$.
\begin{enumerate}
\item Let $i \in R_{\alpha,n}(\delta)$. It is clear that $n \geq {\boldsymbol{\mathfrak m}_\alpha^\delta}$ and as $l_\alpha(\Delta_i) \leq i < {\boldsymbol{\mathfrak j}_\alpha^\delta}$ there are two possibilities. Either $n={\boldsymbol{\mathfrak m}_\alpha^\delta}$, then $r_\alpha(\Delta_i) \leq {\boldsymbol{\mathfrak j}_\alpha^\delta}$ as ${\boldsymbol{\mathfrak j}_\alpha^\delta}$ is maximal with the property that $\Delta_{\boldsymbol{\mathfrak j}_\alpha^\delta}' \subset H_{\alpha,{\boldsymbol{\mathfrak m}_\alpha^\delta}}$. Or on the other hand $n > {\boldsymbol{\mathfrak m}_\alpha^\delta}$, then $r_\alpha(\Delta_i) < {\boldsymbol{\mathfrak j}_\alpha^\delta}$ because of the property that $\Delta_t' \not\subset H_{\alpha,{\boldsymbol{\mathfrak m}_\alpha^\delta}-1}$ for all $l_\alpha(\Delta_i) \leq t < r_\alpha(\Delta_i)$.

Thus $\Delta_{l_\alpha(\Delta_i)}, \ldots, \Delta_{r_\alpha(\Delta_i)-1}$ are not changed by $f_\alpha$. As $\widetilde{\Delta_{\boldsymbol{\mathfrak k}_\alpha^\delta}'}$ is the only face with the property $ \widetilde{\Delta_{\boldsymbol{\mathfrak k}_\alpha^\delta}'} \subset H_{\alpha,{\boldsymbol{\mathfrak m}_\alpha^\delta}-1}$ and as no face between $\widetilde{\Delta_{\boldsymbol{\mathfrak j}_\alpha^\delta}'}$ and $\widetilde{\Delta_{\boldsymbol{\mathfrak k}_\alpha^\delta}'}$ lies in any $\alpha$ wall, we have that $\{ l_\alpha(\Delta_i), \ldots, r_\alpha(\Delta_i)-1 \} \subset R_{\alpha,n}(f_\alpha\delta)$ and especially $i \in R_{\alpha,n}(f_\alpha\delta)$.
\item If on the other hand $i \in R_{\alpha,n}(f_\alpha\delta)$ then $n \geq {\boldsymbol{\mathfrak m}_\alpha^\delta}$ and like in the previous case there exists $r_\alpha(\widetilde{\Delta_i})$ and $l_\alpha(\widetilde{\Delta_i})$ both lower or equal to ${\boldsymbol{\mathfrak j}_\alpha^\delta}$ as $\widetilde{\Delta_{\boldsymbol{\mathfrak k}_\alpha^\delta}'}$ is the only face that lies in $H_{\alpha,{\boldsymbol{\mathfrak m}_\alpha^\delta}-1}$. But this means that $\widetilde{\Delta_s}=\Delta_s$ for all $l_\alpha(\widetilde{\Delta_i}) \leq s < r_\alpha(\widetilde{\Delta_i})$ and thus $i \in R_{\alpha,n}(\delta)$.
\end{enumerate}

Next we look at ${\boldsymbol{\mathfrak j}_\alpha^\delta} \leq i < {\boldsymbol{\mathfrak k}_\alpha^\delta}$. By Lemma \ref{f_not_stable} it is already obvious that $i \notin R_\alpha(\delta)$, thus it remains to show $i \notin R_\alpha(f_\alpha\delta)$. This is essentially the same argument, we suppose that $i \in R_{\alpha,n}(f_\alpha\delta)$ for some $n \in \IZ$. Since the first face after $\widetilde{\Delta_i'}$ lying in an $\alpha$-wall is $\widetilde{\Delta_{\boldsymbol{\mathfrak k}_\alpha^\delta}'}$ it is clear that by condition (iii) of Definition \ref{stable} we have $n={\boldsymbol{\mathfrak m}_\alpha^\delta}-1$. But by definition of $f_\alpha$, $\widetilde{\Delta_{\boldsymbol{\mathfrak k}_\alpha^\delta}'}$ is the only face that lies in the $\alpha$-wall with height ${\boldsymbol{\mathfrak m}_\alpha^\delta}-1$ which is a contradiction to condition (ii) of Definition \ref{stable} as the index $l_\alpha(\widetilde{\Delta_i})$ cannot exist.

The case $i \geq {\boldsymbol{\mathfrak k}_\alpha^\delta}$ is proven in a very similar manner to the first part.
\end{proof}

\begin{remark} \label{e_alpha_stable}
The analog for $e_\alpha$ holds as well and is proven in the same way.
\end{remark}

It is easy to see that the condition for $f_\alpha$ to be defined, is related to the number of $\alpha$-directed sections.

\begin{proposition} \label{f_alpha_defined_directed}
It holds:
$$ f_\alpha\delta \text{ is defined } \Longleftrightarrow \text{ there exists an $\alpha$-directed section}. $$
\end{proposition}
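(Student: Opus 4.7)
The plan is to handle the two implications separately, both relying on the partition of $\delta$ into sections from Lemma \ref{partition} together with the ordering constraint in Remark \ref{order_directed}.

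For the forward direction, suppose $f_\alpha\delta$ is defined, i.e.\ $\left\langle wt(\delta), \alpha \right\rangle - {\boldsymbol{\mathfrak m}_\alpha^\delta} \geq 1$, so that the indices ${\boldsymbol{\mathfrak j}_\alpha^\delta}$ and ${\boldsymbol{\mathfrak k}_\alpha^\delta}$ both exist. The remark following Lemma \ref{f_not_stable} already observes that $[{\boldsymbol{\mathfrak j}_\alpha^\delta},{\boldsymbol{\mathfrak k}_\alpha^\delta}]$ is an $\alpha$-directed section at $m={\boldsymbol{\mathfrak m}_\alpha^\delta}$, and I would simply record the three checks required by Definition \ref{section}: maximality of ${\boldsymbol{\mathfrak j}_\alpha^\delta}$ handles the $H_{\alpha,m}$ condition, minimality of ${\boldsymbol{\mathfrak k}_\alpha^\delta}$ handles the $H_{\alpha,m+1}$ condition, and Lemma \ref{f_not_stable} delivers $k \notin R_\alpha(\delta)$ on the whole interval. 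The auxiliary fact that no intermediate face can lie on any other $\alpha$-hyperplane follows from Lemma \ref{sec_directed}: the first $\alpha$-crossing strictly after ${\boldsymbol{\mathfrak j}_\alpha^\delta}$ must occur at level ${\boldsymbol{\mathfrak m}_\alpha^\delta}\pm 1$, and the minus case is excluded by the minimality of ${\boldsymbol{\mathfrak m}_\alpha^\delta}$, so it must be at level ${\boldsymbol{\mathfrak m}_\alpha^\delta}+1$, which forces it to be ${\boldsymbol{\mathfrak k}_\alpha^\delta}$ itself.

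For the backward direction, suppose there is an $\alpha$-directed section $[i,j]$ at level $m$, so $\Delta_i' \subset H_{\alpha,m}$ and $\Delta_j' \subset H_{\alpha,m+1}$. Since $\Delta_i'$ realises level $m$, we already have ${\boldsymbol{\mathfrak m}_\alpha^\delta} \leq m$, so it suffices to prove $\left\langle wt(\delta),\alpha\right\rangle \geq m+1$. The idea is to inspect the tail of the partition of $\delta$ starting at position $j$: Remark \ref{order_directed} forbids any $(-\alpha)$-directed section after $[i,j]$, so every subsequent section of the partition is either $\alpha$-directed, which strictly raises the $\alpha$-level of the transition face from $m'$ to $m'+1$, or $\alpha$-stable at some level $m''$, whose starting and ending small faces both sit on the same hyperplane $H_{\alpha,m''}$. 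A short induction along this tail starting from level $m+1$ at position $j$ shows that $F_\mu$, identified with the final small face of the gallery, lies on some $H_{\alpha,n}$ with $n \geq m+1$, yielding $\left\langle wt(\delta),\alpha\right\rangle \geq m+1$ and hence $\left\langle wt(\delta),\alpha\right\rangle - {\boldsymbol{\mathfrak m}_\alpha^\delta} \geq 1$.

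The main obstacle is the bookkeeping in the backward direction. The subtle point is that an $\alpha$-stable section permits alcoves whose vertices touch the lower boundary hyperplane of its strip, so the $\alpha$-level of points in the apartment can appear to fluctuate; what remains rigidly controlled are the codimension-one transition faces $\Delta_s'$, and the definition of an $\alpha$-stable section pins these at the upper boundary hyperplane at both of the section's endpoints. Identifying $F_\mu$ with the final transition face of the terminal section, which lies on the upper hyperplane of that section, is exactly what rules out a spurious drop below level $m+1$ and closes the induction.
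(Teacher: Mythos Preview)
Your argument is correct and follows the same approach as the paper. The forward direction is identical (invoking Lemma~\ref{f_not_stable} to exhibit $[{\boldsymbol{\mathfrak j}_\alpha^\delta},{\boldsymbol{\mathfrak k}_\alpha^\delta}]$ as an $\alpha$-directed section). For the backward direction the paper is terser: it takes the \emph{first} $\alpha$-directed section $[s,t]$ and uses Remark~\ref{order_directed} to identify $s={\boldsymbol{\mathfrak j}_\alpha^\delta}$, $t={\boldsymbol{\mathfrak k}_\alpha^\delta}$, and its starting level with ${\boldsymbol{\mathfrak m}_\alpha^\delta}$ directly, whereas you take an arbitrary $\alpha$-directed section and track the tail of the partition to bound $\langle wt(\delta),\alpha\rangle$; both routes rest on the same ordering constraint from Remark~\ref{order_directed} and are equivalent in substance.
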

\begin{proof}
If $f_\alpha\delta$ is defined, then by Lemma \ref{f_not_stable}, the interval $[{\boldsymbol{\mathfrak j}_\alpha^\delta},{\boldsymbol{\mathfrak k}_\alpha^\delta}]$ is an $\alpha$-directed section.

On the other hand, let $[s,t]$ be the first $\alpha$-directed section and $\Delta_s' \subset H_{\alpha,n}$, then by Remark \ref{order_directed}, it is easy to see that ${\boldsymbol{\mathfrak m}_\alpha^\delta}=n$, ${\boldsymbol{\mathfrak j}_\alpha^\delta}=s$, and ${\boldsymbol{\mathfrak k}_\alpha^\delta}=t$. Since $[s,t]$ is $\alpha$-directed, we also know that $\left\langle wt(\delta),\alpha \right\rangle > {\boldsymbol{\mathfrak m}_\alpha^\delta}$. Hence $f_\alpha$ is defined.
\end{proof}

\begin{remark}
As one can also see in the proof of Proposition \ref{f_alpha_defined_directed}, if $f_\alpha$ is defined it will reflect the first $\alpha$-directed section at the wall orthogonal to $\alpha$ of minimal height and thus produce a $(-\alpha)$-directed section at that part of the gallery and translate or leave invariant the rest.
\end{remark}

\begin{proposition} \label{e_alpha_defined_directed}
It holds:
$$ e_\alpha\delta \text{ is defined } \Longleftrightarrow \text{ there exists an $(-\alpha)$-directed section}. $$
\end{proposition}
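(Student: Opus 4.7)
The plan is to mirror the proof of Proposition \ref{f_alpha_defined_directed}, replacing $\alpha$-directed sections by $(-\alpha)$-directed ones and $f_\alpha$ by $e_\alpha$.

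For the forward direction, I would assume $e_\alpha \delta$ is defined, so that ${\boldsymbol{\mathfrak m}_\alpha^\delta} \leq -1$ and the indices ${\boldsymbol{\mathfrak s}_\alpha^\delta} \leq {\boldsymbol{\mathfrak t}_\alpha^\delta}$ from the definition exist (and in fact strictly, since a single face cannot lie in two distinct parallel hyperplanes). The claim is that the interval $[{\boldsymbol{\mathfrak s}_\alpha^\delta}, {\boldsymbol{\mathfrak t}_\alpha^\delta}]$ is a $(-\alpha)$-directed section with parameter $m = {\boldsymbol{\mathfrak m}_\alpha^\delta} + 1$. The two inclusion conditions of Definition \ref{section} follow immediately from the maximality of ${\boldsymbol{\mathfrak s}_\alpha^\delta}$ (no face $\Delta_k'$ with ${\boldsymbol{\mathfrak s}_\alpha^\delta} < k \leq {\boldsymbol{\mathfrak t}_\alpha^\delta} - 1$ lies in $H_{\alpha, {\boldsymbol{\mathfrak m}_\alpha^\delta}+1}$) and the minimality of ${\boldsymbol{\mathfrak t}_\alpha^\delta}$ (no $\Delta_k'$ for $k < {\boldsymbol{\mathfrak t}_\alpha^\delta}$ lies in $H_{\alpha, {\boldsymbol{\mathfrak m}_\alpha^\delta}}$). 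The remaining requirement that $k \notin R_\alpha(\delta)$ for ${\boldsymbol{\mathfrak s}_\alpha^\delta} \leq k < {\boldsymbol{\mathfrak t}_\alpha^\delta}$ is the exact analog for $e_\alpha$ of Lemma \ref{f_not_stable} (cf.~Remark \ref{e_alpha_stable}); it is proved by the same case distinction on a hypothetical stability level $n$, each case producing a contradiction with the extremal choice of either ${\boldsymbol{\mathfrak s}_\alpha^\delta}$ or ${\boldsymbol{\mathfrak t}_\alpha^\delta}$.

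For the converse, I would assume that $\delta$ contains at least one $(-\alpha)$-directed section, and let $[s, t]$ be the first such section in the partition of Lemma \ref{partition}, with parameter $m$, so that $\Delta_s' \subset H_{\alpha, m}$ and $\Delta_t' \subset H_{\alpha, m-1}$. By Remark \ref{order_directed}, every section of the partition strictly preceding $[s, t]$ is $\alpha$-stable. One checks directly from Definitions \ref{stable} and \ref{section} that the alcoves $\Delta_{l-1}$ and $\Delta_r$ immediately before and after an $\alpha$-stable section at level $n$ lie on the same side of $H_{\alpha, n}$, since no intermediate face of the section belongs to $H_{\alpha, n}$ (by maximality of $l_\alpha$ and minimality of $r_\alpha$) or to $H_{\alpha, n-1}$ (by condition (iii) of Definition \ref{stable}). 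Consequently the relative position of the gallery with respect to $H_{\alpha, 0}$ is unchanged across each preceding stable section. Since $\Delta_0' = F_f = \{0\} \subset H_{\alpha, 0}$ and $\Delta_0$ equals the fundamental alcove $\Delta_f$, which lies on the positive side of $H_{\alpha, 0}$, the alcove $\Delta_{s-1}$ is still on that positive side; it follows that $m = 0$, whence $\Delta_t' \subset H_{\alpha, -1}$ and ${\boldsymbol{\mathfrak m}_\alpha^\delta} \leq -1$, so $e_\alpha \delta$ is defined.

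The main obstacle I expect is the converse direction, specifically the claim that the very first $(-\alpha)$-directed section must have parameter $m = 0$. The underlying geometric fact, that an $\alpha$-stable section preserves the side of $H_{\alpha, n}$ on which the gallery sits, is clear from the definitions but is not recorded as a standalone lemma earlier in the paper, so it must be extracted carefully before the argument can be concluded. Aside from this, every other step parallels Proposition \ref{f_alpha_defined_directed} line for line.
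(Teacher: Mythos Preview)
Your forward direction is correct and is precisely what the paper intends (its proof reads in full: ``analogous to the one of \ref{f_alpha_defined_directed}'').

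In the converse direction there is one genuine slip. You assert that $\Delta_0$ equals the fundamental alcove $\Delta_f$ and hence lies on the positive side of $H_{\alpha,0}$. This is false in general: for $\delta=[\delta_0,\ldots,\delta_p]$ one has $\Delta_0=\delta_0\Delta_f$ with $\delta_0\in W$ arbitrary, so $\Delta_0$ may well sit on the negative side of $H_{\alpha,0}$. Your ``side-preservation across stable sections'' argument therefore does not, as written, force $m=0$.

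The repair is shorter than the detour you attempted. By the remark immediately following Lemma~\ref{partition}, two consecutive sections of the partition are never both $\alpha$-stable. Combined with your (correct) observation from Remark~\ref{order_directed} that everything preceding the first $(-\alpha)$-directed section $[s,t]$ is $\alpha$-stable, there is \emph{at most one} such predecessor, namely $[0,s]$. Since $\Delta_0'=\{0\}\subset H_{\alpha,0}$, that stable section is at level~$0$, so its right endpoint satisfies $\Delta_s'\subset H_{\alpha,0}$; hence $m=0$ and $\Delta_t'\subset H_{\alpha,-1}$ as you wanted. Alternatively, the literal mirror of Proposition~\ref{f_alpha_defined_directed} takes the \emph{last} $(-\alpha)$-directed section and identifies its parameter with ${\boldsymbol{\mathfrak m}_\alpha^\delta}+1$ via Remark~\ref{order_directed}; this bypasses the issue entirely and is presumably what the paper has in mind.
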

\begin{proof}
The proof is ananlogous to the one of \ref{f_alpha_defined_directed}.
\end{proof}

Combining these two propositions we obtain the following relation between the partitions and the operators $e_\alpha$ and $f_\alpha$.

\begin{theorem} \label{op_theorem}
Let $\delta \in \Gamma_{LS}^+(\gamma_\lambda)$ and $i_1 < i_2 < \ldots i_k$ the corresponding partition into $\alpha$-directed, $(-\alpha)$-directed, and $\alpha$-stable sections. Then the following holds:

\textbf{If $e_\alpha\delta$ is defined:} Let $[i_s,i_{s+1}]$ be the last $(-\alpha)$-directed section of $\delta$. Then $e_\alpha\delta$ also has the partition $i_1 < i_2 < \ldots < i_k$ and all sections are of the same type as before, except that $[i_s,i_{s+1}]$ is an $\alpha$-directed section for $e_\alpha \delta$.
Furthermore, if $\Delta_{i_{s+1}+1}' \subset H_{\alpha,{\boldsymbol{\mathfrak m}_\alpha^\delta}}$ and $e_\alpha\delta = (\widetilde{\Delta_0'} \subset \widetilde{\Delta_0} \supset \widetilde{\Delta_1'} \subset \ldots \supset \widetilde{\Delta_p'} \subset \widetilde{\Delta_p} \supset \widetilde{\Delta_{p+1}'} )$, then 
\[ \widetilde{\Delta_j} = \left\{\begin{array}{ll}
\Delta_j & \text{ if } j < i_s, \\
s_{\alpha,{\boldsymbol{\mathfrak m}_\alpha^\delta}}\Delta_j & \text{ if } i_s \leq j \leq i_{s+1}, \\
t_{\alpha^\vee}\Delta_j & j > i_{s+1}.
\end{array} \right. \]
Furthermore $\varepsilon_\alpha(\delta)= \#\{ (-\alpha)-\text{directed sections} \}$.

\textbf{If $f_\alpha\delta$ is defined:} Let $[i_s,i_{s+1}]$ be the first $\alpha$-directed section of $\delta$. Then $f_\alpha\delta$ also has the partition $i_1 < i_2 < \ldots < i_k$ and all sections are of the same type as before, except that $[i_s,i_{s+1}]$ is now a $(-\alpha)$-directed section for $f_\alpha \delta$.
Furthermore, if $\Delta_{i_s+1}' \subset H_{\alpha,{\boldsymbol{\mathfrak m}_\alpha^\delta}}$ and $f_\alpha\delta = (\widetilde{\Delta_0'} \subset \widetilde{\Delta_0} \supset \widetilde{\Delta_1'} \subset \ldots \supset \widetilde{\Delta_p'} \subset \widetilde{\Delta_p} \supset \widetilde{\Delta_{p+1}'} )$, then 
\[ \widetilde{\Delta_j} = \left\{\begin{array}{ll}
\Delta_j & \text{ if } j < i_s, \\
s_{\alpha,{\boldsymbol{\mathfrak m}_\alpha^\delta}}\Delta_j & \text{ if } i_s \leq j \leq i_{s+1}, \\
t_{-\alpha^\vee}\Delta_j & j > i_{s+1}.
\end{array} \right. \]
Furthermore $\varphi_\alpha(\delta)= \#\{ \alpha-\text{directed sections} \}$. 
\end{theorem}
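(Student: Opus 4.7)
The two cases are symmetric under interchanging $\alpha \leftrightarrow -\alpha$ and $e_\alpha \leftrightarrow f_\alpha$, so I plan to carry out the $f_\alpha$ case in detail and then invoke Remark \ref{e_alpha_stable} together with the analogue of Proposition \ref{f_alpha_defined_directed} (namely Proposition \ref{e_alpha_defined_directed}) for $e_\alpha$. The strategy combines three ingredients already at hand: the explicit formula in the definition of $f_\alpha$, the preservation of $\alpha$-stability under $f_\alpha$ (Proposition \ref{f_alpha_stable}), and the uniqueness of the partition from Lemma \ref{partition}.

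First I would identify the section on which $f_\alpha$ acts nontrivially. By Lemma \ref{f_not_stable} the indices ${\boldsymbol{\mathfrak j}_\alpha^\delta}, \ldots, {\boldsymbol{\mathfrak k}_\alpha^\delta}-1$ are not $\alpha$-stable, and by the proof of Proposition \ref{f_alpha_defined_directed} combined with Remark \ref{order_directed} the interval $[{\boldsymbol{\mathfrak j}_\alpha^\delta},{\boldsymbol{\mathfrak k}_\alpha^\delta}]$ is precisely the first $\alpha$-directed section of $\delta$, which is $[i_s,i_{s+1}]$. The displayed piecewise formula for $\widetilde{\Delta_j}$ then is a direct rewrite of the defining formula of $f_\alpha$ under the identifications $i_s={\boldsymbol{\mathfrak j}_\alpha^\delta}$, $i_{s+1}={\boldsymbol{\mathfrak k}_\alpha^\delta}$.

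Next I would verify that the same indices $i_1 < \cdots < i_k$ still form the partition of $f_\alpha\delta$ and that only the $s$-th section changes type. For $j < i_s$ nothing changes, so the partition and section types on the left are preserved verbatim. For $j > i_{s+1}$ all alcoves are translated uniformly by $-\alpha^\vee$; since this translation sends $H_{\alpha,m}$ to $H_{\alpha,m-1}$ and fixes every $\beta$-hyperplane for any other simple $\beta$, the relative combinatorics encoded in Definition \ref{section} is preserved, yielding that $\alpha$-stable sections remain $\alpha$-stable (cf. Proposition \ref{f_alpha_stable}), $\alpha$-directed sections remain $\alpha$-directed, and $(-\alpha)$-directed sections remain $(-\alpha)$-directed. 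On the middle section the reflection $s_{\alpha,{\boldsymbol{\mathfrak m}_\alpha^\delta}}$ keeps $\widetilde{\Delta_{i_s}'} \subset H_{\alpha,{\boldsymbol{\mathfrak m}_\alpha^\delta}}$ and sends $\Delta_{i_{s+1}}' \subset H_{\alpha,{\boldsymbol{\mathfrak m}_\alpha^\delta}+1}$ to $\widetilde{\Delta_{i_{s+1}}'} \subset H_{\alpha,{\boldsymbol{\mathfrak m}_\alpha^\delta}-1}$, with no intermediate face hitting any $\alpha$-hyperplane (by the $\alpha$-directed condition on the original section). This is precisely what is required for $[i_s,i_{s+1}]$ to become a $(-\alpha)$-directed section of $f_\alpha\delta$.

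The main obstacle I anticipate is showing that the partition does not collapse at the two boundaries of the transformed section, i.e., that the new $(-\alpha)$-directed section $[i_s,i_{s+1}]$ does not merge with a neighbouring stable or directed section. At $j=i_{s+1}$ the face $\widetilde{\Delta_{i_{s+1}}'}$ lies in $H_{\alpha,{\boldsymbol{\mathfrak m}_\alpha^\delta}-1}$, which by the uniform translation is also the height of the $\alpha$-walls bounding the next section to the right, so this index remains a genuine partition point. At $j=i_s$ the neighbour on the left (stable or $(-\alpha)$-directed by Remark \ref{order_directed}) still ends on $H_{\alpha,{\boldsymbol{\mathfrak m}_\alpha^\delta}}$, again preserving the boundary. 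Once this is settled, the count follows by iteration: by Proposition \ref{f_alpha_defined_directed} the operator $f_\alpha$ is defined as long as an $\alpha$-directed section exists, and each application converts exactly the first such section into a $(-\alpha)$-directed section without creating new ones; hence $\varphi_\alpha(\delta)$ equals the number of $\alpha$-directed sections in $\delta$, and the symmetric argument yields $\varepsilon_\alpha(\delta)=\#\{(-\alpha)\text{-directed sections}\}$.
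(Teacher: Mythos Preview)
Your argument is correct and follows exactly the route the paper intends: the theorem is stated there without proof, as an immediate consequence of Lemma~\ref{f_not_stable}, Proposition~\ref{f_alpha_stable}, Propositions~\ref{f_alpha_defined_directed}--\ref{e_alpha_defined_directed}, and Remark~\ref{order_directed}, and you have simply spelled out those deductions. One small inaccuracy worth fixing: translation by $-\alpha^\vee$ sends $H_{\alpha,m}$ to $H_{\alpha,m-2}$ (since $\langle \alpha,\alpha^\vee\rangle=2$) and does \emph{not} in general fix $H_{\beta,m}$ for other simple $\beta$; this does not affect your argument, since the section partition is defined purely in terms of the relative positions with respect to $\alpha$-hyperplanes, which any translation preserves.
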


\begin{definition}
The \textit{flipping} $(\delta)_{-\alpha}$ with respect to $\alpha$ of $\delta$ is defined as the gallery $(\widetilde{\Delta_0'} \subset \widetilde{\Delta_0} \supset \widetilde{\Delta_1'} \subset \ldots \supset \widetilde{\Delta_p'} \subset \widetilde{\Delta_p} \supset \widetilde{\Delta_{p+1}'} )$:
$$\widetilde{\Delta_r} = \left\{\begin{array}{ll}
s_{\alpha,m}\Delta_r & \text{ if } \Delta_r \in R_{\alpha,m}(\delta), \\
\Delta_r & \text{ otherwise}.
\end{array} \right. $$
This produces a well defined gallery by the definition of stableness in \ref{stable} and Lemma \ref{sec_stable}.
\end{definition}

It should be noted that, in general, when applying the flipping operator $(\cdot)_{-\alpha}$ to a gallery in $\Gamma_{LS}^+(\gamma_\lambda)$, the new gallery is not an LS-gallery, neither for $\mathfrak C^{\infty}_{w_0}$ nor for $\mathfrak C^{\infty}_{w_0s_\alpha}$. We need the following special situation.

\begin{theorem} \label{f_e_corres}
$(f_\alpha^{\max}(\delta))_{-\alpha}=s_\alpha.e_\alpha^{\max}(\delta)$, where $s_\alpha$ operates on the \break gallery by using the identification $\ka = X^\vee \otimes \IR$.
\end{theorem}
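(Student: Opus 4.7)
My plan is to verify the equality by an explicit section-by-section comparison, after reducing to the one-dimensional $\alpha$-direction.

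First I would observe that each of the operators $f_\alpha$, $e_\alpha$, $s_\alpha$, and $(\cdot)_{-\alpha}$ is composed of affine reflections $s_{\alpha,m}$ and translations by integer multiples of $\alpha^\vee$. All of these fix the quotient $\ka / \IR\alpha^\vee$ pointwise, so the two galleries $(f_\alpha^{\max}(\delta))_{-\alpha}$ and $s_\alpha \cdot e_\alpha^{\max}(\delta)$ have the same image as $\delta$ in $(\alpha^\vee)^\perp$. Consequently the equality reduces to a comparison of $\alpha$-coordinates, alcove by alcove.

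Next I would exploit the partition of $\delta$ into $\alpha$-directed, $(-\alpha)$-directed, and $\alpha$-stable sections given by Lemma \ref{partition}, together with the ordering constraint of Remark \ref{order_directed} (no $(-\alpha)$-directed section after an $\alpha$-directed one). By iterating Theorem \ref{op_theorem}, the positions $i_1 < \cdots < i_k$ of the partition are preserved under both $f_\alpha^{\max}$ and $e_\alpha^{\max}$: every $\alpha$-directed section of $\delta$ becomes $(-\alpha)$-directed in $f_\alpha^{\max}(\delta)$, every $(-\alpha)$-directed section becomes $\alpha$-directed in $e_\alpha^{\max}(\delta)$, and by Proposition \ref{f_alpha_stable} together with Remark \ref{e_alpha_stable} the $\alpha$-stable sections remain $\alpha$-stable in both.

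For a fixed section $T_l = [i_l, i_{l+1}]$, let $a$ and $b$ denote the numbers of $(-\alpha)$-directed and $\alpha$-directed sections of $\delta$ strictly before $T_l$. A direct bookkeeping using Theorem \ref{op_theorem} shows that the $\alpha$-height at the start of $T_l$ equals $b - a$ in $\delta$, equals $-(a+b)$ in $f_\alpha^{\max}(\delta)$, and equals $a+b$ in $e_\alpha^{\max}(\delta)$, since each preceding $\alpha$-directed section contributes a translation of $-\alpha^\vee$ to $T_l$ under $f_\alpha^{\max}$ and each preceding $(-\alpha)$-directed section contributes $+\alpha^\vee$ under $e_\alpha^{\max}$. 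For a directed section this closes the check: both sides yield a $(-\alpha)$-directed section starting at $-(a+b)$. For a stable section with wander profile $p$ satisfying $p(0) = p(\text{end}) = b - a$, the translations produce the profile $p - 2b$ in $f_\alpha^{\max}(\delta)$, which the flipping $s_{\alpha, -(a+b)}$ sends to $-p - 2a$; the profile $p + 2a$ in $e_\alpha^{\max}(\delta)$, after the global reflection $s_\alpha = s_{\alpha,0}$, likewise becomes $-p - 2a$.

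The hard part is precisely this bookkeeping of accumulated translations: I need to confirm that the number of translations hitting any section $T_l$ during the iteration of $f_\alpha$ (respectively $e_\alpha$) is exactly the number $b$ (respectively $a$) of preceding directed sections of the matching sign. This is ensured by Remark \ref{order_directed}: since all $(-\alpha)$-directed sections precede all $\alpha$-directed ones, the iteration of $f_\alpha$ (flipping $\alpha$-directed sections from left to right) and of $e_\alpha$ (flipping $(-\alpha)$-directed sections from right to left) act on disjoint parts of the gallery relative to $T_l$, so the counts $a$ and $b$ remain valid across every step and the two reflection prescriptions, local at $H_{\alpha, -(a+b)}$ versus global at $H_{\alpha, 0}$, produce identical images.
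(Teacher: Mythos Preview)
Your proposal is correct and takes essentially the same approach as the paper: the paper's proof is the single sentence ``This follows immediately from Theorem~\ref{op_theorem},'' and what you have written is precisely the section-by-section bookkeeping (using Remark~\ref{order_directed}, Proposition~\ref{f_alpha_stable}, and Remark~\ref{e_alpha_stable}) that this appeal entails.
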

\begin{proof}
This follows immediately from Theorem \ref{op_theorem}.
\end{proof}

\begin{remark}
This implies that $(f_\alpha^{\max}(\delta))_{-\alpha} \in \Gamma_{LS}^{s_\alpha}(\gamma_\lambda)$.
\end{remark}

\begin{definition} \label{xiidef}
Let $w \in W$ and $w^{\underline{i}} = s_{i_1} \ldots s_{i_n}$ a reduced decomposition of $w$ and $w_k^{\underline{i}}=s_{i_1} \ldots s_{i_k}$ for $0 \leq k \leq n$, we define the following series of galleries for $1 \leq k \leq n$
$$\delta_0^{\underline{i}} := \delta \text{ and } \delta_k^{\underline{i}} := w_k^{\underline{i}}\left(e_{\alpha_{i_k}}^{\max}((w_{k-1}^{\underline{i}})^{-1}\delta_{k-1}^{\underline{i}})\right) = w_{k-1}^{\underline{i}}\left(f_{\alpha_{i_k}}^{\max}((w_{k-1}^{\underline{i}})^{-1}\delta_{k-1}^{\underline{i}})\right)_{-\alpha_{i_k}}.$$
We define $\Xi_w^{\underline{i}}(\delta):=\delta_n^{\underline{i}}$.
\end{definition}

This definition is independent of the choice of the reduced decomposition, as shown by the following proposition.

\begin{proposition} \label{xi_independent}
Let $w$, $w_k^{\underline{i}}$, and $\delta_k^{\underline{i}}$ be as in Definition \ref{xiidef}. We also set $\widetilde{\delta}_0^{\underline{i}}:=\delta$ and $\widetilde{\delta}_k^{\underline{i}}:=e_{\alpha_{i_k}}^{\max}(\widetilde{\delta}_{k-1}^{\underline{i}})$ for $1 \leq k \leq n$. Then it holds:
\begin{enumerate}
\item $\delta_l^{\underline{i}}=w_l^{\underline{i}}.\widetilde{\delta}_l^{\underline{i}}$ for $0 \leq l \leq n$,
\item $\widetilde{\delta}_n^{\underline{i}}$ is independent of the choice of the reduced decomposition.
\end{enumerate}
\end{proposition}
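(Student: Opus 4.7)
The proof naturally divides along the two assertions, with (ii) containing essentially all the substance.

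For part (i), the plan is a short induction on $l$. The base case $l=0$ is immediate as both sides equal $\delta$. For the inductive step, the hypothesis $\delta_{k-1}^{\underline{i}} = w_{k-1}^{\underline{i}}.\widetilde{\delta}_{k-1}^{\underline{i}}$ rearranges to $(w_{k-1}^{\underline{i}})^{-1}\delta_{k-1}^{\underline{i}} = \widetilde{\delta}_{k-1}^{\underline{i}}$, which when substituted into the first expression of Definition \ref{xiidef} yields
\[
\delta_k^{\underline{i}} \;=\; w_k^{\underline{i}}\,e_{\alpha_{i_k}}^{\max}(\widetilde{\delta}_{k-1}^{\underline{i}}) \;=\; w_k^{\underline{i}}.\widetilde{\delta}_k^{\underline{i}}.
\]
For completeness, the consistency of the two formulas in Definition \ref{xiidef} reduces, after cancelling the common prefix $w_{k-1}^{\underline{i}}$, to the identity $s_{i_k}\,e_{\alpha_{i_k}}^{\max}(\eta) = (f_{\alpha_{i_k}}^{\max}(\eta))_{-\alpha_{i_k}}$, which is exactly Theorem \ref{f_e_corres}.

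For part (ii), I plan to appeal first to Matsumoto's theorem: any two reduced expressions for $w$ are connected by a chain of braid moves, so it suffices to verify invariance of $\widetilde{\delta}_n^{\underline{i}}$ under a single such move. Since the $e^{\max}$-operators outside the braided positions are unaffected on both sides, the statement reduces to the rank-two identity
\[
\underbrace{e_{\alpha_a}^{\max}\,e_{\alpha_b}^{\max}\,\cdots}_{m_{ab}\text{ factors}}(\eta) \;=\; \underbrace{e_{\alpha_b}^{\max}\,e_{\alpha_a}^{\max}\,\cdots}_{m_{ab}\text{ factors}}(\eta)
\]
for every LS-gallery $\eta$ and every pair of simple roots $\alpha_a,\alpha_b$, where $m_{ab}$ denotes the order of $s_a s_b$.

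The hard part will be this rank-two braid identity. My strategy is to identify both sides with a single intrinsic element of the rank-two sub-crystal through $\eta$ generated by $e_a,f_a,e_b,f_b$. By \cite{GauLit}, the LS-gallery model realises a normal crystal of the irreducible $G^\vee$-module $V(\lambda)$, so this sub-crystal is a normal crystal for the rank-two Levi corresponding to $\{\alpha_a,\alpha_b\}$. In each of the four possible rank-two root systems $A_1\times A_1$, $A_2$, $B_2$, $G_2$, a reduced expression of the rank-two longest element has exactly $m_{ab}$ letters, and a direct case-by-case verification shows that the $e^{\max}$-composition along such a reduced word lands at the unique highest-weight element of the connected component of $\eta$ inside the rank-two sub-crystal. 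Since that highest-weight element depends only on $\eta$ and not on the order of the factors, the two sides agree, which closes (ii).
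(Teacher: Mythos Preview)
Your argument for part (i) is correct and matches the paper's, which also proceeds by induction on $l$ with the base case $l=0$ given by definition.

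For part (ii), the paper does not argue at all: it simply invokes \cite{Lit}, where Littelmann establishes precisely that $e_{\alpha_{i_n}}^{\max}\cdots e_{\alpha_{i_1}}^{\max}(\delta)$ depends only on $w$ and not on the chosen reduced word. Your proposal instead sketches a direct proof of this fact via Matsumoto's lemma and reduction to the rank-two case, which is exactly the standard route to Littelmann's result. So your approach is not genuinely different---it is an outline of the content hidden behind the citation. One caution: the phrase ``a direct case-by-case verification'' for the rank-two identity is doing real work, and in types $B_2$ and especially $G_2$ this is not entirely trivial by hand; if you want a self-contained argument you should either carry this out carefully or appeal to a structural fact (e.g.\ that in a normal crystal the $e^{\max}$-string along any reduced word for the longest element reaches the highest-weight vertex, which follows from the compatibility of the crystal with Demazure/Weyl-group combinatorics). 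As written, your proof is correct in outline but relies on a claim that is itself the substance of the cited reference.
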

\begin{proof}
Part (i) is shown by a simple induction on $l$, being true for $l=0$ by definition.

Part (ii) follows immediately from \cite{Lit}.
\end{proof}

Thus the operators $\Xi_w^{\underline{i}}$ are independent of the reduced decomposition.

\begin{definition} \label{xi_def}
For $w \in W$, we define the \textit{vertex gallery} $\Xi_w(\delta)$ of $\delta$ with respect to $w$ to be $\Xi_w^{\underline{i}}(\delta)$ for an arbitrary reduced decomposition $\underline{i}$ of $w$.
\end{definition}

An easy consequence of Proposition \ref{xi_independent} is the following.

\begin{corollary}
The gallery $\Xi_w(\delta) \in \Gamma_{LS}^w(\gamma_\lambda)$ for all $w \in W$.
\end{corollary}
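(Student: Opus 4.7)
The plan is to combine part (i) of Proposition \ref{xi_independent} with the remark that the root operators preserve LS-galleries, and then transport the conclusion by the $W$-action on the apartment. Choosing any reduced decomposition $\underline{i}=(i_1,\ldots,i_n)$ of $w$, one has $\Xi_w(\delta)=\delta_n^{\underline{i}}=w\cdot\widetilde{\delta}_n^{\underline{i}}$ by Proposition \ref{xi_independent}(i) applied at $l=n$, where $\widetilde{\delta}_n^{\underline{i}}$ is obtained from $\delta$ by iterated application of the operators $e_{\alpha_{i_k}}^{\max}$. So the corollary splits into two separate assertions that I would address in sequence: first that $\widetilde{\delta}_n^{\underline{i}}\in\Gamma_{LS}^+(\gamma_\lambda)$, and second that applying $w$ transports this to $\Gamma_{LS}^w(\gamma_\lambda)$.

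For the first assertion I would induct on $k$, with base case $\widetilde{\delta}_0^{\underline{i}}=\delta\in\Gamma_{LS}^+(\gamma_\lambda)$ by hypothesis. For the inductive step, the remark stated after the definition of the root operators records that whenever $e_\alpha$ is defined its image is again an LS-gallery; applying this repeatedly $\varepsilon_{\alpha_{i_k}}(\widetilde{\delta}_{k-1}^{\underline{i}})$ many times yields $\widetilde{\delta}_k^{\underline{i}}=e_{\alpha_{i_k}}^{\max}(\widetilde{\delta}_{k-1}^{\underline{i}})\in\Gamma_{LS}^+(\gamma_\lambda)$. Hence $\widetilde{\delta}_n^{\underline{i}}\in\Gamma_{LS}^+(\gamma_\lambda)$.

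For the second assertion I would argue that left translation by $w\in W$ on $\ka$ induces a bijection $\Gamma_{LS}^+(\gamma_\lambda)\xrightarrow{\sim}\Gamma_{LS}^w(\gamma_\lambda)$. Indeed, $W\subset W^\mathfrak{a}$ acts by simplicial isomorphisms of the apartment preserving the type of every face, so $w$ preserves $\type(\gamma_\lambda)$. Moreover, the notions of positive folding and load-bearing wall in Definitions \ref{positive_gallery} and \ref{LS_gallery} are defined intrinsically in terms of hyperplane separation from the distinguished chamber, and $w$ sends $\mathfrak{C}_{-f}$ to $w\mathfrak{C}_{-f}$; consequently separation from $\mathfrak{C}_{-f}$ transforms equivariantly into separation from $w\mathfrak{C}_{-f}$. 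This gives a bijection between load-bearing walls of $\gamma$ with respect to $\mathfrak{C}_{-f}$ and those of $w\cdot\gamma$ with respect to $w\mathfrak{C}_{-f}$, so the dimension is preserved; thus the LS-condition passes through. Applying this to $\widetilde{\delta}_n^{\underline{i}}$ gives $\Xi_w(\delta)=w\cdot\widetilde{\delta}_n^{\underline{i}}\in\Gamma_{LS}^w(\gamma_\lambda)$.

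I do not expect a real obstacle; the only point that requires care is the $W$-equivariance in the last paragraph, and this is essentially a formality since all the relevant notions are phrased with respect to a single chamber that transforms in the obvious way. The substantive combinatorial content of the construction of $\Xi_w(\delta)$ has already been carried out in Section \ref{section_5} (in particular in Theorem \ref{op_theorem} and Theorem \ref{f_e_corres}) and packaged into Proposition \ref{xi_independent}, which is the key input we exploit here.
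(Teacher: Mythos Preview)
Your proof is correct and matches what the paper intends: the paper states only that the corollary is ``an easy consequence of Proposition~\ref{xi_independent}'' without further details, and you have filled in precisely the two steps that make this consequence explicit---namely that $\widetilde{\delta}_n^{\underline{i}}\in\Gamma_{LS}^+(\gamma_\lambda)$ because the root operators preserve LS-galleries, and that the $W$-action transports $\Gamma_{LS}^+(\gamma_\lambda)$ to $\Gamma_{LS}^w(\gamma_\lambda)$ since all the defining notions (type, positive folding, load-bearing wall) are phrased relative to a chamber that $w$ carries along.
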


\begin{remark} \label{recursive_xi}
To write this in a more convenient way, we define for $\delta \in \Gamma_{LS}^+(\gamma_\lambda)$ and a reduced expression $w=s_{i_1} \ldots s_{i_r}$, $e_w^{\rm max}(\delta):=e_{\alpha_{i_r}}^{\rm max} \cdots e_{\alpha_{i_1}}^{\rm max}(\delta)$, which is independent by Proposition \ref{xi_independent}. Thus we have
$$ \Xi_w(\delta) = w e_w^{\rm max}(\delta). $$
This implies immediately that for $w \in W$ and $\alpha \in \Phi^+$, such that $l(ws_\alpha)>l(w)$, it holds:
$$ \Xi_{ws_\alpha}(\delta) = \Xi_{s_{w\alpha}}(\Xi_w(\delta)). $$
\end{remark}

This is the reason why we can hope to use an inductive argument in Section \ref{section_6}.

\begin{remark} \label{xi_delta_difference}
For $\delta = [\delta_0, \ldots, \delta_p]$ and $\Xi_{s_\alpha}(\delta) = [\delta_0',\ldots,\delta_p']$ it is obvious by definition that
$$ \delta_i' = \left\{\begin{array}{ll}
s_{\alpha_i} \delta_i & \text{if } i={\boldsymbol{\mathfrak k}_\alpha^\delta} \text{ and } {\boldsymbol{\mathfrak k}_\alpha^\delta} < p\\
s_{\alpha_i} \delta_i & \text{if $i<{\boldsymbol{\mathfrak k}_\alpha^\delta}$ and there exists $t \leq {\boldsymbol{\mathfrak k}_\alpha^\delta}$ such that $[i,t]$ is an $\alpha$-stable section} \\
s_{\alpha_i} \delta_i & \text{if $i<{\boldsymbol{\mathfrak k}_\alpha^\delta}$ and there exists $t \leq {\boldsymbol{\mathfrak k}_\alpha^\delta}$ such that $[t,i]$ is an $\alpha$-stable section} \\
\delta_i & \text{otherwise}
\end{array} \right. .$$
\end{remark}

%%%%%%%%%%%%%%%%%%%%%%%%%%%%%%%%%%%%%%%%%%%%%%%%%%%%%%%%%%%%%%%%%%%%%%%%%%%%%%%%%%%%%%%%%%%%%%%%%%%%%%%%%%%%%%%%%%%%%%%%%%%%%%%%%%%%%%%%
%%%%%%%%%%%%%%%%%%%%%%%%%%%%%%%%%%%%%%%%%%%%%%%%%%%%%%%%%%%%%%%%%%%%%%%%%%%%%%%%%%%%%%%%%%%%%%%%%%%%%%%%%%%%%%%%%%%%%%%%%%%%%%%%%%%%%%%%
%%%%%%%%%%%%%%%%%%%%%%%%%%%%%%%%%%%%%%%%%%%%%%%%%%%%%%%%%%%%%%%%%%%%%%%%%%%%%%%%%%%%%%%%%%%%%%%%%%%%%%%%%%%%%%%%%%%%%%%%%%%%%%%%%%%%%%%%

\section{Retractions and Results}\label{section_6}
We introduced the notion of flipping to obtain a link between the positively folded LS-galleries and the retractions at infinity for all Weyl group elements. In this section we prove the main theorem about the retractions in the affine building when applied to a dense subset in a given cell $C(\delta)$, for a given combinatorial gallery $\delta \in \Gamma^+_{LS}(\gamma_\lambda)$. This relates the retractions with the combinatorial galleries $\Xi_w(\delta)$, defined in the last section, and will give a proof for the fact that these galleries can be used to define the MV-polytope, as well as the MV-cycle via the GGMS-strata.

\subsection*{Coordinate changes}

For $g \in C(\delta)$, let $g=g(a)=[g_0(\underline{a_\beta}),g_1(a_1)\ldots,g_p(a_p)]$ with 
$$g_0(\underline{a_\beta}) = \prod_{\beta < 0, \delta_0^{-1}(\beta)<0} x_\beta(a_\beta) \cdot \delta_0$$
and 
$$g_j(a_j) = \left\{ \begin{array}{ll} \delta_j & \text{if } j \not\in J_{-\infty}(\delta) \\ x_{-\alpha_{i_j}}(a_j), a_j \neq 0& \text{if } j \in J_{-\infty}^-(\delta) \\ x_{\alpha_{i_j}}(a_j)s_{i_j} & \text{if } j \in  J_{-\infty}^+(\delta) \end{array} \right. $$
Hence our gallery has coordinates $a_j$ for $1 \leq j \leq p$ as above and $a_\beta$ for negative roots $\beta$. In the latter case the coordinates for those roots $\beta < 0$ such that $\delta_0^{-1}(\beta)>0$ are just zero.

The first thing that needs to be done is to rewrite the gallery in the coordinates of $\ku_{\Xi_{s_\alpha}(\delta)}^{s_\alpha}$. This will not work for an arbitrary gallery $g$, we will need to impose some assumptions on the coordinates for this to be possible.

To rewrite the coordinates, we recall that by Remark \ref{xi_delta_difference} the galleries $\delta$ and $\Xi_{s_\alpha}(\delta)$ only differ at certain indices. Hence we have to proceed in two steps:
\begin{enumerate}
\item First we have to eliminate the folding at $\boldsymbol{\mathfrak t}_\alpha^\delta$ if this is less than $p$.
\item Second for every stable section occurring before the minimum we have to change the folding at the first index to a crossing and the crossing at the last index to a folding.
\end{enumerate}

The following proposition will be needed multiple times to obtain contradictions in the succeeding proofs.

\begin{proposition} \label{gallery_origin}
Let $\delta \in \Gamma(\gamma_\lambda)$, $1 \leq j \leq p$, and $\Delta_j' \subset H_{\beta,n}$ for some $\beta \in \Phi^+$ and some $n \in \IZ$. Then it holds
$$ \delta_0 \dots \delta_{j-1} (-\delta_j \alpha_{i_j}) = \left\{ \begin{array}{cc} -\beta+n\underline{\delta} & \text{ if } j \in J_{-\infty}(\delta) \\
\beta-n\underline{\delta} & \text{ if } j \not\in J_{-\infty}(\delta) \end{array} \right. $$
\end{proposition}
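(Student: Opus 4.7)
The plan is to identify both sides of the claimed equality as the unique affine root that vanishes on $\Delta_j'$ and is negative on $\Delta_j = \Gamma_j$. Throughout I use the convention that the affine root $\alpha - n\underline{\delta}$ corresponds to the affine linear functional $x \mapsto \langle\alpha,x\rangle - n$, so that the hyperplane $H_{\beta,n}$ with $\beta \in \Phi^+$ carries exactly the two affine roots $\pm(\beta - n\underline{\delta})$, the first being positive on $\{\langle\beta,\cdot\rangle > n\}$ and the second on the complementary open half-space.

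First I would unwind the left-hand side geometrically. Let $F \subset \Delta_f$ be the face of type $s_{i_j}$; by construction the affine simple root $\alpha_{i_j}$ vanishes on $F$ and is positive on the interior of $\Delta_f$ (uniformly in $i_j \in I$ and $i_j = 0$ once $\alpha_0$ is identified with the pair $(-\theta,-1)$). Because $\Gamma_j = \delta_0 \cdots \delta_j \Delta_f$ and $\Delta_j' = \delta_0 \cdots \delta_{j-1} F$, the affine root $\delta_0 \cdots \delta_{j-1}\alpha_{i_j}$ vanishes on $\Delta_j'$ and is positive on $\Gamma_{j-1}$. The two possibilities $\delta_j = \mathrm{id}$ (folding, with $\Gamma_j = \Gamma_{j-1}$) and $\delta_j = s_{i_j}$ (crossing, with $\Gamma_j$ the reflection of $\Gamma_{j-1}$ across $\Delta_j'$) are then unified by the single statement that $\delta_0 \cdots \delta_j \alpha_{i_j}$ is the affine root positive on $\Gamma_j$ and vanishing on $\Delta_j'$. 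Consequently
$$\delta_0 \cdots \delta_{j-1}(-\delta_j \alpha_{i_j}) \;=\; -\,\delta_0 \cdots \delta_j\,\alpha_{i_j}$$
is precisely the affine root vanishing on $\Delta_j'$ and negative on $\Delta_j$.

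Next I would decode the right-hand side via the definition of $J_{-\infty}(\delta)$. Because $\mathfrak{C}_{-f}$ is anti-dominant, every translate $\tau\mathfrak{C}_{-f}$ with $\tau \in W^{\mathfrak{a}}$ still extends indefinitely in the direction where $\langle\beta,\cdot\rangle \to -\infty$, so no translate of $\mathfrak{C}_{-f}$ can be placed inside $\{\langle\beta,\cdot\rangle > n\}$, while sufficiently negative translates sit entirely inside $\{\langle\beta,\cdot\rangle < n\}$. It follows that $H_{\beta,n}$ separates $\Delta_j$ from $\mathfrak{C}_{-f}$, i.e.\ $j \in J_{-\infty}(\delta)$, if and only if $\Delta_j \subset \{\langle\beta,\cdot\rangle \geq n\}$. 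Reading off the affine root negative on $\Delta_j$ in each case then gives $-\beta + n\underline{\delta}$ precisely when $j \in J_{-\infty}(\delta)$ and $\beta - n\underline{\delta}$ otherwise; combining this with the description of the left-hand side from the previous paragraph yields the proposition.

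The main delicate point I anticipate is the uniform bookkeeping in the affine root system when $i_j = 0$, since $\alpha_0$ corresponds to the pair $(-\theta,-1)$ and therefore behaves differently from a classical simple root under the translation part of $\delta_0 \cdots \delta_{j-1}$. Writing $\delta_0 \cdots \delta_j = (\tau_\mu, u) \in W^{\mathfrak{a}}$ and using the formula $(\tau_\mu,u)(\alpha,m) = (u\alpha, m + \langle\alpha,\mu\rangle)$, one needs to verify that the resulting pair coincides with $\pm(\beta,n)$, with the sign dictated by the side of $H_{\beta,n}$ on which $\Gamma_j$ lies. Once this straightforward but notation-heavy verification is carried out, the proposition reduces to the two geometric observations above.
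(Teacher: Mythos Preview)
Your argument is correct. Both you and the paper identify $\delta_0\cdots\delta_{j-1}(-\delta_j\alpha_{i_j})$ as the affine root vanishing on $\Delta_j'$ and negative on $\Delta_j$, and then read off the sign from the load-bearing condition; in that sense the approaches coincide. The difference is one of presentation: the paper carries out the identification by writing $\delta_0\cdots\delta_j=s^\mu w$ with $\mu\in\IZ\Phi^+$, $w\in W$, and explicitly conjugating the one-parameter subgroup $x_{\delta_j\alpha_{i_j}}(a)$ through $s^\mu w$, splitting into the cases $\alpha_{i_j}\neq\alpha_0$ and $\alpha_{i_j}=\alpha_0$ (exactly the bookkeeping you flag at the end). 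Your version packages this as the single geometric statement that $\delta_0\cdots\delta_j\,\alpha_{i_j}$ is automatically the affine root positive on $\Gamma_j$, which is cleaner and avoids the case split at the level of the proof of the proposition itself. The paper's explicit conjugation formula, on the other hand, is precisely the form in which the result is used in the subsequent coordinate-change arguments (Propositions~\ref{general_2}--\ref{general_2_alt}), so its extra computational content is not wasted.
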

\begin{proof}
We only discuss the load bearing case, the other one is the same with opposite signs. We use the notations from the end of section \ref{section_2}. Let $H'$ be the wall, that contains the face of $\Delta_f$ of type $t_j'$. Then $H = \delta_0 \dots \delta_j H'$ contains $\Delta_j'$ by definition of the type. We can decompose the element $\delta_0 \dots \delta_j$ into $s^\mu w$ with $\mu \in \IZ\Phi^+$ and $w \in W$. As we are assuming that $j$ is load-bearing it holds that $\Delta_j=\delta_0 \dots \delta_j \Delta_f \subset H^+$, the closed positive half-space corresponding to $H$. This implies that $w \Delta_f \subset (t^{-\mu}H)^+$. We will now distinguish two cases.
\begin{enumerate}
\item \textbf{ Assume $\alpha_{i_j} \neq \alpha_0$:} Here $\Delta_j'$ and $H$ contain the unique vertex of $\Delta_j$ of type $S$. This implies that $t^{-\mu}H$ contains $0$ as $w0=0$. Thus $s^{-\mu}H$ is a hyperplane through the origin and hence $\Delta_f \subset (s^{-\mu}H)^+$. Since $\Delta_f$ and $w\Delta_f$ are included in $(s^{-\mu}H)^+$, we know $\beta:=w\alpha_{i_j} \in \Phi^+$ and we set $n = \left\langle \beta, \mu \right\rangle$, hence $H=H_{\beta,n}$. We conclude
\begin{eqnarray*}
(\delta_0 \dots \delta_{j-1})x_{\delta_j \alpha_{i_j}}(a)(\delta_0 \dots \delta_{j-1})^{-1} &=& s^\mu x_{w \alpha_{i_j}}(\pm a) s^{-\mu} \\
&=& x_\beta(\pm a s^{\left\langle \beta, \mu \right\rangle}) = x_\beta(\pm a s^n).
\end{eqnarray*}
For a positive folding at $j$, we obtain $\delta_0 \dots \delta_{j-1} (-\alpha_{i_j})= -\beta + n\underline{\delta}$, since $\delta_j \alpha_{i_j} = \alpha_{i_j}$. While for the case of a positive crossing we have $\delta_0 \dots \delta_{j-1} \alpha_{i_j}= -\beta + n\underline{\delta}$.

\item \textbf{ Assume $\alpha_{i_j} = \alpha_0$:} Now $H'=H_{\theta,1}$ and thus $t^{-\mu}H = wH' = H_{w\theta,1}$ and $w\Delta_f \subset (H_{w\theta,1})^+$. Thus we conclude that $w\theta \in \Phi^-$ and set $\beta = w(-\theta)$ and $n = \left\langle \beta, \mu \right\rangle -1$, hence $H=H_{-\beta,-n}=H_{\beta,n}$. Again we make the calculation
\begin{eqnarray*}
(\delta_0 \dots \delta_{j-1})x_{\delta_j \alpha_{i_j}}(a)(\delta_0 \dots \delta_{j-1})^{-1} &=& s^\mu x_{w \alpha_{i_j}}(\pm a t^{-1}) s^{-\mu}\\
&=& x_\beta(\pm a s^{\left\langle \beta, \mu \right\rangle -1}) = x_\beta(\pm a s^n).
\end{eqnarray*}
Hence we again obtain for a positive folding $\delta_0 \dots \delta_{j-1} (-\alpha_{i_j})= -\beta + n\underline{\delta}$ and for a positive crossing $\delta_0 \dots \delta_{j-1} \alpha_{i_j}= -\beta + n\underline{\delta}$.
\end{enumerate}
\end{proof}

We need to fix some additional notations to simplify the coming proofs.

\begin{notation} \label{Index_def}
\begin{enumerate}
\item We fix a simple root $\alpha$, an LS-gallery $\delta=[\delta_0,\delta_1,\ldots,\delta_p]$, and $g \in C(\delta)$ with the coordinates written as above. 
\item For an $\alpha$-stable section $[a,b]$ of $\delta$, stable at $n$, an index $i \in [a,b-1]$ is called \textit{critical} (or more precise $\alpha$-critical), if $\Delta_i' \subset H_{\alpha,n}$.
\item Let the intervals $[u_2,v_2], \ldots, [u_r,v_r]$ denote the $\alpha$-stable sections of $\delta$ such that $v_2 < {\boldsymbol{\mathfrak j}_\alpha^\delta}$ and $u_i > v_{i+1}$, hence the $\alpha$-stable sections that occur before ${\boldsymbol{\mathfrak j}_\alpha^\delta}$ in reverse order. For $[u_i,v_i]$ we denote by $C([u_i,v_i])$ the set of critical indices of this section.
\item We define $u_1 = \boldsymbol{\mathfrak t}_\alpha^\delta$ and $v_1 = p$ and denote by $C([u_1,v_1])$ those indices $u_1 \leq i \leq p$, such that $\Delta_i' \subset H_{\alpha,\boldsymbol{\mathfrak m}_\alpha^\delta}$.
\item We put $I^c = \bigcap_{1 \leq i \leq r} C([u_i,v_i])$ and for $1 \leq j \leq p$ we define $I^c_{<j} = \{i \in I^c \mid i < j \}$.
\end{enumerate}
\end{notation}

\begin{remark}
By definition $u_i \in C([u_i,v_i])$, for $1 \leq i \leq r$.
\end{remark}

We now have to deal with the different positions where we have to change the coordinates of our gallery. As we do not want to change the different positions in an arbitrary order, hence we fix a sequence of galleries $\kappa^i$, $1 \leq i \leq r$, lying "between" $\delta$ and $\Xi_{s_\alpha}(\delta)$. The galleries $\kappa^i=[\kappa^i_0,\kappa^i_1,\ldots,\kappa^i_p]$ are defined as follows:
$$ \kappa^0 = \delta,$$
$$ \kappa^1_j = \left\{ \begin{array}{cl} s_{\alpha_{i_j}}\kappa^{0}_j & j = u_1 \text{ and } u_1 < p\\ 
\delta_j & \text{otherwise} \end{array} \right., \text{ and}$$

$$ \kappa^i_j = \left\{ \begin{array}{cl} s_{\alpha_{i_j}}\kappa^{i-1}_j & j = u_i \text{ or } j = v_i \\ 
\kappa^{i-1}_j & \text{otherwise} \end{array} \right. \text{ for } 2 \leq i \leq r.$$

\begin{remark}
By construction $\kappa^r = \Xi_{s_\alpha}(\delta)$. One should also note that $\kappa^1$ and $\delta$ coincide if $\boldsymbol{\mathfrak t}_\alpha^\delta$.
\end{remark}

The following technical lemmas are key for the proofs of the succeeding propositions.

\begin{lemma} \label{new_terms_1}
Let $\delta \in \Gamma_{LS}^+(\gamma_\lambda)$, $\alpha$ simple root, and $0 < k < p$ such that
\begin{itemize}
\item $\delta$ has a positive folding at $k$,
\item $\Delta_k' \subset H_{\alpha,n}$ for some $n$, and
\item if for $k < j \leq p$ there exists $t$ with $\Delta_j' \subset H_{\alpha,t}$, then $t > n$ holds.
\end{itemize}
If there exist indices $k < j_1 < j_2 < \ldots j_l < s \leq p$, such that $\delta$ has a positive crossing or folding at each $j_1, \ldots, j_l$, and positive integers $p, q_1, \ldots, q_l$ such that
$$ \alpha_{i_s} = p \delta_{s-1} \cdots \delta_{k+1} \alpha_{i_k} + \sum_{h = 1}^l q_h \delta_{s-1} \cdots \delta_{j_h} \beta_{j_h}, $$
for $\beta_{j_h} = - \delta_{j_h} \alpha_{i_{j_h}}$. Then $\delta$ has a positive crossing at the position $s$.
\end{lemma}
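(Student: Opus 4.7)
The plan is to apply the Weyl-group element $\delta_0 \cdots \delta_{s-1}$ to both sides of the hypothesized identity, rewrite each resulting term via Proposition \ref{gallery_origin}, and argue by contradiction to rule out every possibility other than a positive crossing at $s$.

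First I would exploit the fact that each $\delta_j$ for $j\geq 1$ is an involution to collapse the telescoping products: $\delta_0 \cdots \delta_{s-1}\cdot \delta_{s-1}\cdots\delta_{k+1}=\delta_0\cdots\delta_k$ and similarly $\delta_0\cdots\delta_{s-1}\cdot\delta_{s-1}\cdots\delta_{j_h}=\delta_0\cdots\delta_{j_h-1}$. Acting on the given identity this yields
\[
\delta_0 \cdots \delta_{s-1}(\alpha_{i_s}) \;=\; p\,\delta_0 \cdots \delta_k(\alpha_{i_k}) \;+\; \sum_{h=1}^{l} q_h\,\delta_0 \cdots \delta_{j_h-1}(\beta_{j_h}),
\]
an equality in the real affine root lattice. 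Using the positive folding at $k$ (so $\delta_k=\mathrm{id}$) together with Proposition \ref{gallery_origin}, the first summand equals $\alpha-n\underline{\delta}$; the load-bearing hypothesis at each $j_h$ identifies the corresponding summand as $-\gamma_h+n_h\underline{\delta}$, where $\Delta_{j_h}'\subset H_{\gamma_h,n_h}$ with $\gamma_h\in\Phi^+$.

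Next I would suppose for contradiction that $s$ is not a positive crossing, so $\delta$ either folds positively at $s$ or crosses at $s$ without being load-bearing. A case analysis of Proposition \ref{gallery_origin} reveals that in both of these situations the classical (non-$\underline{\delta}$) component of $\delta_0\cdots\delta_{s-1}(\alpha_{i_s})$ equals the positive root $+\gamma_s$, where $\Delta_s'\subset H_{\gamma_s,n_s}$; only in the positive-crossing case does one instead obtain $-\gamma_s$. Matching classical parts of the displayed equation then gives
\[
p\,\alpha \;=\; \gamma_s + \sum_{h=1}^{l} q_h\,\gamma_h,
\]
i.e.\ a positive integer multiple of a simple root written as a positive-coefficient combination of positive roots. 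Since $\alpha$ is simple, this forces $\gamma_s=\alpha$ and $\gamma_h=\alpha$ for every $h$, with $p=1+\sum_h q_h$.

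Finally, matching the $\underline{\delta}$-components yields $n_s = pn-\sum_h q_h n_h = n-\sum_h q_h(n_h-n)$. But the third hypothesis of the lemma, now applied to the $\alpha$-walls at $j_h$ and $s$ (all with index exceeding $k$), forces $n_h>n$ and $n_s>n$, while the displayed formula gives $n_s\leq n$, a contradiction. Hence the assumption fails and $\delta$ has a positive crossing at $s$. The step I expect to demand the most care is the sign bookkeeping in Proposition \ref{gallery_origin}, since the entire argument rests on the observation that the positive crossing is the unique case leaving the classical component of $\delta_0\cdots\delta_{s-1}(\alpha_{i_s})$ with sign $-\gamma_s$, and therefore the unique case that evades the positivity-plus-height contradiction.
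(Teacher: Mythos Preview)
Your proof is correct and follows essentially the same approach as the paper: apply $\delta_0\cdots\delta_{s-1}$ to the identity, invoke Proposition~\ref{gallery_origin} to rewrite each term as an affine root, and use the simplicity of $\alpha$ together with the height condition $n_h>n$ to derive a contradiction from the classical and $\underline{\delta}$-components. Your explicit bookkeeping of why precisely the positive-crossing case is the unique one giving classical part $-\gamma_s$ is slightly more detailed than the paper's presentation, but the argument is otherwise identical.
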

\begin{proof}
Let $\alpha_{i_s} = p \delta_{s-1} \cdots \delta_{k+1} \alpha_{i_k} + \sum_{h=1}^l q_h \delta_{s-1} \cdots \delta_{j_h} \beta_{j_h}$,
and assume that $\delta$ has either a negative crossing or a positive folding.
With this assumption, Proposition \ref{gallery_origin} and applying $\delta_0 \cdots \delta_{j-1}$ yields the equation 
$$\gamma-m'\underline{\delta} = p (\alpha-n\underline{\delta}) + \sum_{h=1}^l q_h (-\gamma_h + m_h\underline{\delta}),$$
for some positive roots $\gamma_h$ and $\gamma$ and some integers $m_h$ and $m'$.

Since $\alpha$ is simple and the roots $\gamma$ and $\gamma_h$ are all positive, this equation can only hold if $\gamma_h$ and $\gamma$ are equal to $\alpha$, hence we arrive at
$$\alpha-m'\underline{\delta} = p (\alpha-n\underline{\delta}) + \sum_{h=1}^l q_h (-\alpha + m_h\underline{\delta}).$$

Dividing into its real and imaginary part we obtain
$$1 + \sum_{h=1}^l q_h = p  \text{ and } m' + \sum_{h=1}^l q_h m_h = p n.$$
Since all occurring roots are equal to $\alpha$, we know by the third requirement that $m_h > n$ for all $h$ and $m' > m$. Thus the two equalities contradict each other. 

Hence $\delta$ has to have a positive crossing at $s$.
\end{proof}

%%%%%%%%%%%%%%%%%%%%%%%%%%%%%%%%%%%%%%%%%%%%%%%%%%%%%%%%%%%%%%%%%%%%%%%%%%%%%%%%%%%%%%%%%%%%%%%%%%%%%%%%%%%%%%%%%%%%%%%%%%%%%%%%%%%%%%%%%%%%%%%%%%%%%%
%%%%%%%%%%%%%%%%%%%%%%%%%%%%%%%%%%%%%%%%%%%%%%%%%%%%%%%%%%%%%%%%%%%%%%%%%%%%%%%%%%%%%%%%%%%%%%%%%%%%%%%%%%%%%%%%%%%%%%%%%%%%%%%%%%%%%%%%%%%%%%%%%%%%%%
%%%%%%%%%%%%%%%%%%%%%%%%%%%%%%%%%%%%%%%%%%%%%%%%%%%%%%%%%%%%%%%%%%%%%%%%%%%%%%%%%%%%%%%%%%%%%%%%%%%%%%%%%%%%%%%%%%%%%%%%%%%%%%%%%%%%%%%%%%%%%%%%%%%%%%

\begin{lemma}\label{new_terms_2}
Let $\kappa=\kappa^i$ for $i > 0$, $u = u_i$, $v = v_i$, and $t > v$. If there exist indices $u < j_1' < j_2' < \ldots j_{l'}' < v j_1 < j_2 < \ldots < j_l < t \leq p$, and positive integers $p$, $q_1, \ldots, q_l$, and $q_1', \ldots, q_{l'}'$ such that $\delta$ has 
\begin{itemize}
\item a positive crossing or folding at each $j_s'$, $1 \leq s \leq l'$,
\item a positive crossing or folding at each $j_s$ if $j_s \notin \IW_\alpha$,
\item a negative crossing or folding at each $j_s$ if $j_s \in \IW_\alpha$,
\end{itemize}
and at least one of the following two equalities holds
$$ \alpha_{i_t} = p \kappa_{t-1} \cdots \kappa_{v+1} \alpha_{i_v} + \sum_{h = 1}^l q_h \kappa_{t-1} \cdots \kappa_{j_h} \beta_{j_h} \text{ or }$$
$$ \alpha_{i_t} = p \kappa_{t-1} \cdots \kappa_{u+1} \alpha_{i_u} + \sum_{h = 1}^l q_h \kappa_{t-1} \cdots \kappa_{j_h} \beta_{j_h} + \sum_{h = 1}^{l'} q_h' \kappa_{t-1} \cdots \kappa_{j_h'} \beta_{j_h'}.$$
Then $\delta$ has a positive crossing at the position $t$ if $t \notin \IW_\alpha$ or a negative crossing if $t \in \IW_\alpha$.
\end{lemma}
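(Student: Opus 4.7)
My plan is to mirror the strategy of Lemma \ref{new_terms_1}: apply a telescoping multiplication to the given identity, use Proposition \ref{gallery_origin} to translate everything into affine roots, and then derive a contradiction from the real/imaginary decomposition of the resulting equality. The new difficulty compared to Lemma \ref{new_terms_1} is that the products involve $\kappa$ rather than $\delta$, so I must control how the flippings defining $\kappa = \kappa^i$ out of $\delta$ affect the relevant affine roots.

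First I would multiply both sides of whichever of the two equations is assumed to hold by $\kappa_0 \cdots \kappa_{t-1}$. Using $\kappa_j^2 = \mathrm{id}$, the telescoping collapses the products so that each summand on the right becomes $\kappa_0 \cdots \kappa_{j_h} \beta_{j_h}$ (resp.\ $\kappa_0 \cdots \kappa_{j_h'} \beta_{j_h'}$), and the leading term becomes $p\,\kappa_0 \cdots \kappa_v \alpha_{i_v}$ (resp.\ $p\,\kappa_0\cdots\kappa_u \alpha_{i_u}$). Then I would apply Proposition \ref{gallery_origin}, which is valid for the combinatorial gallery $\kappa$, to evaluate each such product as $\pm \gamma + m\underline{\delta}$ with $\gamma \in \Phi^+$ and $m \in \IZ$.

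To link these $\kappa$-quantities to the hypotheses on $\delta$, I would use the identity $w\, s_{\alpha_{i_k}} = s_{\alpha,n_k}\, w$ whenever $w = \delta_0\cdots\delta_{k-1}$ and $\Delta_k' \subset H_{\alpha,n_k}$. Since every flipped position (namely $u_1$ and the pairs $u_s,v_s$ for $2 \leq s \leq i$) is critical in an $\alpha$-stable section and hence lies in an $\alpha$-wall, this means $\kappa_0\cdots\kappa_{j-1}$ differs from $\delta_0\cdots\delta_{j-1}$ only by left multiplication by a product of reflections $s_{\alpha,n_s}$ along $\alpha$-walls. Such a product sends any affine root of the form $\pm\alpha + m\underline{\delta}$ to another such root (flipping the sign of the $\alpha$-part and shifting $m$ by an integer determined by the heights $n_s$), and sends any root whose classical part is not $\pm\alpha$ to another such root. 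Combining this with Proposition \ref{gallery_origin} applied to $\delta$ at the unflipped positions lets me identify each $j_h \in \IW_\alpha$ term as $\pm\alpha + m_h \underline{\delta}$ and each $j_h \notin \IW_\alpha$ term as having a non-$\alpha$ classical part.

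Finally, splitting the equation into its classical and $\underline{\delta}$-components exactly as in Lemma \ref{new_terms_1}, the terms with classical part different from $\pm\alpha$ cancel among themselves, and the $\alpha$-components give a pair of equations of the shape
\begin{align*}
1 + \sum_h q_h = p, \qquad m + \sum_h q_h m_h = pn,
\end{align*}
where $n$ is the $\alpha$-height at $v$ (resp.\ $u$) and the heights $m_h$ strictly exceed $n$ by the minimality property in Definition \ref{stable} and the ordering $u_r < v_r < \ldots < u_2 < v_2 < \boldsymbol{\mathfrak j}_\alpha^\delta < u_1$. Assuming for contradiction that $\delta$ has a folding at $t$, or a crossing of the wrong load-bearing type (depending on whether $t \in \IW_\alpha$ or not), the LHS also contributes $\pm\alpha$ with a height $m$ subject to an inequality incompatible with the above equations, giving the contradiction. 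The main obstacle will be precisely the bookkeeping in the previous paragraph: I must check that the $\underline{\delta}$-shifts induced by the composed $\alpha$-reflections from the flippings do not spoil the strict height inequalities $m_h > n$ and $m > n$; this relies on the fact that each reflection $s_{\alpha,n_s}$ is rooted at a wall lying strictly below the relevant heights, a consequence of Definition \ref{stable} and Notation \ref{Index_def}.
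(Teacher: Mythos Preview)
Your overall strategy --- apply $\kappa_0\cdots\kappa_{t-1}$, invoke Proposition \ref{gallery_origin}, and split into classical and $\underline{\delta}$-parts --- matches the paper's. But several concrete steps in your plan are wrong or missing, and would not yield a proof as written.

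\medskip
\textbf{1. The non-$\alpha$ terms do not ``cancel among themselves.''} This is the most serious gap. In the paper's argument, after applying $\kappa_0\cdots\kappa_{t-1}$ one obtains terms $(-\gamma_h + n_h\underline{\delta})$ with $\gamma_h\in\Phi^+\setminus\{\alpha\}$ for $h\in\overline{J_\alpha}$. These are never cancelled. In the case $t\in\IW_\alpha$ one deduces $\overline{J_\alpha}=\emptyset$ because $\alpha$ is simple and the classical part of the equation then reads $-\alpha = -p\alpha + (\sum_{J_\alpha}q_h)\alpha - \sum_{\overline{J_\alpha}}q_h\gamma_h$, which forces the last sum to vanish. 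In the case $t\notin\IW_\alpha$ the non-$\alpha$ terms are precisely the source of the contradiction: the classical part becomes $\beta + p\alpha + \sum_{\overline{J_\alpha}}q_h\gamma_h = (\sum_{J_\alpha}q_h)\alpha$ with $\beta,\gamma_h$ positive and $\neq\alpha$, which is impossible since $\alpha$ is simple. Your sketch omits this case analysis entirely and replaces it with a false cancellation claim.

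\medskip
\textbf{2. Your height inequalities point the wrong way.} For the first equality, the indices $j_h$ lie between $v$ and $t$, and for $h\in J_\alpha$ the corresponding $\alpha$-wall heights satisfy $n_h<n$ (since after $v=v_i$ the gallery $\kappa$ drops below the stable level $n$; cf.\ Notation \ref{Index_def} and the ordering of the sections). The contradiction in the paper is $pn = n_t + \sum_{J_\alpha}q_hn_h < n(1+\sum_{J_\alpha}q_h)=pn$. You claim instead that the $m_h$ \emph{strictly exceed} $n$, citing Definition \ref{stable}; that is true only for the primed indices $j_h'\in(u,v)$ appearing in the \emph{second} equality, where indeed $n_h'\geq n$. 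The two equalities require separate (and in the second case, mixed-sign) inequality arguments, which your sketch collapses into one.

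\medskip
\textbf{3. The detour through $\delta$ is unnecessary.} Proposition \ref{gallery_origin} is stated for an arbitrary $\delta\in\Gamma(\gamma_\lambda)$, so it applies directly to $\kappa$. The paper simply evaluates each $\kappa_0\cdots\kappa_{j_h-1}\beta_{j_h}$ via Proposition \ref{gallery_origin} for the gallery $\kappa$, using that $\kappa$ is negatively folded with respect to $\alpha$ after $v$ and positively folded for all other positive roots. Your plan to pull the computation back to $\delta$ by composing $s_{\alpha,n_s}$-reflections is not wrong in principle, but it introduces exactly the ``bookkeeping obstacle'' you flag, and you do not resolve it; in particular you would have to check that the induced $\underline{\delta}$-shifts preserve the \emph{strict} inequality $n_h<n$ for indices after $v$, which is not automatic from your sketch.
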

\begin{proof}
Since the gallery $\kappa$ is negatively folded with respect to $\alpha$ after $v$ and positively folded for all other positive roots, we divide the set $\{1, \ldots, l\} = J_\alpha \cup \overline{J_\alpha}$, where $J_\alpha = \{ s \mid j_s \in \IW_\alpha \}$. Although the gallery is positively folded with respect to all roots before $v$, we do the same with $\{1, \ldots, l'\} = J_\alpha' \cup \overline{J_\alpha'}$ with $J_\alpha' = \{ s \mid j_s' \in \IW_\alpha \}$.

Let us start by assuming the first equality
$$ \alpha_{i_t} = p \kappa_{t-1} \cdots \kappa_{v+1} \alpha_{i_v} + \sum_{h \in J_\alpha} q_h \kappa_{t-1} \cdots \kappa_{j_h} \beta_{j_h} + \sum_{h \in \overline{J_\alpha}} q_h \kappa_{t-1} \cdots \kappa_{j_h} \beta_{h_j}, $$
with $\beta_{j_h}=-\kappa_{j_h}(\alpha_{i_{j_h}})$.  As in Lemma \ref{new_terms_1} we apply $\kappa_0 \cdots \kappa_{t-1}$, which yields
$$ \kappa_0 \cdots \kappa_{t-1} \alpha_{i_t} = p (-\alpha + n\underline{\delta}) + \sum_{h \in J_\alpha} q_h \left( \alpha - n_h\underline{\delta} \right) + \sum_{h \in \overline{J_\alpha}} q_h \left( -\gamma_h + n_h\underline{\delta} \right), $$
for some positive roots $\gamma_h \neq \alpha$ and integers $n_h$, with $n_h < n$ if $h \in J_\alpha$.

We need to dicsuss two possible cases:
\begin{enumerate}
\item \textbf{$t \in \IW_\alpha$:} Assume that $\kappa$ has either a negative folding or a positive crossing at $t$. In this case $\kappa_0 \cdots \kappa_{t-1} \alpha_{i_t} = -\alpha + n_t \underline{\delta}$, for some $n_t < n$. This implies $\overline{J_\alpha} = \emptyset$. Dividing our equation into its classical and imaginary part we obtain:
$$ p = 1 + \sum_{h \in J_\alpha} q_h \ \text{ and } \ pn = n_t + \sum_{h \in J_\alpha}q_hn_h.$$
Since $n_t < n$ and $n_h < n$ for all $h \in J_\alpha$, this is a contradiction.

\item \textbf{$t \notin \IW_\alpha$:} Assume that $\kappa$ has either a positive folding or a negative crossing at $t$. In this case $\kappa_0 \cdots \kappa_{t-1} \alpha_{i_t} = \beta - n_t \underline{\delta}$, for some $\beta \neq \alpha$ and $n_t \in \IZ$. We only look at the classical part of our equation and obtain:
$$ \beta + p\alpha + \sum_{h \in \overline{J_\alpha}} q_h \gamma_h = \sum_{h \in J_\alpha} q_h \alpha.$$
Since the root $\beta$ and all the $\gamma_h$ are positive roots different from $\alpha$ and $\alpha$ is simple, this equation cannot hold.
\end{enumerate}

\noindent We now proceed with the second equality
$$ \alpha_{i_t} = p \kappa_{t-1} \cdots \kappa_{u+1} \alpha_{i_u} + \sum_{h \in J_\alpha \cup \overline{J_\alpha}} q_h \kappa_{t-1} \cdots \kappa_{j_h} \beta_{j_h} + \sum_{h \in J_\alpha' \cup \overline{J_\alpha'}} q_h' \kappa_{t-1} \cdots \kappa_{j_h'} \beta_{j_h'}, $$
with $\beta_{j_h}=-\kappa_{j_h}(\alpha_{i_{j_h}})$ and $\beta_{j_h'}=-\kappa_{j_h'}(\alpha_{i_{j_h'}})$. As above we apply $\kappa_0 \cdots \kappa_{t-1}$, which yields
\begin{eqnarray*}
\kappa_0 \cdots \kappa_{t-1} \alpha_{i_t} = p (\alpha - n\underline{\delta}) &+& \sum_{h \in J_\alpha} q_h \left( \alpha - n_h\underline{\delta} \right) + \sum_{h \in \overline{J_\alpha}} q_h \left( -\gamma_h + n_h\underline{\delta} \right) \\
&+& \sum_{h \in J_\alpha'} q_h' \left( -\alpha + n_h' \underline{\delta} \right) + \sum_{h \in \overline{J_\alpha'}} q_h' \left( -\gamma_h' + n_h' \underline{\delta} \right),
\end{eqnarray*}
for some positive roots $\gamma_h \neq \alpha$, $\gamma_h' \neq \alpha$ and integers $n_h$, with $n_h < n$ if $h \in J_\alpha$ and $n_h'$ with $n_h' \geq n$ if $h \in J_\alpha'$.

Again we need to discuss the two cases:
\begin{enumerate}
\item \textbf{$t \in \IW_\alpha$:} Assume that $\kappa$ has either a negative folding or a positive crossing at $t$.
As before $\kappa_0 \cdots \kappa_{t-1} \alpha_{i_t} = -\alpha + n_t \underline{\delta}$, for some $n_t < n$. Again this implies $\overline{J_\alpha} = \overline{J_\alpha'} = \emptyset$. As before we divide into the classical and imaginary parts and obtain
$$ \sum_{h \in J_\alpha'} q_h' = p + 1 + \sum_{h \in J_\alpha} q_h \ \text{ and } \ \sum_{h \in J_\alpha'} n_h'q_h' = np + n_t + \sum_{h \in J_\alpha} n_hq_h.$$
Since $n_h' \geq n$, $n_t<n$, and $n_h < n$, these two equalities contradict each other.
\item \textbf{$t \notin \IW_\alpha$:} Assume that $\kappa$ has either a positive folding or a negative crossing at $t$. Applying $\kappa_0 \cdots \kappa_{t-1}$ to the left side yields $\beta - n_t \underline{\delta}$ for some $\beta \neq \alpha$ and $n_t \in \IZ$. We look at the classical part of this equation and order everything in a suitable way to obtain
$$ \beta + \sum_{h \in \overline{J_\alpha}} q_h \gamma_h + \sum_{h \in \overline{J_\alpha'}} q_h' \gamma_h' = \left( p + \sum_{h \in J_\alpha} q_h - \sum_{h \in J_\alpha'} q_h' \right) \alpha.$$
Since all the $\gamma_h$, $\gamma_h'$, and $\beta$ are positive roots different from $\alpha$, this equality cannot hold.
\end{enumerate}
\end{proof}
%%%%%%%%%%%%%%%%%%%%%%%%%%%%%%%%%%%%%%%%%%%%%%%%%%%%%%%%%%%%%%%%%%%%%%%%%%%%%%%%%%%%%%%%%%%%%%%%%%%%%%%%%%%%%%%%%%%%%%%%%%%%%%%%%%%%%%%%%%%%%%%%%%%%%%
%%%%%%%%%%%%%%%%%%%%%%%%%%%%%%%%%%%%%%%%%%%%%%%%%%%%%%%%%%%%%%%%%%%%%%%%%%%%%%%%%%%%%%%%%%%%%%%%%%%%%%%%%%%%%%%%%%%%%%%%%%%%%%%%%%%%%%%%%%%%%%%%%%%%%%
%%%%%%%%%%%%%%%%%%%%%%%%%%%%%%%%%%%%%%%%%%%%%%%%%%%%%%%%%%%%%%%%%%%%%%%%%%%%%%%%%%%%%%%%%%%%%%%%%%%%%%%%%%%%%%%%%%%%%%%%%%%%%%%%%%%%%%%%%%%%%%%%%%%%%%
We commence with the changes of the gallery at a section $[u_i,v_i]$, but assume that $u_i \neq 0$. We will deal with the case $u_i = 0$ afterwards.

\begin{proposition}\label{general_2}
Fix $1 \leq i \leq r$ and assume $0 < u_i < p$. Let $g=g(a) \in \ku_{\kappa^{i-1}}$ be the coordinates of $g$ with respect to $\kappa^{i-1}$.
If for any $I \subset C([u_i,v_i])$ the inequality
$$ \sum_{n \in I} a_n \neq 0, $$
holds, then $g \in \ku_{\kappa^i}$.
\end{proposition}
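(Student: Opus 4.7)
The plan is to perform an explicit coordinate change at the positions $u_i$ (and, for $i \geq 2$, also $v_i$) by successively applying the Steinberg relation~(3) from Section~\ref{section_2}, using the ``imported'' $x_\alpha$-factors that come from the critical indices inside the stable section $[u_i,v_i]$.

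First I would examine the entries position by position. In the $\ku_{\kappa^{i-1}}$ chart, the entry $g_{u_i}$ has the standard form dictated by $\kappa^{i-1}_{u_i}$ (either $x_{\alpha_{i_{u_i}}}(a_{u_i})s_{i_{u_i}}$ or $x_{-\alpha_{i_{u_i}}}(a_{u_i})$), and the same for each critical index $n \in C([u_i,v_i])$. To match the form dictated by $\kappa^i$ at $u_i$, I would commute the $x_{\pm\alpha_{i_n}}$-factors attached to these critical indices down toward $u_i$, using relation~(1) (which rescales coordinates by powers of the uniformizer) and Chevalley's commutator formula~(4) (which produces extra factors attached to roots of the form $p\alpha_{i_n} + q\gamma$). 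After collecting, the cumulative contribution in the $\alpha_{i_{u_i}}$-direction at $u_i$ takes the shape $\sum_{n \in I} c_n\, a_n$ for a nonzero constants $c_n$ and some subset $I \subseteq C([u_i,v_i])$ determined by which critical indices actually propagate to $u_i$. The hypothesis that $\sum_{n \in I} a_n \neq 0$ for every such $I$ guarantees that the accumulated coefficient is nonzero, and hence that the Steinberg relation~(3) is applicable, converting the folding at $u_i$ into a crossing (or conversely) while producing a torus element of the form $c^{\alpha_{i_{u_i}}^\vee}$ that must still be pushed past the rest of the gallery.

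Next I would transport this torus element through the remaining positions using relation~(1) alone, which only rescales the coordinate $a_j$ by $c^{\langle \pm\alpha_{i_j},\alpha_{i_{u_i}}^\vee\rangle}$ without altering the standard form of $g_j$, so the gallery remains in the expected shape for $\ku_{\kappa^i}$. For $i \geq 2$, an entirely parallel manipulation is needed at $v_i$: the surviving $x$-factor accumulated while crossing the stable section arrives there with a nonzero coefficient (again by the hypothesis, applied now to the relevant subset~$I$ containing $v_i$'s neighbourhood of critical indices), and a second Steinberg move flips the crossing/folding at $v_i$, which is precisely the remaining change between $\kappa^{i-1}$ and $\kappa^i$. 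The assumption $u_i > 0$ avoids having to touch the initial factor $g_0$, and $u_i < p$ ensures $v_i$ actually lies inside the gallery.

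The principal obstacle, and the reason Lemmas~\ref{new_terms_1} and~\ref{new_terms_2} are proved beforehand, lies in controlling the parasitic terms produced by Chevalley's formula when the $x$-factors are shifted across positions. A priori, each commutation spawns a factor $x_{p\alpha + q\beta}(\cdot)$ at some intermediate position $t$, and one must show either that $p\alpha + q\beta$ coincides with the root governing the gallery entry at $t$ (so the new factor merely rescales the existing coordinate $a_t$ and is harmlessly absorbed into the $\ku_{\kappa^i}$-chart description) or that such a coincidence would contradict the positive-folding, respectively mixed positive/negative, pattern of $\kappa^{i-1}$. The latter incompatibility is precisely the content of Lemmas~\ref{new_terms_1} and~\ref{new_terms_2}: the first handles the situation before $u_i = \boldsymbol{\mathfrak t}_\alpha^\delta$ in the $i=1$ step, the second handles the mixed situation after the already-flipped section $[u_i,v_i]$ for $i \geq 2$. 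Together they guarantee that no uncontrolled factor survives, so the rewritten $g$ fits into the $\ku_{\kappa^i}$-chart, finishing the argument.
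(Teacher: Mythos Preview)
Your overall architecture is right --- flip at $u_i$, propagate, flip at $v_i$, and invoke Lemmas~\ref{new_terms_1} and~\ref{new_terms_2} to absorb Chevalley parasites --- but the mechanism you describe at the critical indices is backward and misses the key relation. You propose to commute the factors at the critical indices $k_2,\ldots,k_s$ \emph{toward} $u_i$ and accumulate a sum there before applying relation~(iii). The paper does the opposite: it applies relation~(iii) at $u_i$ immediately (this needs only $a_{u_i}\neq 0$, i.e.\ the hypothesis for the singleton $I=\{u_i\}$), producing $x_{\alpha_{i_u}}(a_u^{-1})\,s_{i_u}\cdot x_{\alpha_{i_u}}(a_u)\,a_u^{\alpha_{i_u}^\vee}$, and then pushes the trailing $x_{\alpha_{i_u}}(a_u)\,a_u^{\alpha_{i_u}^\vee}$ \emph{rightward} through the section. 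The partial sums $\sum_{l\leq j} a_{k_l}$ do not arise from collecting at $u_i$; they arise at each subsequent critical index $k_j$ from relation~(ii), where one must commute $x_{\alpha}(A)$ past $x_{-\alpha}(a_{k_j})$ and the nonvanishing of $1+A\cdot(\text{something})$, equivalently of $A+a_{k_j}=\sum_{l\leq j}a_{k_l}$, is exactly what the hypothesis supplies for $I=\{k_1,\ldots,k_j\}$ (this is made explicit in Lemma~\ref{critical_calc}). Your account never invokes relation~(ii), and your claim that only a torus element needs to be transported (``using relation~(i) alone'') is incompatible with your own later reference to a ``surviving $x$-factor'' arriving at $v_i$.

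Concretely: relation~(iii) is used once at $u_i$ and once (inversely) at $v_i$; relation~(ii) is used at every intermediate critical index and is where the inequality hypothesis actually bites; Lemma~\ref{new_terms_1} controls the Chevalley terms produced while traversing $[u_i,v_i]$ (applied to suitable truncations so the third bullet holds), and Lemma~\ref{new_terms_2} controls those produced after $v_i$. Reordering your argument along these lines would make it correct and essentially identical to the paper's proof.
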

\begin{proof}
For simplicity we will write $[u,v]$ instead of $[u_i,v_i]$. Let $n \in \IZ$ such that either $[u,v]$ is $\alpha$-stable at $n$ or $n = \boldsymbol{\mathfrak m}_\alpha^\delta$ if $u_i = \boldsymbol{\mathfrak t}_\alpha^\delta$. 
By the assumption $u < p$ we have to change $\kappa^i_u$ to ${\rm id}_W$.

Hence we look at the part
$$[x_{-\alpha_{i_u}}(a_u),g_{u+1}(a_{u+1}),\ldots,g_p(a_p)].$$
We use the Chevalley relations to change the gallery at the position $u$, to obtain
$$[x_{\alpha_{i_u}}(a_u^{-1})s_{\alpha_{i_u}}x_{\alpha_{i_u}}(a_u) a_u^{\alpha_{i_u}^\vee},g_{u+1}(a_{u+1}),\ldots,g_p(a_p)].$$
\noindent
We move the two terms $x_{\alpha_{i_u}}(a_u) a_u^{\alpha_{i_u}^\vee}$ to the right using the commutator formula. The term $a_u^{\alpha_{i_u}^\vee}$ poses no problem in this regard. The term $x_{\alpha_{i_u}}(a_u)$ produces new terms when moving to the right, which need to be controlled.
 
Let $u=k_1 < \ldots < k_s$ be such that $C([u,v])=\{k_i \mid 1 \leq i \leq s \}$. We first move $x_{\alpha_{i_u}}(a_u) a_{u}^{\alpha_{i_u}^\vee}$ to the right until it arrives in front of $g_{k_2}(a_{k_2})$. We also move all newly created terms to the right. 

If we look at the truncated gallery $[\kappa^{i-1}_0,\ldots, \kappa^{i-1}_{k_2-1}]$ and the index $k=k_1$ it satisfies the prerequisites of Lemma \ref{new_terms_1}, hence the new terms that are created in that part of the gallery will either add themselves to coefficients in front of positive crossings or move to the end and arrive in front of $g_{k_2}(a_{k_2})$.

Using the notation from Lemma \ref{new_terms_1} a new term that arrives in front of $g_{k_2}(a_{k_2})$ has a root of the form
$$p \delta_{k_2-1} \cdots \delta_{k+1} \alpha_{i_k} + \sum_{h = 1}^l q_h \delta_{k_2-1} \cdots \delta_{j_h} \beta_{j_h}.$$
Assuming that this is equal to $\alpha_{i_{k_2}}$, we apply $\delta_0 \cdots \delta_{k_2-1}$ to this equality and divide into the real and imaginary parts,
$$ (p-1)\alpha = \sum_{h=1}^l q_h \gamma_h \ \text{ and } \ (p-1)n = \sum_{h=1}^l q_h m_h. $$
Again it easily follows that all occurring $\gamma_h$ must be equal to $\alpha$. If we assume a single $m_h$ to be strictly greater than $n$ we have a contradiction. Thus the new terms can move past $g_{k_2}(a_{k_2})$ without changing the coefficient at that position.

The same argument also shows that new terms will move past all other critical indices. For this we only need to assume that at least one of the $m_h$ above is strictly greater than $n$. But this is no restriction, since if we assume that all $m_h$ are equal to $n$ then all the contributing indices $j_h$ correspond are critical indices and the commutator formula does not apply at these positions as the term $x_{\alpha_{i_u}}(a_u)$ arrives at such a position as $x_{\alpha_{i_{k_h}}}(a_u)$.

\textbf{\textit{Summary:}} Hence new terms either add themselves to coefficients at positive crossings or move past $k_s$.

The term $x_{\alpha_{i_u}}(a_u)$ can be moved to the right, since by Proposition \ref{gallery_origin} the only time where its root could be changed to a negative root are those in $C([u,v])$ and those changes are explicitely calculated in Lemma \ref{critical_calc}. Thus we can move it up to the end.

If we are in the case that $v = p$, we move $x_{\alpha_{i_u}}(a_u)$ to the end and it vanishes and the arguments above show that we can move all the new terms to the end of the gallery as well and they either vanish at the end or add themselves to terms at positive crossings.

Hence we assume that $v \neq p$, in this case we have a negative crossing at the position $v$. Hence by Lemma \ref{critical_calc} below, the terms $x_{\alpha_{i_u}}(a_u) a_{u}^{\alpha_{i_u}^\vee}$ will move to the right and arrive at the position $v$ as $x_{\alpha_{i_v}}(A) (-A)^{\alpha_{i_v}^\vee}$.

We do an inverse transformation to the one at position $u$
$$ x_{\alpha_{i_v}}( A ) (-A)^{\alpha_{i_v}^\vee} s_{i_v} = x_{-\alpha_{i_v}}( A^{-1} ) x_{\alpha_{i_v}}( -A ). $$

Thus like in the previous situation we have a term of the form $x_{\alpha_{i_v}}( -A )$ that needs to move through the rest of the gallery. Since, in this case, all hyperplanes corresponding to $\alpha$, occurring after the position $v$ are of a height strictly smaller than $n$ the term itself will move through the whole gallery and we only need to take care of the terms created via the Chevalley formula. The behaviour of the new terms is controlled by applying Lemma \ref{new_terms_2}.
\end{proof}

%%%%%%%%%%%%%%%%%%%%%%%%%%%%%%%%%%%%%%%%%%%%%%%%%%%%%%%%%%%%%%%%%%%%%%%%%%%%%%%%%%%%%%%%%%%%%%%%%%%%%%%%%%%%%%%%%%%%%%%%%%%%%%%%%%%%%%%%
%%%%%%%%%%%%%%%%%%%%%%%%%%%%%%%%%%%%%%%%%%%%%%%%%%%%%%%%%%%%%%%%%%%%%%%%%%%%%%%%%%%%%%%%%%%%%%%%%%%%%%%%%%%%%%%%%%%%%%%%%%%%%%%%%%%%%%%%
%%%%%%%%%%%%%%%%%%%%%%%%%%%%%%%%%%%%%%%%%%%%%%%%%%%%%%%%%%%%%%%%%%%%%%%%%%%%%%%%%%%%%%%%%%%%%%%%%%%%%%%%%%%%%%%%%%%%%%%%%%%%%%%%%%%%%%%%

\begin{lemma} \label{critical_calc}
We use the same notations as in the proof of Proposition \ref{general_2}, with $g=g(b)$ being the coordinates with respect to $\kappa^i$, then the following holds
$$ b_u = a_u^{-1} \ \text{ and } \ b_{k_j} = \frac{a_{k_j}}{\left( \sum_{l=1}^{j-1} a_{k_l} \right) \left( \sum_{l=1}^j a_{k_l} \right)}.$$
\end{lemma}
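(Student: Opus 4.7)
The formula falls out of a direct computation using the commutation relations (i)--(iii) of Section \ref{section_2} together with the Iwahori equivalence of the Bott--Samelson variety, which lets elements of $\ki$ be absorbed rightward across the gallery. The value $b_u=a_u^{-1}$ is already extracted in the proof of Proposition \ref{general_2}: rearranging (iii) as
$$x_{-\alpha}(a)\;=\;x_{\alpha}(a^{-1})\,\overline{s}_\alpha\,x_{\alpha}(a)\,a^{\alpha^\vee},$$
applied to $g_u(a_u)=x_{-\alpha_{i_u}}(a_u)$, produces at position $u$ the factor $x_{\alpha_{i_u}}(a_u^{-1})\,s_{\alpha_{i_u}}$ in $\kappa^i$-chart form, giving $b_u=a_u^{-1}$ immediately, and leaves the residual $E_1:=x_{\alpha_{i_u}}(a_u)\,a_u^{\alpha_{i_u}^\vee}\in\ki$ to be transported rightward.

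For $b_{k_j}$ with $j\geq 2$ I would induct on $j$. Setting $A_j:=\sum_{l=1}^{j}a_{k_l}$ (so $A_j=A_{j-1}+a_{k_j}$), the inductive content is: just before the application of relation (ii) at the critical index $k_j$, the residual has the form $x_{\alpha}(A_{j-1})\cdot(\text{torus})$, while the cumulative torus rescalings from the previous steps (propagated via relation (i)) have turned the effective coefficient at $g_{k_j}$ from $a_{k_j}$ into $a_{k_j}/A_{j-1}^{\,2}$. Relation (ii),
$$x_{\alpha}(t)\,x_{-\alpha}(t')\;=\;x_{-\alpha}\!\bigl(t'/(1+tt')\bigr)(1+tt')^{\alpha^\vee}\,x_{\alpha}\!\bigl(t/(1+tt')\bigr),$$
applied with $t=A_{j-1}$ and $t'=a_{k_j}/A_{j-1}^{\,2}$, gives $1+tt'=A_j/A_{j-1}$, whence
$$b_{k_j}\;=\;\frac{a_{k_j}/A_{j-1}^{\,2}}{A_j/A_{j-1}}\;=\;\frac{a_{k_j}}{A_{j-1}A_j},$$
and the new torus factor $(A_j/A_{j-1})^{\alpha^\vee}$ refreshes the induction hypothesis for $k_{j+1}$. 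Between the critical indices the residual passes intermediate positions exactly as in the proof of Proposition \ref{general_2}: by Lemmas \ref{new_terms_1} and \ref{new_terms_2} the new terms generated through the Chevalley commutator (iv) either merge into coefficients at positive crossings (hence do not affect any $b_{k_l}$) or propagate harmlessly further, while the torus part of the residual rescales the intermediate coordinates via (i). Proposition \ref{gallery_origin} guarantees at each $k_j$ that the affine root of the arriving residual is exactly the opposite of that of $g_{k_j}$, which is precisely the configuration relation (ii) requires. The non-vanishing hypothesis $\sum_{n\in I}a_n\neq 0$ from Proposition \ref{general_2} ensures every $A_l$ is invertible, so all denominators are defined.

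The main obstacle is the clean bookkeeping of how each new torus factor $(A_j/A_{j-1})^{\alpha^\vee}$ subsequently acts, via relation (i), on all downstream chart-coordinates, with exponents $\langle\beta,\alpha^\vee\rangle$ depending on the local roots $\beta$ between $k_j$ and $k_{j+1}$, and the Weyl-reflection of $\alpha^\vee$ picked up at every $s_{i_m}$ the torus crosses. Verifying that these interleaved rescalings collapse to exactly $A_{j}^{\,-2}$ on $a_{k_{j+1}}$ is the hygiene-heavy step; once it is pinned down, the single application of (ii) delivers the stated formula.
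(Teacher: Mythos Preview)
Your approach is essentially the same as the paper's: rewrite $x_{-\alpha}(a_u)$ via relation (iii), then push the residual $x_{\alpha_{i_u}}(a_u)\,(\pm a_u)^{\alpha_{i_u}^\vee}$ rightward, applying relation (ii) at each critical index and inducting. The paper records the residual arriving at $k_j$ as $x_{\alpha_{i_{k_j}}}(A_{j-1})\,(-A_{j-1})^{\alpha_{i_{k_j}}^\vee}$ and verifies in one line that
\[
x_{\alpha}(A_{j-1})\,(-A_{j-1})^{\alpha^\vee}\,x_{-\alpha}(a_{k_j})
\;=\;
x_{-\alpha}\!\Bigl(\tfrac{a_{k_j}}{A_{j-1}A_j}\Bigr)\,x_{\alpha}(A_j)\,(-A_j)^{\alpha^\vee},
\]
so the residual keeps the same shape with $A_{j-1}$ replaced by $A_j$.

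Your last paragraph overstates the difficulty. The ``hygiene-heavy'' step you worry about is not actually heavy: the residual is the package $x_\gamma(A)\,(-A)^{\gamma^\vee}$, and when it passes an intermediate position $l$ you only conjugate by $\delta_l\in W^{\mathfrak a}$ (modulo Chevalley commutator terms, which Proposition \ref{general_2} already shows are irrelevant at critical indices). Conjugation by a Weyl element sends $\gamma\mapsto\delta_l^{-1}\gamma$ and $\gamma^\vee\mapsto\delta_l^{-1}\gamma^\vee$ simultaneously, leaving the scalars $A$ and $-A$ untouched. So the coroot in the torus part stays matched to the root in the unipotent part all the way to $k_{j+1}$, where Proposition \ref{gallery_origin} says that root is $\alpha_{i_{k_{j+1}}}$; the rescaling of $a_{k_{j+1}}$ is then a single application of relation (i) with exponent $\langle -\alpha_{i_{k_{j+1}}},\alpha_{i_{k_{j+1}}}^\vee\rangle=-2$, giving exactly $A_j^{-2}$. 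There is no accumulation of exponents from intermediate roots to track. With this observation your argument is complete and matches the paper's.
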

\begin{proof}
The equality $ b_u = a_u^{-1}$ is obvious by the proof of Proposition \ref{general_2}. As proved in Proposition \ref{general_2}, when moving $x_{\alpha_{i_u}}(a_u) a_u^{\alpha_{i_u}^\vee}$ to the right the terms that are created through the Chevalley commutator formula will play no role when looking at the critical indices. By Proposition \ref{gallery_origin}, we know that $\delta_{k_2-1} \cdots \delta_{k_1+1} \alpha_{k_1} = \alpha_{k_2}$ and thus
\begin{eqnarray*}
& &x_{\alpha_{i_{k_2}}}(a_{k_1})(-a_{k_1})^{\alpha_{i_{k_2}}^\vee} x_{-\alpha_{i_{k_2}}}(a_{k_2})\\
&=& x_{-\alpha_{i_{k_2}}} \left(a_{k_1}^{-1}a_{k_2}(a_{k_1}+a_{k_2})^{-1} \right) x_{\alpha_{i_{k_2}}}(a_{k_1}+a_{k_2}) (-(a_{k_1}+a_{k_2}))^{\alpha_{i_{k_2}}^\vee}.
\end{eqnarray*}
We repeat the process with the last two terms. Hence we have the same calculation with $a_{k_1}+a_{k_2}$ instead of $a_{k_1}$ for the next critical index. Thus, inductively, we obtain
$$b_{k_j} = \frac{a_{k_j}}{\left( \sum_{l=1}^{j-1} a_{k_l} \right) \left( \sum_{l=1}^j a_{k_l} \right)}. $$
\end{proof}

%%%%%%%%%%%%%%%%%%%%%%%%%%%%%%%%%%%%%%%%%%%%%%%%%%%%%%%%%%%%%%%%%%%%%%%%%%%%%%%%%%%%%%%%%%%%%%%%%%%%%%%%%%%%%%%%%%%%%%%%%%%%%%%%%%%%%%%%
%%%%%%%%%%%%%%%%%%%%%%%%%%%%%%%%%%%%%%%%%%%%%%%%%%%%%%%%%%%%%%%%%%%%%%%%%%%%%%%%%%%%%%%%%%%%%%%%%%%%%%%%%%%%%%%%%%%%%%%%%%%%%%%%%%%%%%%%
%%%%%%%%%%%%%%%%%%%%%%%%%%%%%%%%%%%%%%%%%%%%%%%%%%%%%%%%%%%%%%%%%%%%%%%%%%%%%%%%%%%%%%%%%%%%%%%%%%%%%%%%%%%%%%%%%%%%%%%%%%%%%%%%%%%%%%%%

As mentioned before we need to make modifications for the case that the first coefficient of the gallery is involved in the calculations.

\begin{proposition} \label{general_1_alt}
We assume that ${\boldsymbol{\mathfrak j}_\alpha^\delta}=0$. If 
$$ g_0(\underline{a_\beta}) = \prod_{\beta < 0, \delta_0^{-1}(\beta)<0}^{\leftarrow} x_\beta(a_\beta) \cdot \delta_0, $$
with $a_{\beta} \neq 0$ for all $\beta$, then $g \in \ku_{\kappa^0}^{s_\alpha}$.
\end{proposition}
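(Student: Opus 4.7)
My plan is to parallel the argument of Proposition~\ref{general_2}, adapted to the boundary case where the first critical index is the origin itself ($u_1=0$). In this case there is no interior fold at which to perform the standard fold-to-crossing manipulation; instead the entire coordinate change must be absorbed into the initial factor $g_0$. Since the cells $\ku_j$ for $j\geq 1$ in the definition of $\ku_\delta^w$ depend only on $\delta_j$ and not on $w$, the subsequent coordinates are unchanged and only $g_0$ has to be rewritten in the $s_\alpha$-factorization.

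First I would verify that $\delta_0^{-1}\alpha>0$. The hypothesis ${\boldsymbol{\mathfrak j}_\alpha^\delta}=0$ forces ${\boldsymbol{\mathfrak m}_\alpha^\delta}=0$ (since $\Delta_0'=F_f$ is the only face lying in an $\alpha$-hyperplane of minimal height, and it lies only in $H_{\alpha,0}$) and guarantees that $[0,{\boldsymbol{\mathfrak k}_\alpha^\delta}]$ is the first $\alpha$-directed section of $\delta$. The interior alcoves of this section are trapped in the strip $H_{\alpha,0}^+\cap H_{\alpha,1}^-$, so $\Delta_0=\delta_0\Delta_f$ lies on the positive side of $H_{\alpha,0}$, i.e.\ $\delta_0^{-1}\alpha>0$. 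In particular, $-\alpha$ belongs to $\Phi^-\cap\delta_0(\Phi^-)$ and the factor $x_{-\alpha}(a_{-\alpha})$ appears in the $w_0$-factorization of $g_0$ with $a_{-\alpha}\neq 0$ by the non-degeneracy hypothesis.

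Next I would use Chevalley's commutator formula to pull $x_{-\alpha}(a_{-\alpha})$ past the other factors $x_\beta(a_\beta)$ of $g_0$, producing a factorization of the form
$$ g_0 \;=\; x_{-\alpha}(b_{-\alpha}) \cdot \prod_{\beta\in(\Phi^-\setminus\{-\alpha\})\cap\delta_0(\Phi^-)} x_\beta(b_\beta)\cdot \delta_0, $$
which matches the pattern prescribed by $\ku_0^{s_\alpha}$: the lone factor $x_{-\alpha}$ occupies the first product (since $-\alpha$ is the unique negative root in $s_\alpha(\Phi^+)$ satisfying the sign condition), and the remaining factors fill the second product. Every pairwise swap generates auxiliary terms $x_{-i\alpha+j\beta}(\cdot)$ with $i,j\geq 1$; since both $-\alpha$ and $\beta$ are negative, the new roots $-i\alpha+j\beta$ are again negative and different from $-\alpha$, so they absorb into the existing factors of the product by further iterated commutations. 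The non-vanishing condition $a_\beta\neq 0$ for every $\beta$ ensures that the resulting rational change of coordinates $a_\bullet\mapsto b_\bullet$ is well-defined, playing the same role as the hypothesis $a_u\neq 0$ in the proof of Proposition~\ref{general_2}.

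\emph{Main obstacle.} The principal technical difficulty will be controlling the cascade of commutator terms: unlike in Proposition~\ref{general_2}, where a single fold is transformed, here several factors are reordered simultaneously and a long string of auxiliary terms can be created along each $\alpha$-string through each $\beta$. The key structural input that contains this cascade is that all new roots remain in the admissible index set $\Phi^-\cap\delta_0(\Phi^-)$: no contribution is produced to the positive-root slots of the $s_\alpha$-chart, and every commutator term remains internal to the factor $g_0$. Consequently no propagation into later cells $\ku_j$ is required, which is the distinctive simplification of this case compared to the proof of Proposition~\ref{general_2}, where Lemmas~\ref{new_terms_1} and~\ref{new_terms_2} had to be invoked to chase new terms through the interior of the gallery.
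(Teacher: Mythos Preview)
Your plan misidentifies what actually has to be done at the origin and, as a consequence, misses every substantive step of the paper's proof.

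You propose merely to \emph{reorder} the negative-root factors inside $g_0$, putting $x_{-\alpha}(b_{-\alpha})$ on the outside and keeping the Weyl element $\delta_0$ unchanged. But that is not what the proposition (as the paper proves and later uses it) requires. The paper moves $x_{-\alpha}(a_{-\alpha})$ to the \emph{right} and then applies relation~(iii),
\[
x_{-\alpha}(a_{-\alpha}) \;=\; x_{\alpha}(a_{-\alpha}^{-1})\,s_\alpha\,x_{\alpha}(a_{-\alpha})\,a_{-\alpha}^{\alpha^\vee},
\]
so that the leading Weyl element becomes $s_\alpha\delta_0$. The tail $x_{\alpha}(a_{-\alpha})\,a_{-\alpha}^{\alpha^\vee}$ is then pushed past $\delta_0$ and \emph{through the entire gallery}; Lemma~\ref{new_terms_1} is invoked exactly here to guarantee that the resulting commutator terms only add into coefficients at positive crossings. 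So your assertion that ``no propagation into later cells $\ku_j$ is required, which is the distinctive simplification of this case'' is the opposite of what happens: propagation occurs, and controlling it is the main work.

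There is a second step you do not anticipate. After the transformation the admissibility condition for the first factor becomes $(s_\alpha\delta_0)^{-1}\gamma<0$, not $\delta_0^{-1}\gamma<0$; the paper splits the remaining negative roots into $R_\alpha$ (which stay) and $R_\alpha'$ (which fail the new condition). The one-parameter subgroups for roots in $R_\alpha'$ must be pushed past $s_\alpha\delta_0$ into the gallery as well, and a separate argument (again along the lines of Lemma~\ref{new_terms_1}) shows that these contributions land only at positive crossings. Your plan of staying entirely inside $g_0$ never sees this phenomenon. The non-vanishing hypothesis $a_\beta\neq 0$ is used for the inversion $a_{-\alpha}\mapsto a_{-\alpha}^{-1}$ in relation~(iii) and for the later genericity arguments (Proposition~\ref{origin_parameter_change}, Remark~\ref{origin_improved}), not to legitimize a rational reordering inside $U^-$.
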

\begin{proof}
We start by moving the coefficient $x_{-\alpha}(a_{-\alpha})$ to the right such that it is located in front of $\delta_0$. Since, for positive integers $p$ and $q$ and a negative root $\beta$, $-p\alpha + q \beta$ is of height strictly smaller than both $-\alpha$ and $\beta$ and the set of roots in occurring in our product is closed under summation, we know that any new term that is created while moving $x_{-\alpha}(a_{-\alpha})$ to the right will add itself to one of the coefficients.

Thus we arrive at the following form
$$ g_0(\underline{a_\beta}) = \prod_{\beta < 0, \beta \neq -\alpha, \delta_0^{-1}(\beta)<0}^{\leftarrow} x_\beta(a_\beta') x_{-\alpha}(a_{-\alpha}) \cdot \delta_0, $$
where $a_\beta' = a_\beta + f(a_\gamma \mid ht(-\alpha) \geq ht(\gamma) > ht(\beta))$ and $f$ is a polynomial. This triangular pattern of coordinate changes will be important later on.

Next we proceed with our usual transformation of $x_{-\alpha}(a_{-\alpha})$. With our assumptions we can write $g_0(\underline{a_\beta})$ as
$$ g_0(\underline{a_\beta}) = \prod_{\beta < 0, \beta \neq -\alpha, \delta_0^{-1}(\beta)<0}^{\leftarrow} x_\beta(a_\beta') x_{\alpha}(a_{-\alpha}^{-1})s_\alpha x_\alpha(a_{-\alpha}) a_{-\alpha}^{\alpha^\vee} \cdot \delta_0. $$
Since $\delta_0^{-1}\alpha > 0$, we can move $x_\alpha(a_{-\alpha}) a_{-\alpha}^{\alpha^\vee}$ past $\delta_0$. The changes that occur afterwards when this term moves through the gallery are exactly the same as before and are dealt with by using Lemma \ref{new_terms_1}.

Hence we are left with 
$$ g_0(\underline{a_\beta}) = \prod_{\beta < 0, \beta \neq -\alpha, \delta_0^{-1}(\beta)<0}^{\leftarrow} x_\beta(a_\beta') x_{\alpha}(a_{-\alpha}^{-1}) \cdot s_\alpha\delta_0. $$
\noindent
Looking at the definition of $\ku_{\kappa^0}^{s_\alpha}$, we see that any $\gamma$ appearing in the leading coefficient needs to satisfy the following:
$$ \gamma <_{s_\alpha} 0 :\Leftrightarrow s_\alpha \gamma < 0 \ \text{ and } \ (s_\alpha \delta_0)^{-1}\gamma < 0. $$
This must be checked for the roots that appear.

The first of these conditions is obviously satisfied by all roots that we have. The second condition is not always satisfied and we need to get rid of those roots that don't satisfy it. Hence we divide our set of roots that appear into two disjoint subsets
$$R_\alpha = \{ \beta < 0 \mid \beta \neq -\alpha, \delta_0^{-1}\beta < 0, (s_\alpha\delta_0)^{-1}\beta < 0 \},$$
which are the ones that satisfy the second condition and
$$R_\alpha' = \{ \beta < 0 \mid \delta_0^{-1}\beta < 0, (s_\alpha\delta_0)^{-1}\beta > 0 \},$$
the ones that do not.

In Proposition \ref{origin_parameter_change} we take a closer look at the roots which we have to substitute for the ones in $R_\alpha'$. But for this proof we will just eliminate the coefficients that we don't need. Hence we will just move all elements corresponding to roots in $R_\alpha'$ past $x_\alpha(a_\alpha'')$ and $s_\alpha \delta_0$, starting with the one furthest to the left, hence the highest. Again we produce new terms, but as all of these correspond to negative roots, different from $-\alpha$, they satisfy the first condition above. If they don't satisfy the second condition above, we move them past $s_\alpha \delta_0$ as well, and leave them where they are if they do satisfy the condition.

As usual we have to look at how the new terms affect our gallery. In this case we cannot directly apply lemmas \ref{new_terms_1} or \ref{new_terms_2}, since we have not produced these new terms via Chevalley relations. Nevertheless the calculations are very similar and we will just outline them briefly.
Let $x_{\delta_0^{-1}s_\alpha\gamma}(c)$ be such that $\gamma < 0$ and $(s_\alpha \delta_0)^{-1}\gamma > 0$ with $c \in \IC$. With the same notations as in Lemma \ref{new_terms_1} we arrive at an equation of the form
$$\alpha_{i_s} = p \delta_{s-1} \cdots \delta_{1} \delta_0^{-1} s_\alpha\gamma + \sum_{h=1}^l q_h \delta_{s-1} \cdots \delta_{j_h} \beta_{j_h},$$
with $\beta_{j_h}=-\delta_{j_h}(\alpha_{i_{j_h}})$.
Again we apply $\delta_0 \cdots \delta_{j-1}$ and assume that the gallery has a negative crossing at $s$ or a positive folding. The classical part of the equation is then
$$\beta + \sum_{h=1}^l q_h \gamma_h = p s_\alpha\gamma, $$
for some $\beta > 0$ and $\gamma_h > 0$. Which is a contradiction as the left side is a positive linear combination of positive roots, while the right side is a positive multiple of a negative root. Hence the new terms can also only contribute to the coefficients at positive crossings. This completes the proof.
\end{proof}
%%%%%%%%%%%%%%%%%%%%%%%%%%%%%%%%%%%%%%%%%%%%%%%%%%%%%%%%%%%%%%%%%%%%%%%%%%%%%%%%%%%%%%%%%%%%%%%%%%%%%%%%%%%%%%%%%%%%%%%%%%%%%%%%%%%%%%%%
%%%%%%%%%%%%%%%%%%%%%%%%%%%%%%%%%%%%%%%%%%%%%%%%%%%%%%%%%%%%%%%%%%%%%%%%%%%%%%%%%%%%%%%%%%%%%%%%%%%%%%%%%%%%%%%%%%%%%%%%%%%%%%%%%%%%%%%%
%%%%%%%%%%%%%%%%%%%%%%%%%%%%%%%%%%%%%%%%%%%%%%%%%%%%%%%%%%%%%%%%%%%%%%%%%%%%%%%%%%%%%%%%%%%%%%%%%%%%%%%%%%%%%%%%%%%%%%%%%%%%%%%%%%%%%%%%
\begin{proposition}\label{general_2_alt}
Assume $u_r = 0$. Let $g=g(a) \in \ku_{\kappa^{r-1}}$.
Then $g \in \ku_{\kappa^r}^{s_\alpha}$, if for any $I \subset C([u_r,v_r])$ the following inequality holds:
$$ \sum_{n \in I} a_n \neq 0. $$
\end{proposition}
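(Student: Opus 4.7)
The plan is to combine the techniques from Propositions~\ref{general_1_alt} and~\ref{general_2}, since the assumption $u_r = 0$ forces us to modify both the leading coefficient $g_0$ (as in the former) and the crossing at $v_r$ (as in the latter). Note that $u_r = 0$ forces $r \geq 2$, so $[u_r, v_r]$ is a genuine $\alpha$-stable section with $v_r < u_{r-1} \leq p$. First, I would isolate the factor $x_{-\alpha}(a_0)$ from the product expression of $g_0$ by moving it past all other factors so that it sits immediately before $\delta_0$. Exactly as at the beginning of the proof of Proposition~\ref{general_1_alt}, every new term produced by the Chevalley commutator formula corresponds to a negative root of strictly smaller height and therefore only adds to an existing coefficient, yielding the triangular change $a_\beta \mapsto a_\beta'$.

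Next, I would apply the Bruhat-type identity $x_{-\alpha}(a_0) = x_\alpha(a_0^{-1})\,s_\alpha\,x_\alpha(a_0)\,a_0^{\alpha^\vee}$; this requires $a_0 \neq 0$, which is the hypothesis applied to $I = \{0\}$. The $s_\alpha$ then fuses with $\delta_0$ into the leading element $s_\alpha \delta_0$ of $\kappa^r_0$. The factor $x_\alpha(a_0)\,a_0^{\alpha^\vee}$ commutes past $\delta_0$ (since $\delta_0^{-1}\alpha > 0$) and is then propagated through the gallery exactly as in Proposition~\ref{general_2}. At the critical indices of $C([0,v_r])$ the recursion of Lemma~\ref{critical_calc} applies verbatim, yielding $b_{k_j} = a_{k_j}/\bigl((\sum_{l<j}a_{k_l})(\sum_{l\le j}a_{k_l})\bigr)$; the nonvanishing hypothesis on the partial sums $\sum_{n\in I}a_n$ guarantees that every denominator is nonzero. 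At position $v_r < p$ the accumulated term is converted from a negative crossing into a folding by the inverse Bruhat identity, and the residual positive-root term propagates beyond $v_r$ with new Chevalley terms controlled by Lemma~\ref{new_terms_2} exactly as in Proposition~\ref{general_2}.

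Finally, I would mimic the end of Proposition~\ref{general_1_alt} to remove the coefficients indexed by roots in $R_\alpha' = \{\beta < 0 \mid \delta_0^{-1}\beta < 0,\ (s_\alpha\delta_0)^{-1}\beta > 0\}$, since these are no longer allowed in the chart $\ku_{\kappa^r}^{s_\alpha}$. Moving them past $s_\alpha\delta_0$ from the highest root downward, the new terms produced correspond to negative roots distinct from $-\alpha$; by the same classical/imaginary sign argument as at the end of Proposition~\ref{general_1_alt}, these can only add to coefficients at positive crossings and never create a new folding or crossing of the wrong sign.

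The main obstacle is the bookkeeping required to interleave these three coordinate changes (factoring out $x_{-\alpha}(a_0)$ from $g_0$, propagation through $C([0,v_r])$ together with the crossing/folding swap at $v_r$, and removal of the $R_\alpha'$-coefficients) so that coefficients that have already been put into their final form are not perturbed by new terms created at a later stage. In particular, Lemma~\ref{new_terms_2} is stated for $\kappa^i$ with $i > 0$, so one must verify that its sign and height arguments still go through when the section under consideration starts at the origin and one is working simultaneously in the chart twisted by $s_\alpha$; this reduces to the same classical/imaginary part dichotomy used throughout Section~\ref{section_6} and so should cause no genuinely new difficulty.
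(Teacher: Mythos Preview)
Your proposal is correct and follows exactly the route the paper takes: the paper's own proof consists of two sentences stating that one combines Proposition~\ref{general_1_alt} (for the change at $\kappa^{r-1}_0$) with Proposition~\ref{general_2} (for the propagation through the gallery), invoking Lemmas~\ref{new_terms_1} and~\ref{new_terms_2} for control of the new terms. Your write-up is in fact considerably more detailed than the paper's, and the obstacles you flag (the interleaving bookkeeping and the applicability of Lemma~\ref{new_terms_2} when the section starts at the origin) are genuine things to check but, as you note, reduce to the same sign/height arguments already used and are glossed over in the paper.
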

\begin{proof}
The proof is a combination of the proofs of Proposition \ref{general_1_alt} and Proposition \ref{general_2}. The way $\kappa^{r-1}_0$ is changed is exactly the same as in Proposition \ref{general_1_alt}, while the rest of the calculations that the occurring terms can be moved to the right in the gallery and do not change the structure of the gallery is exactly as in Proposition \ref{general_2} and again uses lemmas \ref{new_terms_1} and \ref{new_terms_2}.
\end{proof}
%%%%%%%%%%%%%%%%%%%%%%%%%%%%%%%%%%%%%%%%%%%%%%%%%%%%%%%%%%%%%%%%%%%%%%%%%%%%%%%%%%%%%%%%%%%%%%%%%%%%%%%%%%%%%%%%%%%%%%%%%%%%%%%%%%%%%%%%
%%%%%%%%%%%%%%%%%%%%%%%%%%%%%%%%%%%%%%%%%%%%%%%%%%%%%%%%%%%%%%%%%%%%%%%%%%%%%%%%%%%%%%%%%%%%%%%%%%%%%%%%%%%%%%%%%%%%%%%%%%%%%%%%%%%%%%%%
%%%%%%%%%%%%%%%%%%%%%%%%%%%%%%%%%%%%%%%%%%%%%%%%%%%%%%%%%%%%%%%%%%%%%%%%%%%%%%%%%%%%%%%%%%%%%%%%%%%%%%%%%%%%%%%%%%%%%%%%%%%%%%%%%%%%%%%%
Combining the propositions we obtain at the following statement.

\begin{theorem} \label{case_simple_refl}
Let $\delta \in \Gamma_{LS}^+(\gamma_\lambda)$. There exists a dense open subset $O_{id} \subset C(\delta)$ such that 
$$O_{id} \subset \bigcap_{\alpha \text{ simple roots}} \ku_{\Xi_{s_\alpha}(\delta)}^{s_\alpha}.$$
\end{theorem}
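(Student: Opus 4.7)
The plan is to construct, for each simple root $\alpha$, an open dense subset $O_{id,\alpha}\subset C(\delta)$ with $O_{id,\alpha}\subset \ku_{\Xi_{s_\alpha}(\delta)}^{s_\alpha}$, and then take
$$O_{id} := \bigcap_{\alpha\text{ simple}} O_{id,\alpha}.$$
Since there are only finitely many simple roots and $C(\delta)$ is irreducible (by Theorem \ref{BB_cell} it is isomorphic to a product of $\IC$'s and $\IC^*$'s), a finite intersection of open dense subsets of $C(\delta)$ is again open and dense. Therefore the theorem reduces to producing each $O_{id,\alpha}$.

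Fix a simple root $\alpha$ and recall the chain $\kappa^0=\delta,\kappa^1,\ldots,\kappa^r=\Xi_{s_\alpha}(\delta)$ defined before Proposition \ref{general_2}. I would proceed inductively along this chain. Assume $g\in\ku_{\kappa^{i-1}}$ with coordinates $a^{(i-1)}_{\bullet}$ in that chart. Proposition \ref{general_2} (when $u_i>0$), or the combination of Propositions \ref{general_2_alt} and \ref{general_1_alt} (when $u_r=0$, which covers the case $\boldsymbol{\mathfrak j}_\alpha^\delta=0$), guarantees $g\in\ku_{\kappa^i}$ as soon as a finite list of inequalities holds: every partial sum $\sum_{n\in I}a_n^{(i-1)}\neq 0$ for $\emptyset\neq I\subset C([u_i,v_i])$, and, in the boundary case, the non-vanishing of the coefficients $a_\beta$ in the leading factor of $g_0$ required by Proposition \ref{general_1_alt}.

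By Lemma \ref{critical_calc}, the change from the $\kappa^{i-1}$- to the $\kappa^i$-coordinates is given by explicit rational formulas, so by induction each inequality imposed at step $i$ pulls back to a nontrivial polynomial non-vanishing condition on the original chart coordinates of $C(\delta)$. Define $O_{id,\alpha}$ as the simultaneous non-vanishing locus of the resulting finite collection of rational functions on $C(\delta)$. It is nonempty: for instance, taking all free coordinates equal to a common nonzero value renders every partial sum $\sum_{n\in I}a_n$ nonzero and every required $a_\beta$ nonzero. By irreducibility of $C(\delta)$, the locus $O_{id,\alpha}$ is therefore open and dense, and by iterating the cited propositions it is contained in $\ku_{\Xi_{s_\alpha}(\delta)}^{s_\alpha}$.

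The main technical obstacle is not in the present theorem itself but in what has already been done: one must control, via Lemma \ref{critical_calc} and the Chevalley/commutator manipulations in Propositions \ref{general_2}, \ref{general_2_alt}, \ref{general_1_alt}, that the successive coordinate changes are genuinely birational and that the stray terms produced by the commutator formula can be absorbed into existing coefficients without spoiling the chart $\ku_{\kappa^i}$. Once this has been verified stepwise, the passage to $O_{id}$ is essentially a book-keeping assembly: a finite intersection of nonempty Zariski-opens in an irreducible variety.
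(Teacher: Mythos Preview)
Your proposal is correct and follows essentially the same route as the paper: iterate Propositions~\ref{general_2}, \ref{general_1_alt}, \ref{general_2_alt} along the chain $\kappa^0,\dots,\kappa^r$, collect the finitely many non-vanishing conditions, and intersect over the simple roots using irreducibility of $C(\delta)$.

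The one place where the paper is sharper is in how compatibility of the conditions across the steps is argued. The paper observes directly that the transformation performed at position $u_i$ only modifies coordinates at positions $\geq u_i$, whereas the inequalities required at step $i$ involve only coordinates at positions in $C([u_i,v_i])\subset[u_i,v_i]$; since $v_i<u_{i-1}$, those coordinates are still the \emph{original} ones when step $i$ is reached. Hence the inequalities at the various steps concern pairwise disjoint subsets of the original parameters and are trivially simultaneously satisfiable---no pullback bookkeeping is needed. Your pullback-plus-irreducibility argument also works, but note that your explicit witness ``set every free coordinate to a common nonzero value $c$'' tacitly relies on this same observation: it shows $\sum_{n\in I}a_n\neq 0$ for the \emph{original} $a_n$, not a priori for the transformed $a_n^{(i-1)}$, unless you already know these coincide on $C([u_i,v_i])$. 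If you want to keep the pullback formulation, it is cleaner to drop the explicit point and just say that each pulled-back condition is a nonzero rational function (the chart changes being birational), so its non-vanishing locus is open dense in the irreducible $C(\delta)$.
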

\begin{proof}
This follows from the change of coordinates that occur during the proofs of propositions \ref{general_2}, \ref{general_1_alt}, and \ref{general_2_alt}. When we change the coordinates at the position ${\boldsymbol{\mathfrak j}_\alpha^\delta}$ or $u_i$ by applying one of these propositions, we only change those parameters that occur afterwards and never before.

Hence we can start with the transformation at ${\boldsymbol{\mathfrak j}_\alpha^\delta}$ and then proceed with the transformations at the stable sections $[u_i,v_i]$ in increasing order. This yields a finite set of inequalities for our parameters, with each proposition giving us inequalities for different, pairwise non-intersecting, sets of parameters. Thus these inequalities can never contradict themselves and we obtain a dense subset where they are all satisfied.
\end{proof}
%%%%%%%%%%%%%%%%%%%%%%%%%%%%%%%%%%%%%%%%%%%%%%%%%%%%%%%%%%%%%%%%%%%%%%%%%%%%%%%%%%%%%%%%%%%%%%%%%%%%%%%%%%%%%%%%%%%%%%%%%%%%%%%%%%%%%%%%
%%%%%%%%%%%%%%%%%%%%%%%%%%%%%%%%%%%%%%%%%%%%%%%%%%%%%%%%%%%%%%%%%%%%%%%%%%%%%%%%%%%%%%%%%%%%%%%%%%%%%%%%%%%%%%%%%%%%%%%%%%%%%%%%%%%%%%%%
%%%%%%%%%%%%%%%%%%%%%%%%%%%%%%%%%%%%%%%%%%%%%%%%%%%%%%%%%%%%%%%%%%%%%%%%%%%%%%%%%%%%%%%%%%%%%%%%%%%%%%%%%%%%%%%%%%%%%%%%%%%%%%%%%%%%%%%%
\subsection*{Independence of parameters} \label{parameter_change}

By applying propositions \ref{general_2}, \ref{general_1_alt}, and \ref{general_2_alt} we have seen that for a suitably general gallery $g$ in $C(\delta)$ we can rewrite the coordinates with respect to $\Xi_{s_\alpha}(\delta)$ and can easily see that $r_{w_0s_\alpha}(g)=\Xi_{s_\alpha}(\delta)$. To use this as the basis for an inductive argument, we need that such a gallery $g$ can also be chosen suitably general enough to apply the coordinate changes from $\Xi_{s_\alpha}(\delta)$ to $\Xi_{s_\alpha s_\beta}(\delta)$ for any simple root $\beta \neq \alpha$.

By applying either Proposition \ref{general_1_alt} if needed and afterwards propositions \ref{general_2} and \ref{general_2_alt} we obtain the followings.

\begin{proposition} \label{independence_non_origin}
For $g = g(a) \in O_{id}$ and $g = g(b) \in \ku_{\Xi_{s_\alpha}(\delta)}^{s_\alpha}$. We have relations for $j > 1$:\\
\textbf{$\Xi_{s_\alpha}(\delta)$ has a positive crossing at $j$:} 
\begin{enumerate}
\item If $j = u_i$ for $1 \leq i \leq r$: $b_j = f_j(a_s \mid s \in I^c_{<j}) a_j^{-1} + G_j(a_k \mid k<j)$
\item Otherwise: $b_j = f_j(a_s \mid s \in I^c_{<j}) a_j + G_j(a_k \mid k<j)$
\end{enumerate}
\textbf{$\Xi_{s_\alpha}(\delta)$ has a negative crossing at $j$:} $b_j = 0$.\\
\textbf{$\Xi_{s_\alpha}(\delta)$ has a positive folding at $j$:}
\begin{enumerate}
\item If $j \in C([u_i,v_i])$ for some $1 \leq i \leq r$: 
$$ b_j = f_j(a_s \mid s \in I^c_{<j}) a_j \left( \sum_{s \in C([u_i,v_i]), s < j} a_s \right)^{-1}  \left( \sum_{s \in C([u_i,v_i]), s \leq j} a_s \right)^{-1}.$$
\item If $j = v_i$ for some $i > 1$: $ b_j = f(a_s \mid s \in I^c_{<j}) \left( \sum_{s \in C([u_i,v_i])} a_{s} \right)^{-1}.$
\item Otherwise: $b_j = f(a_s \mid s \in I^c_{<j}) a_j.$
\end{enumerate}
Here $f_j$ denotes a non-zero rational function and $G_j$ a rational function.
\end{proposition}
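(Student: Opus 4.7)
The plan is to deduce these formulas by carefully tracking the coordinate transformations already effected in the proofs of Propositions \ref{general_2}, \ref{general_1_alt}, and \ref{general_2_alt}. Recall that the passage from $\delta$ to $\Xi_{s_\alpha}(\delta)$ is decomposed along the chain $\kappa^0 = \delta,\kappa^1,\ldots,\kappa^r = \Xi_{s_\alpha}(\delta)$, and each step $\kappa^{i-1}\rightsquigarrow\kappa^i$ is implemented by one of those three propositions applied at the stable section $[u_i,v_i]$. My approach is to perform the transformations in order, extract the leading behaviour at every position $j$, and then collect the resulting formulas.

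First I would establish the triangularity of the composite coordinate change, namely that $b_j$ depends only on $(a_s)_{s\leq j}$. This is a direct consequence of the Chevalley commutator formula (4) of Section \ref{section_2}: when a factor $x_\gamma(c)$ is moved rightward past $x_{\alpha_{i_j}}(a_j)$, the newly created terms correspond to positive integer combinations $p\gamma+q\alpha_{i_j}$ inserted strictly to the right of position $j$, so no parameter at position $j$ picks up a dependence on $a_s$ with $s>j$. Combined with Remark \ref{order_directed} (which fixes the ordering of the stable sections relative to ${\boldsymbol{\mathfrak j}_\alpha^\delta}$ and ${\boldsymbol{\mathfrak t}_\alpha^\delta}$), this yields the required strictly lower-triangular form.

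Second I would read off the leading coefficient at each position, case by case. At $j=u_i$, relation (3) of Section \ref{section_2} (applied as $x_{-\alpha}(a)=x_\alpha(a^{-1})s_\alpha x_\alpha(a)a^{\alpha^\vee}$ in the proof of Proposition \ref{general_2}) directly gives the factor $a_{u_i}^{-1}$; the additional outside factor $f_j$ collects the $T$-rescalings that previous applications of the same identity at earlier critical indices have induced via relation (1) of Section \ref{section_2}, and it is a nonzero monomial in $(a_s)_{s\in I^c_{<j}}$ because each root-pairing exponent is an integer and the $a_s$ for $s\in I^c$ are nonzero on the dense open subset $O_{id}$. At an internal critical index $k_l$ of $[u_i,v_i]$, Lemma \ref{critical_calc} gives exactly the telescoping quotient $a_{k_l}/\bigl((\sum_{m<l}a_{k_m})(\sum_{m\leq l}a_{k_m})\bigr)$, to which the same $T$-rescaling from earlier sections attaches as $f_j$. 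At $j=v_i$ with $i>1$, the inverse Chevalley relation applied to the accumulated term $x_{\alpha_{i_{v_i}}}(A)(-A)^{\alpha^\vee_{i_{v_i}}}s_{i_{v_i}}$ with $A=\sum_{s\in C([u_i,v_i])}a_s$ (the telescoped sum from Lemma \ref{critical_calc}) produces $x_{-\alpha_{i_{v_i}}}(A^{-1})$, yielding the prescribed reciprocal. All remaining positions (the ``otherwise'' cases) are neither touched at a critical index nor at a turning point $u_i$ or $v_i$, so they acquire only the multiplicative $T$-rescaling $f_j$ and the additive Chevalley corrections $G_j$ involving strictly earlier parameters, controlled by Lemmas \ref{new_terms_1} and \ref{new_terms_2}. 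Finally, at a negative crossing $j$ of $\Xi_{s_\alpha}(\delta)$, the corresponding coordinate in the chart $\ku_{\Xi_{s_\alpha}(\delta)}^{s_\alpha}$ is by construction the coefficient of an $x_{-\alpha_{i_j}}$ factor; the transformation produces no such factor at this position (the Chevalley calculation, together with Lemmas \ref{new_terms_1} and \ref{new_terms_2}, only deposits positive-root contributions at positive crossings), so $b_j=0$.

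The main obstacle I anticipate is the systematic bookkeeping of precisely which $T$-rescalings contribute at which position, particularly at indices lying past the last stable section, where the cumulative $T$-parts from all previous steps must be aggregated. This is however methodical and follows from the section ordering of Remark \ref{order_directed} together with the pairing formula (1) of Section \ref{section_2}; no conceptual input beyond the three coordinate-change propositions and Lemmas \ref{new_terms_1}, \ref{new_terms_2}, \ref{critical_calc} is needed.
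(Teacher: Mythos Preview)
Your proposal is correct and follows exactly the paper's approach: the paper's proof is the single sentence ``This follows immediately from the proofs of propositions \ref{general_2}, \ref{general_1_alt}, and \ref{general_2_alt},'' and your write-up simply unpacks that sentence by tracking the triangularity, the $T$-rescalings, and the explicit formulas from Lemma \ref{critical_calc} case by case. There is nothing to add.
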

\begin{proof}
This follows immediately from the proofs of propositions \ref{general_2}, \ref{general_1_alt}, and \ref{general_2_alt}.
\end{proof}

\begin{corollary}
Let $g \in O_{id}$, then $r_{w_0s_\alpha}(g) = \Xi_{s_\alpha}(\delta)$ for all simple roots $\alpha$.
\end{corollary}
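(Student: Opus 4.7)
The plan is to recognise $g$ as a point of the Bialynicki--Birula cell $\hat{r}_{w_0 s_\alpha}^{-1}(\Xi_{s_\alpha}(\delta))$ by reading off the defining conditions inside the chart $\ku_{\Xi_{s_\alpha}(\delta)}^{s_\alpha}$. The starting point is Theorem \ref{case_simple_refl}: for $g \in O_{id}$ this theorem guarantees $g \in \ku_{\Xi_{s_\alpha}(\delta)}^{s_\alpha}$, so $g$ can be rewritten in the coordinates $(b_j)$ of that chart. I would then invoke the analogue of Theorem \ref{BB_cell} for the retraction $r_{w_0 s_\alpha}$ applied to the LS-gallery $\Xi_{s_\alpha}(\delta) \in \Gamma_{LS}^{s_\alpha}(\gamma_\lambda)$: this cuts $\hat{r}_{w_0 s_\alpha}^{-1}(\Xi_{s_\alpha}(\delta))$ out of $\ku_{\Xi_{s_\alpha}(\delta)}^{s_\alpha}$ by the requirement that $b_j = 0$ at every non-load-bearing crossing and $b_j$ be non-zero at every load-bearing position of $\Xi_{s_\alpha}(\delta)$.

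The actual verification is a direct reading of Proposition \ref{independence_non_origin}. At positions where $\Xi_{s_\alpha}(\delta)$ has a negative crossing the proposition records $b_j = 0$, exactly the vanishing demanded by the fiber description. At positive crossings and at foldings, $b_j$ is given as $f_j(a_s \mid s \in I^c_{<j}) \cdot a_j^{\pm 1} + G_j$, or by one of the explicit formulae involving the sums $\sum_{s \in C([u_i,v_i])} a_s$ at the critical indices and at $v_i$; in every case $f_j$ is a non-zero rational function and $G_j$ depends only on variables $a_k$ with $k<j$. Since $g \in O_{id}$ was constructed precisely so that every denominator appearing in Propositions \ref{general_2}, \ref{general_1_alt}, and \ref{general_2_alt} is non-zero, all these $b_j$ are non-zero and $g$ satisfies every fiber condition.

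The principal obstacle when writing out the proof is the bookkeeping at the critical indices in $C([u_i,v_i])$ and at the distinguished endpoints $u_i$, $v_i$, where Proposition \ref{independence_non_origin} splits into four sub-cases. One has to match each sub-case of the formula for $b_j$ to the corresponding type of alcove (positive crossing, negative crossing, or positive folding; load-bearing or not) of $\Xi_{s_\alpha}(\delta)$ in the fiber description. The triangular structure of the formulae --- each $b_j$ depends only on $a_s$ with $s \in I^c_{<j}$ together with $a_k$ for $k<j$ --- guarantees that the conditions at distinct indices do not interfere, so once the case analysis is carried out the corollary reduces formally to the combination of Theorem \ref{case_simple_refl} and Proposition \ref{independence_non_origin}.
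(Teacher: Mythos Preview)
Your approach is exactly the one the paper takes: invoke Theorem~\ref{case_simple_refl} to place $g$ in the chart $\ku_{\Xi_{s_\alpha}(\delta)}^{s_\alpha}$, then read off from Proposition~\ref{independence_non_origin} that the new coordinates $(b_j)$ satisfy the fibre description of Theorem~\ref{BB_cell} (in its $s_\alpha$-version). The paper's proof is the single sentence ``Combining Proposition~\ref{independence_non_origin} and Theorem~\ref{BB_cell}'', and your proposal is a correct unpacking of that sentence.

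One small inaccuracy: you state that the fibre description demands $b_j\neq 0$ at \emph{every} load-bearing position. In fact Theorem~\ref{BB_cell} only requires $b_j\neq 0$ at positions $j\in J_{-\infty}^-$ (positive foldings); at positive crossings $j\in J_{-\infty}^+$ the coordinate $b_j$ is unconstrained. This matters because for a generic positive crossing the formula $b_j=f_j(a_s)\,a_j+G_j(a_k)$ from Proposition~\ref{independence_non_origin} gives no reason for $b_j$ to be non-zero. Fortunately nothing is needed there, so your argument survives; but the case analysis you should actually carry out is: $b_j=0$ at negative crossings (immediate from the proposition), and $b_j\neq 0$ at positive foldings (where the explicit formulae show $b_j$ is a non-zero rational function times $a_j$ or times $(\sum a_s)^{-1}$, both non-vanishing on $O_{id}$).
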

\begin{proof}
Let $\alpha$ be a simple root and $g = g(b) \in \ku_{\Xi_{s_\alpha}(\delta)}^{s_\alpha}$. Combining Proposition \ref{independence_non_origin} and Theorem \ref{BB_cell} we have $g \in r_{w_0s_\alpha}^{-1}(\Xi_{s_\alpha}(\delta))$.
\end{proof}
%%%%%%%%%%%%%%%%%%%%%%%%%%%%%%%%%%%%%%%%%%%%%%%%%%%%%%%%%%%%%%%%%%%%%%%%%%%%%%%%%%%%%%%%%%%%%%%%%%%%%%%%%%%%%%%%%%%%%%%%%%%%%%%%%%%%%%%%
%%%%%%%%%%%%%%%%%%%%%%%%%%%%%%%%%%%%%%%%%%%%%%%%%%%%%%%%%%%%%%%%%%%%%%%%%%%%%%%%%%%%%%%%%%%%%%%%%%%%%%%%%%%%%%%%%%%%%%%%%%%%%%%%%%%%%%%%
%%%%%%%%%%%%%%%%%%%%%%%%%%%%%%%%%%%%%%%%%%%%%%%%%%%%%%%%%%%%%%%%%%%%%%%%%%%%%%%%%%%%%%%%%%%%%%%%%%%%%%%%%%%%%%%%%%%%%%%%%%%%%%%%%%%%%%%%

We also need to look at the parameters at the first position of our gallery and have seen in propositions \ref{general_1_alt} that the changes are more complicated and can not be controlled in such detail as the other ones.

We want to make a short observation on the question for which roots we have to exchange the ones in $R_\alpha'$. 

\begin{lemma} \label{roots_first_term}
Suppose $\delta_0^{-1} \alpha > 0$. For  $R'=\{ \beta \in \Phi \mid \beta <_{s_\alpha} 0, (s_\alpha \delta_0)^{-1}\beta < 0 \}$ it holds
$$R' = R_\alpha \cup \{\alpha \} \cup s_\alpha R_\alpha',$$
with $R_\alpha$ and $R_\alpha'$ as in the proof of Proposition \ref{general_1_alt}.
\end{lemma}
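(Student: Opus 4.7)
The plan is to rewrite $R'$ via the substitution $\gamma = s_\alpha \beta$ and then compare the result with the partition of the roots appearing in the first coefficient of $g_0$. Since $s_\alpha^2 = \mathrm{id}$, one has $(s_\alpha \delta_0)^{-1} = \delta_0^{-1} s_\alpha$, so the defining conditions of $R'$ translate to $s_\alpha \beta < 0$ and $\delta_0^{-1}(s_\alpha \beta) < 0$. Setting $\gamma = s_\alpha \beta$ this gives
\[
R' \;=\; s_\alpha \, T, \qquad T \;:=\; \{\gamma \in \Phi \mid \gamma < 0,\ \delta_0^{-1}\gamma < 0\}.
\]

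Next I would partition $T$ according to the sign of $(s_\alpha \delta_0)^{-1}\gamma$ (which is never zero, as Weyl elements send roots to roots). Under the hypothesis $\delta_0^{-1}\alpha > 0$, the root $\gamma = -\alpha$ lies in $T$ but is excluded from both $R_\alpha$ and $R_\alpha'$ by the clause $\beta \neq -\alpha$ appearing in the proof of Proposition \ref{general_1_alt}. Every other $\gamma \in T$ falls into $R_\alpha$ (resp. $R_\alpha'$) precisely when $(s_\alpha \delta_0)^{-1}\gamma$ is negative (resp. positive). Hence one has a disjoint decomposition
\[
T \;=\; R_\alpha \,\sqcup\, \{-\alpha\} \,\sqcup\, R_\alpha'.
\]

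Applying $s_\alpha$ and using $s_\alpha(-\alpha) = \alpha$, the claim is reduced to the stability $s_\alpha R_\alpha = R_\alpha$. This is the only substantive step, and it follows from two standard observations: since $\alpha$ is simple, $s_\alpha$ permutes $\Phi^- \setminus \{-\alpha\}$, so it respects the condition $\beta < 0,\ \beta \neq -\alpha$; and the two remaining conditions $\delta_0^{-1}\beta < 0$ and $(s_\alpha \delta_0)^{-1}\beta < 0$ are swapped by $\beta \mapsto s_\alpha \beta$ (again using $(s_\alpha \delta_0)^{-1} = \delta_0^{-1} s_\alpha$), so their conjunction is preserved. The reverse inclusion is automatic from $s_\alpha^2 = \mathrm{id}$.

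There is no real obstacle beyond bookkeeping: the only place one must be careful is to remember that the definition of $R_\alpha'$ in Proposition \ref{general_1_alt} tacitly excludes $-\alpha$, and it is exactly this exclusion that produces the isolated summand $\{\alpha\}$ in the final formula $R' = R_\alpha \cup \{\alpha\} \cup s_\alpha R_\alpha'$.
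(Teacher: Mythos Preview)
Your argument is correct. The paper's own proof is a one-liner: it verifies that each of the three pieces on the right-hand side lies in $R'$ ``by definition'' and then asserts that the two sides have the same cardinality. Your route is more explicit and arguably cleaner: you produce the bijection $R' = s_\alpha T$ directly, decompose $T$, and isolate the one nontrivial step $s_\alpha R_\alpha = R_\alpha$, which is precisely the content the paper hides behind ``a short moment of thought'' about cardinalities. Both arguments rest on the same two standard facts (that $s_\alpha$ permutes $\Phi^- \setminus \{-\alpha\}$ for $\alpha$ simple, and that $(s_\alpha\delta_0)^{-1} = \delta_0^{-1}s_\alpha$), so the difference is purely one of packaging: inclusion-plus-counting versus an explicit bijection.

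One minor remark on your last paragraph: the set-builder formula for $R_\alpha'$ written in the proof of Proposition~\ref{general_1_alt} does not literally contain the clause $\beta \neq -\alpha$, and under the hypothesis $\delta_0^{-1}\alpha > 0$ the root $-\alpha$ does satisfy all three written conditions. You are nevertheless right that the exclusion is intended, since $R_\alpha$ and $R_\alpha'$ are introduced there as a partition of the roots \emph{other than} $-\alpha$ appearing in the product. Either reading yields the stated equality; the only difference is whether the summand $\{\alpha\}$ in $R_\alpha \cup \{\alpha\} \cup s_\alpha R_\alpha'$ is redundant.
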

\begin{proof}
The right side is a subset of the left side by definition and a short moment of thought shows that both sides have the same cardinality.
\end{proof}

To control the changes of parameters at the first coefficient of our gallery we make the following simple geometric observation.

\begin{proposition} \label{origin_parameter_change}
Suppose $\delta_0^{-1} \alpha > 0$. Let $U_e = U^- \delta_0 B$ and $U_\alpha = (U^-)^{s_\alpha} s_\alpha \delta_0 B$. Then $\dimension (U_e) = \dimension (U_\alpha)$ and $U_e \cap U_\alpha$ is dense in $U_e$ and in $U_\alpha$. 
\end{proposition}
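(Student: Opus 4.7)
My plan is to deduce the dimension equality from the relation $U_\alpha = s_\alpha U_e$ and then to establish density of the intersection by exhibiting an explicit dense open subset of $U_e$ that already lies in $U_\alpha$, reusing the opening move of the proof of Proposition \ref{general_1_alt}.

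First, I would compute
$$U_\alpha = (U^-)^{s_\alpha} s_\alpha \delta_0 B = s_\alpha U^- s_\alpha^{-1} \cdot s_\alpha \delta_0 \cdot B = s_\alpha U^- \delta_0 B = s_\alpha U_e,$$
so $U_\alpha$ is the left translate of $U_e$ by $s_\alpha$. Since left multiplication by $s_\alpha$ is an automorphism of $G$, we obtain $\dim U_\alpha = \dim U_e$, and both are irreducible of common dimension $d$ as images of the irreducible variety $U^- \times B$ under multiplication. With the dimensions matched, it is enough to produce a dense open subset $O \subset U_e$ with $O \subset U_\alpha$: density of $U_e \cap U_\alpha$ in $U_e$ is then automatic, and density in $U_\alpha$ follows because a constructible subset of the irreducible variety $U_\alpha$ of full dimension $d$ is dense.

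To build $O$, I would use that $\delta_0^{-1}\alpha > 0$ is equivalent to $-\alpha \in \{\beta < 0 : \delta_0^{-1}\beta < 0\}$, so that $U_{-\alpha}$ appears as one of the factors in the standard parameterization of $U_e$. Let $O$ be the open locus in $U_e$ on which the coordinate $a_{-\alpha}$ is nonzero. On $O$, relation (iii) from Section \ref{section_2} rewrites
$$x_{-\alpha}(a_{-\alpha}) = x_\alpha(a_{-\alpha}^{-1})\, \overline{s_\alpha}\, x_\alpha(a_{-\alpha})\, a_{-\alpha}^{\alpha^\vee}.$$
The trailing factor $x_\alpha(a_{-\alpha}) a_{-\alpha}^{\alpha^\vee}$ is commuted past $\delta_0$ (valid because $\delta_0^{-1}\alpha > 0$) and absorbed into $B$, while the remaining $x_\beta$-factors sitting to the left of $\overline{s_\alpha}$ are moved across $\overline{s_\alpha}$ via Chevalley's commutator formula, with every surviving root subgroup landing inside $(U^-)^{s_\alpha}$. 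This is exactly the opening calculation in the proof of Proposition \ref{general_1_alt}, placing $O$ inside $(U^-)^{s_\alpha} \overline{s_\alpha} \delta_0 B = U_\alpha$.

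The main technical obstacle is verifying that the new terms produced by the Chevalley commutator formula remain inside the prescribed root subgroups of $(U^-)^{s_\alpha}$ and do not leak into unwanted factors. The correct target root set for the $U_\alpha$-parameterization is recorded in Lemma \ref{roots_first_term}, and the mechanism controlling these new terms follows the same pattern used in Lemma \ref{new_terms_1}. Once this bookkeeping is in place, the dimension and irreducibility observations from the first two paragraphs close the proof.
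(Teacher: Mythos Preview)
Your skeleton is correct and matches the paper's: both argue by taking the open locus $\{a_{-\alpha}\neq 0\}$ in $U_e$, rewriting $x_{-\alpha}(a_{-\alpha})$ via relation (iii), and absorbing the trailing factor $x_\alpha(a_{-\alpha})a_{-\alpha}^{\alpha^\vee}$ into $\delta_0 B$. Your observation $U_\alpha=s_\alpha U_e$ is a clean way to get the dimension equality; the paper instead reads it off from explicit root-group parameterizations of $U_e$ and $U_\alpha$ via Lemma \ref{roots_first_term}.

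Where your write-up goes astray is in the last two paragraphs. After the transformation you have
\[
g=\Bigl(\prod_{\substack{\beta<0\\ \beta\neq -\alpha}} x_\beta(a_\beta')\Bigr)\cdot x_\alpha(a_{-\alpha}^{-1})\cdot \overline{s_\alpha}\,\delta_0 B,
\]
and there is nothing to ``move across $\overline{s_\alpha}$'': since $s_\alpha$ permutes $\Phi^-\setminus\{-\alpha\}$ and sends $\alpha$ to $-\alpha$, every root subgroup appearing to the left of $\overline{s_\alpha}$ already lies in $(U^-)^{s_\alpha}$, so $g\in (U^-)^{s_\alpha}\, s_\alpha\delta_0 B=U_\alpha$ on the nose. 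This is exactly the one-line observation the paper makes. Chevalley's commutator formula is not the mechanism for passing factors through $\overline{s_\alpha}$ (that is conjugation), and in any case neither is needed; nor is any control from Lemma \ref{new_terms_1} required here. The only place Chevalley enters at all is the preliminary reordering of the $U^-$-factors to place $x_{-\alpha}$ rightmost, and even that can be bypassed by simply choosing this ordering of the root subgroups of $U^-$ from the outset, as the paper does.
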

\begin{proof}
By definition of our gallery and by Lemma \ref{roots_first_term}, we know
$$U_e = \prod_{\beta \in R_\alpha} \ku_\beta \prod_{\beta \in R_\alpha'} \ku_\beta \cdot \ku_{-\alpha} \cdot \delta_0 B$$
and
$$U_\alpha = \prod_{\beta \in R_\alpha} \ku_\beta \prod_{\beta \in s_\alpha R_\alpha'} \ku_{\beta} \cdot \ku_{\alpha} \cdot s_\alpha\delta_0 B.$$
This already implies that $\dimension (U_e) = \dimension (U_\alpha)$. A generic elemtent in $U_e$ is of the form
$$ g = \prod_{\beta \in R_\alpha} x_\beta(a_\beta) \prod_{\beta \in R_\alpha'} x_\beta(a_\beta) \cdot x_{-\alpha}(a_\alpha) \cdot \delta_0 B,$$ for some complex parameters $a_\beta$.
If we assume that $a_\alpha \neq 0$ we have already seen in Proposition \ref{general_1_alt} that we can do a simple transformation and obtain
$$ g = \prod_{\beta \in R_\alpha} x_\beta(a_\beta) \prod_{\beta \in R_\alpha'} x_\beta(a_\beta) \cdot x_{\alpha}(a_\alpha^{-1}) \cdot s_\alpha \delta_0 B.$$
Both products consist of elements in $(U^-)^{s_\alpha}$, thus we have $g \in U_\alpha$. By dimension reasons we know that the set of those elements in $U_e$, such that $a_\alpha \neq 0$, is dense in both $U_e$ and $U_\alpha$. Hence the intersection is dense a fortiori.
\end{proof}

With a bit more calculations and controlling of the parameters this can be made a bit more precise.

\begin{remark} \label{origin_improved}
Looking at the final way the element $g$ was written in the proof of Proposition \ref{origin_parameter_change}, one can calculate in a straight-forward but tedious manner that the existence of a generic parameter for $x_\beta(a_\beta)$, with $\beta \in R_\alpha'$ implies that the parameter for $x_{s_\alpha \beta}(c_\beta)$ is generic if we write $g$ in the corresponding coordinates for $U_\alpha$ as mentioned at the beginning of the proof. 
\end{remark}
%%%%%%%%%%%%%%%%%%%%%%%%%%%%%%%%%%%%%%%%%%%%%%%%%%%%%%%%%%%%%%%%%%%%%%%%%%%%%%%%%%%%%%%%%%%%%%%%%%%%%%%%%%%%%%%%%%%%%%%%%%%%%%%%%%%%%%%%
%%%%%%%%%%%%%%%%%%%%%%%%%%%%%%%%%%%%%%%%%%%%%%%%%%%%%%%%%%%%%%%%%%%%%%%%%%%%%%%%%%%%%%%%%%%%%%%%%%%%%%%%%%%%%%%%%%%%%%%%%%%%%%%%%%%%%%%%
%%%%%%%%%%%%%%%%%%%%%%%%%%%%%%%%%%%%%%%%%%%%%%%%%%%%%%%%%%%%%%%%%%%%%%%%%%%%%%%%%%%%%%%%%%%%%%%%%%%%%%%%%%%%%%%%%%%%%%%%%%%%%%%%%%%%%%%%
% Ab hier überarbeiten
%%%%%%%%%%%%%%%%%%%%%%%%%%%%%%%%%%%%%%%%%%%%%%%%%%%%%%%%%%%%%%%%%%%%%%%%%%%%%%%%%%%%%%%%%%%%%%%%%%%%%%%%%%%%%%%%%%%%%%%%%%%%%%%%%%%%%%%%
%%%%%%%%%%%%%%%%%%%%%%%%%%%%%%%%%%%%%%%%%%%%%%%%%%%%%%%%%%%%%%%%%%%%%%%%%%%%%%%%%%%%%%%%%%%%%%%%%%%%%%%%%%%%%%%%%%%%%%%%%%%%%%%%%%%%%%%%
%%%%%%%%%%%%%%%%%%%%%%%%%%%%%%%%%%%%%%%%%%%%%%%%%%%%%%%%%%%%%%%%%%%%%%%%%%%%%%%%%%%%%%%%%%%%%%%%%%%%%%%%%%%%%%%%%%%%%%%%%%%%%%%%%%%%%%%%

\subsection*{General case}

We now want to use the result for the simple roots to obtain the general result by an inductive argument. What we have seen so far is, if we start with $\delta \in \Gamma_{LS}^+(\gamma_\lambda)$, we know that there exists a dense subset $O_{id} \subset C(\delta)$ such that $r_{w_0s_\alpha}(g)=\Xi_{s_\alpha}(\delta)$ for all simple roots $\alpha$ and $g \in O_{id}$. 
If we now fix a simple root $\beta$, the same is also true for $\Xi_{s_\beta}(\delta) \in \Gamma_{LS}^{s_\beta}(\gamma_\lambda)$, hence there exists a dense subset $O_{s_\beta} \subset C_{s_\beta}(\Xi_{s_\beta}(\delta))$ such that $r_{w_0s_\beta s_\alpha}(g)=\Xi_{s_\beta s_\alpha}(\delta)$ for all simple roots $\alpha$ and $g \in O_{s_\beta}$.

Thus to obtain the result for the general case we have to show that $O_{id} \cap O_{s_\beta}$ is dense in $O_{id}$ and also more generally if we take any $w  \in W \setminus \{w_0\}$ that $O_{id} \cap O_w$ is dense in $O_{id}$, where $O_w$ is defined in the same way as $O_{s_\beta}$ above.

\begin{theorem}\label{finalresult}
For $\delta \in \Gamma_{LS}^+(\gamma_\lambda)$, there exists a dense subset $O_\delta \subset C(\delta)$ such that for every $g \in O_\delta$ it holds
$$ r_{w_0w}(g) = \Xi_w(\delta) \text{ for all } w \in W. $$
\end{theorem}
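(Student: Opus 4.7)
The plan is to induct on $l(w)$, bootstrapping from the simple-root case of Theorem \ref{case_simple_refl} by means of the recursion $\Xi_{ws_\alpha}(\delta) = \Xi_{s_{w\alpha}}(\Xi_w(\delta))$ from Remark \ref{recursive_xi}. Since $W$ is finite, it suffices to produce, for each fixed $w \in W$, a dense open $O_\delta^w \subset C(\delta)$ on which $r_{w_0 w} = \Xi_w(\delta)$, and then set $O_\delta := \bigcap_{w \in W} O_\delta^w$. The base case $w = \mathrm{id}$ is immediate, because $r_{w_0}$ contracts all of $C(\delta)$ to $\delta = \Xi_{\mathrm{id}}(\delta)$, so $O_{\mathrm{id}}$ from Theorem \ref{case_simple_refl} already serves.

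For the inductive step, fix a reduced expression $w = s_{i_1}\cdots s_{i_k}$ and write $w_j = s_{i_1}\cdots s_{i_j}$. Assume a dense open $O_{j-1} \subset C(\delta)$ satisfying $r_{w_0 w_{j-1}}(g) = \Xi_{w_{j-1}}(\delta)$ has been constructed. By Remark \ref{recursive_xi}, $\Xi_{w_j}(\delta) = \Xi_{s_{w_{j-1}\alpha_{i_j}}}(\Xi_{w_{j-1}}(\delta))$, and $\Xi_{w_{j-1}}(\delta)$ lies in $\Gamma_{LS}^{w_{j-1}}(\gamma_\lambda)$. The constructions of Sections \ref{section_5} and \ref{section_6} are $W$-equivariant: swapping $\mathfrak C_{-f}$ for $w_{j-1}\mathfrak C_{-f}$ corresponds, as observed after Proposition \ref{BB-cells}, to a different choice of Borel and of positive roots. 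Theorem \ref{case_simple_refl}, applied in this twisted chart to $\Xi_{w_{j-1}}(\delta)$ and the root $\alpha_{i_j}$ (simple in the $w_{j-1}$-shifted system), therefore yields a dense open $\widetilde O \subset C_{w_{j-1}}(\Xi_{w_{j-1}}(\delta))$ on which $r_{w_0 w_j}$ agrees with $\Xi_{w_j}(\delta)$. Define $O_j := O_{j-1} \cap r_{w_0 w_{j-1}}^{-1}(\widetilde O)$; on this set all retraction equalities through step $j$ hold.

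The main obstacle is to check that $O_j$ is dense in $O_{j-1}$. Openness is automatic. Density requires pulling the finitely many non-vanishing inequalities cutting out $\widetilde O$ back through the iterated change of chart between $\ku_\delta$ and $\ku_{\Xi_{w_{j-1}}(\delta)}^{w_{j-1}}$. Proposition \ref{independence_non_origin} says that at each single step the new coordinate $b_s$ is a non-constant rational function of the old coordinate $a_s$, with a triangular dependence on the earlier coordinates and on the sum-of-critical-indices variables; Lemma \ref{roots_first_term}, Proposition \ref{origin_parameter_change} and Remark \ref{origin_improved} play the analogous role at the origin, sending generic parameters to generic ones. The composition of finitely many such non-constant rational transformations is still non-constant in each coordinate, so every defining inequality of $\widetilde O$ pulls back to a proper rational condition on the $a_\bullet$; removing the resulting finite union of proper Zariski-closed loci still leaves a dense open subset of $O_{j-1}$.

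Setting $O_\delta^w := O_k$ and intersecting over all $w \in W$ yields the desired dense $O_\delta \subset C(\delta)$ on which $r_{w_0 w}(g) = \Xi_w(\delta)$ for every $w$, completing the induction.
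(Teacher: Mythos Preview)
Your overall architecture matches the paper's exactly: induction on $l(w)$ along a reduced word, using the recursion of Remark \ref{recursive_xi} and the $W$-equivariance of the whole setup to reduce each step to the simple-reflection case, then intersecting. The only point where your argument is thinner than the paper's is the density check at the origin coordinate $g_0$, and that is precisely where the paper spends most of its effort.

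Your sentence ``the composition of finitely many such non-constant rational transformations is still non-constant in each coordinate'' does not quite close the gap. The coordinates at $g_0$ are indexed by a set of roots that \emph{changes} at every step (Lemma \ref{roots_first_term}: one passes from $R_\alpha \cup R_\alpha' \cup \{-\alpha\}$ to $R_\alpha \cup s_\alpha R_\alpha' \cup \{\alpha\}$), so there is no fixed coordinate system in which to speak of ``each coordinate''. What has to be shown is that the particular root $-\beta_{k_r}$ whose parameter must be nonzero at step $k_r$ actually corresponds, through the chain of chart changes, to some root $-\gamma_0$ that was present in the original chart $\ku_\delta$. The paper does this by an explicit backward induction: starting from $\gamma_r := \beta_{k_r}$, it defines $\gamma_{r-j-1}$ to be either $\gamma_{r-j}$ or $t_{r-j-1}\gamma_{r-j}$ according to the sign of $(t_{r-j-2}\cdots t_1\delta_0)^{-1}(-\gamma_{r-j})$, and verifies at each stage that $-\gamma_{r-j}$ lies in $R_{\beta_{k_{r-j}}}$ or $R_{\beta_{k_{r-j}}}'$. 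This uses that $w = s_{i_1}\cdots s_{i_m}$ is reduced. Only then do Proposition \ref{origin_parameter_change} and Remark \ref{origin_improved} apply to conclude that the parameter for $-\beta_{k_r}$ is generic. Citing those results without this root-tracking leaves the density of $O_j$ in $O_{j-1}$ unproved; you should either supply this argument or replace it by a clean statement that the successive chart changes at $g_0$ are birational between the relevant affine cells (which is what Proposition \ref{origin_parameter_change} gives at a single step) and that the needed root is always in range.
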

\begin{proof}
We want to define the subset $O_\delta$ inductively. For this let
$$ W_1 \subset W_2 \subset \ldots \subset W_l = W, $$ be the subsets of $W$ such that $W_i = \{ w \in W \mid l(w) \leq i \}$ and $l=l(w_0)$. We want to define a decreasing series of dense subsets $O_\delta^i$ of $C(\delta)$ such that for any $g \in O_\delta^i$ and $w \in W_i$ it holds that 
$$ r_{w_0w}(g)=\Xi_w(\delta). $$
Of course the subset $O_\delta^l$ is the one needed for the proof of the statement.

Theorem \ref{case_simple_refl} is already the proof for $W_1$, but to use an inductive argument we have to be a bit more careful about the parameters to make it work. As seen before in this section in Proposition \ref{independence_non_origin}, we do not explicitly know how the parameters change, but the structure can be seen quite well. One observation is that the parameters for all indices of positive foldings stay algebraically independent if we view them as rational functions with our original parameters as indeterminantes.

Hence let us take $w \in W_i$ and assume that the theorem already holds for $W_{i-1}$. Let us now take $w' \in W_{i-1}$ and $w'\alpha \in w'\Phi^+$ a simple root such that $w = w's_\alpha = w' s_\alpha w'^{-1} w'$. For $g$ in $O_\delta^{i-1}$ we know that we can successively change the coordinates until we have written $g$ with respect to $\Xi_{w'}(\delta)$. 

If we now want to apply the coordinate change to obtain $g$ with respect to the combinatorial gallery $\Xi_w(\delta)$, we have to make sure that we can choose generic parameters for $g$ in such a way that they fulfil all the needed inequalities. By Proposition \ref{independence_non_origin} we know that by starting with generic coefficients, we will always have generic coefficients at all positive foldings, the only dependencies occur with some positive crossings that are of no concern for the calculations. The only position where we have to make sure that we are not missing any coefficients, is $g_0$.

For this let us take $w=s_{i_1} \cdots s_{i_m}$ a reduced decomposition of $w$, such that $s_{i_m}=s_\alpha$. Hence we need to successively transform the gallery to $\Xi_{w^l}(\delta)$ for $0 \leq l \leq m$. For ease of notation we write $w^l=s_{i_1} \cdots s_{i_l}$ for $1 \leq l \leq m$, and define $\beta_l=w^{l-1}\alpha_{i_l}$. We assume that we could already write our gallery with respect to $w^{m-1}$, hence we now need to apply the changes with respect to $\beta_m$. If propositions \ref{general_1_alt} and \ref{general_2_alt} do not apply in this situation it just means that we do not need to change anything at the first position of our gallery, thus no problem can arise. Hence let us assume that one of the propositions applies.

We need to fix some integers $0=k_1 < \ldots < k_r=m$ in $\{0, \ldots, m\}$. These shall be all the positions where we needed to change the first position of our gallery when moving from the coordinates with respect to $\Xi_{w^{k_l}}(\delta)$ to the ones with respect to $\Xi_{w^{k_l+1}}(\delta)$. We also fix the following elements
$$t_l = w^{k_l-1} s_{i_{k_l}} (w^{k_l-1})^{-1}, $$
these are exactly the reflections that are added to our element of the Weyl group at the first position in each of the steps where we apply one of the two proposition. Hence the Weyl group element at the first position is of the form.
$$ t_{r-1} \cdots t_1 \delta_0.$$
In addition we know that 
$$ (t_{r-1} \cdots t_1 \delta_0)^{-1} (-\beta_{k_r}) < 0, $$
as we otherwise would not need to apply one of the propositions. The question is if the one-parameter subgroup corresponding to $-\beta_{k_r}$, has a generic parameter in this step. To be sure that this does not pose a problem, we need to use Proposition \ref{origin_parameter_change}, Remark \ref{origin_improved}, and the following simple calculation to see that the one-parameter subgroup really has a generic parameter.

For this let us define $\gamma_r=\beta_{k_r}$. Then we know
$$(t_{r-1} \cdots t_1 \delta_0)^{-1}(-\gamma_r) <0,$$
hence
$$(t_{r-2} \cdots t_1 \delta_0)^{-1}(-t_{r-1}\gamma_r) < 0.$$
In addition its a straightforward calculation to see that 
$$ (w^{k_{r-1}-1})^{-1} (-t_{r-1} \gamma_r) <0 \text{ and } (w^{k_{r-1}-1})^{-1} (-\gamma_r) <0, $$
just by definition and the fact that $w=s_{i_1} \cdots s_{i_m}$ is a reduced expression.

If we now look at $(t_{r-2} \cdots t_1 \delta_0)^{-1}(-\gamma_r)$, there are two possibilities. Either this is negative, then by definition
$$ -\gamma_r \in R_{\beta_{k_{r-1}}}$$
and we define $\gamma_{r-1} = \gamma_r$. On the other hand, if $(t_{r-2} \cdots t_1 \delta_0)^{-1}(-\gamma_r)$ is positive, we know
$$ -t_{r-1}\gamma_r \in R_{\beta_{k_{r-1}}}'$$
by definition and we fix $\gamma_{r-1} = t_{r-1}\gamma_r$.

No matter how we defined $\gamma_{r-1}$ in each case, the one parameter subgroup corresponding to $-\beta_{k_r}$ exists in our situation if the one for $-\gamma_{r-1}$ existed in the previous transformation step $k_{r-1}$. Thus we iterate this process. 

If we inductively defined $\gamma_{r-j}$ such that 
$$(t_{r-j-1} \cdots t_1 \delta_0)^{-1} (-\gamma_{r-j})<0 \text{, } (w^{k_{r-j}-1})^{-1} (-\gamma_{r-j})<0 \text{, and } $$
$$ (w^{k_{r-j}-1})^{-1} (-t_{r-j}\gamma_{r-j}) < 0,$$
i.e., $\gamma_{r-j}$ is one of the roots whose one-parameter subgroup is allowed in the transformation step $k_{r-j}$.
Then we can again look at 
$$(t_{r-j-2} \cdots t_1 \delta_0)^{-1} (-\gamma_{r-j})$$
and if this is negative we define $\gamma_{r-j-1}$ as $\gamma_{r-j}$ and otherwise as $t_{r-j-1} \gamma_{r-j}$. The properties of $\gamma_{r-j}$ above imply 
$$(w^{k_{r-j-1}-1})^{-1} (-\gamma_{r-j-1})<0 \text{ and } (w^{k_{r-j-1}-1})^{-1} (-t_{r-j-1}\gamma_{r-j-1}) < 0.$$
Thus again we have an element that is either in $R_{\beta_{k_{r-j-1}}}$ or $R_{\beta_{k_{r-j-1}}}'$.

	After we have inductively defined all these elements we look at $-\gamma_0$, which is an element that satisfies $-\gamma_0<0$ and $\delta_0^{-1} (-\gamma_0)<0$. Thus the one-parameter subgroup corresponding to $-\gamma_0$ must exist by assumption on the genericness of the initial parameters. By definition it will be transformed to a element in the one-parameter subgroup corresponding to $-\beta_{k_r}$, after the transformations to $\Xi_{w^{m-1}}(\delta)$. Furthermore Proposition \ref{origin_parameter_change} and Remark \ref{origin_improved} tell us that this subgroup will have a generic parameter.

Thus we can use the proposition and have a generic non-zero parameter to work with. This means that we can perform the transformation for $w$ and we can thus define $O_\delta^i$ by looking at all elements $w$ of length $i$. This completes the proof.
\end{proof}

\subsection*{Results}

Combining Theorem \ref{finalresult} with Proposition \ref{retraction_galleries} we obtain the following results about MV-cycles and MV-polytopes.

\begin{corollary}
Let $M_\delta$ be an MV-cycle corresponding to $\delta \in \Gamma_{LS}^+(\gamma_\lambda)$, then 
$$ M_\delta := \overline{\bigcap_{w \in W} S^w_{\nu_w}}, \text{ with } \nu_w:=wt(\Xi_w(\delta)). $$
\end{corollary}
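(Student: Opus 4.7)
The proof plan is to combine Theorem \ref{finalresult} with the GGMS-strata description given by Proposition \ref{retraction_galleries}. First, I would start with the dense open subset $O_\delta \subset C(\delta)$ provided by Theorem \ref{finalresult}, on which $r_{w_0w}(g) = \Xi_w(\delta)$ holds for every $w \in W$. Since $C(\delta) = \hat{r}_{w_0}^{-1}(\delta)$ and the restriction of $\pi$ identifies the minimal galleries in the Bott--Samelson variety with $\grass_\lambda$, the image $\pi(O_\delta) \subset \grass_\lambda$ is dense in $r_{w_0}^{-1}(\delta)$, whose closure is precisely the MV-cycle $M_\delta = Z(\delta)$ corresponding to $\delta$.

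Next, I would invoke the theorem relating retractions to semi-infinite orbits (the Gaussent--Littelmann theorem stated just before Proposition \ref{retraction_galleries}): from $r_{w_0 w}(x) = \Xi_w(\delta)$ and the fact that $\Xi_w(\delta)$ has target $\nu_w = wt(\Xi_w(\delta))$, one deduces $x \in S^w_{\nu_w}$. Applying this for every $w \in W$, the image $\pi(O_\delta)$ sits inside the GGMS-stratum $A(\nu_\bullet) := \bigcap_{w \in W} S^w_{\nu_w}$. In particular $A(\nu_\bullet)$ is non-empty, so the compatibility inequalities $\nu_v \geq_w \nu_w$ are automatic and $A(\nu_\bullet)$ is a genuine GGMS-stratum.

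To close the argument, I would apply Proposition \ref{retraction_galleries} directly to the MV-cycle $M_\delta$: it supplies an open dense subset $O \subset M_\delta \cap \grass_\lambda$ together with LS-galleries $\delta_w$ such that $r_{w_0 w}(x) = \delta_w$ for all $x \in O$, identifying the canonical GGMS-stratum of $M_\delta$ with $A(\mu_\bullet)$ for $\mu_w = wt(\delta_w)$. Since both $\pi(O_\delta)$ and $O$ are open dense in the irreducible subset $M_\delta \cap \grass_\lambda$, their intersection is non-empty; at a common point the two retractions must agree, forcing $\delta_w = \Xi_w(\delta)$ and hence $\mu_w = \nu_w$ for every $w$. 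Taking closures then gives $M_\delta = \overline{A(\nu_\bullet)} = \overline{\bigcap_{w \in W} S^w_{\nu_w}}$.

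The main subtlety I anticipate is the bookkeeping between conventions: confirming that $r_{w_0 w}$ indeed lands in $\Gamma^w$ compatibly with the indexing of semi-infinite orbits $S^w_{\nu_w}$, and verifying that $\pi(O_\delta)$ is dense in the fibre $r_{w_0}^{-1}(\delta)$ rather than in some smaller subset, which uses the dimension count guaranteed by $\delta$ being an LS-gallery. Once these bookkeeping points are settled, the proof reduces to the density-plus-uniqueness observation that two open dense subsets of the irreducible variety $M_\delta \cap \grass_\lambda$ must intersect, pinning down the retraction images as the explicit galleries $\Xi_w(\delta)$.
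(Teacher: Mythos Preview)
Your proposal is correct and follows exactly the route the paper indicates: the paper offers no explicit proof beyond the sentence ``Combining Theorem~\ref{finalresult} with Proposition~\ref{retraction_galleries} we obtain the following results'', and what you have written is precisely a careful unpacking of that combination. Your density-plus-uniqueness step (intersecting $\pi(O_\delta)$ with the open set $O$ from Proposition~\ref{retraction_galleries} inside the irreducible $M_\delta\cap\grass_\lambda$ to force $\delta_w=\Xi_w(\delta)$) is the intended mechanism, and the bookkeeping issue you flag about the indexing $r_{w_0w}$ versus $\Gamma^w$ and $S^w_{\nu_w}$ is a genuine notational inconsistency in the paper rather than a gap in your argument.
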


\begin{corollary}
Let $M_\delta$ be as above and $P=\Phi(M_\delta)$ the corresponding MV-polytope, then
$$ P = P_\delta:=\conv(\{wt(\Xi_w(\delta)) \mid w \in W \} ). $$
\end{corollary}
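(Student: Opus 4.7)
The plan is to chain three ingredients: the preceding corollary, Anderson's moment map formula for GGMS-strata, and the identification of vertices of the resulting moment polytope with the coweights $wt(\Xi_w(\delta))$ via the Bia{\l}ynicki--Birula decomposition.

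First, the preceding corollary gives $M_\delta = \overline{A(\nu_\bullet)}$, where $\nu_w := wt(\Xi_w(\delta))$. Applying the formula $\boldsymbol{\mu}(\overline{A(\mu_\bullet)}) = \bigcap_{w \in W} C^w_{\mu_w}$ recalled in Section~\ref{section_3} then yields
$$ P = \boldsymbol{\mu}(M_\delta) = \bigcap_{w \in W} C^w_{\nu_w}, $$
reducing the corollary to the combinatorial identity $\bigcap_{w\in W} C^w_{\nu_w} = \conv\{\nu_w \mid w \in W\}$.

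One inclusion is immediate: the non-emptiness of $A(\nu_\bullet)$ forces $\nu_v \geq_w \nu_w$, i.e., $\nu_v \in C^w_{\nu_w}$ for every pair $v,w \in W$, so $\conv\{\nu_w\} \subseteq \bigcap_w C^w_{\nu_w}$ by convexity of the intersection of half-spaces. For the reverse inclusion, I would use that $P$ is the moment polytope of an irreducible $T$-stable projective variety, hence equals the convex hull of the images of its $T$-fixed points, and combine this with Proposition~\ref{BB-cells} and Theorem~\ref{finalresult}: for each $w \in W$ the Bia{\l}ynicki--Birula flow attached to a generic one-parameter subgroup in $w\mathfrak{C}_f$ contracts a dense subset of $M_\delta$ to the $T$-fixed point $L_{\nu_w}$, which maps to $\nu_w$ under $\boldsymbol{\mu}$. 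Hence every vertex of $P$ is some $\nu_w$, and a bounded convex polytope equals the convex hull of its vertices.

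The main step, and the one for which the entire machinery of Section~\ref{section_6} was set up, is the identification of the $w$-extreme $T$-fixed point of $M_\delta$ with $L_{\nu_w}$ via the retraction identity $r_{w_0 w}(g) = \Xi_w(\delta)$ of Theorem~\ref{finalresult}. Given this, the remainder is standard moment-polytope and GGMS-stratum bookkeeping; in particular it requires no tropical Pl\"ucker input, which is precisely the promise of the introduction.
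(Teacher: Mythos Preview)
Your proposal is correct and follows the same route the paper intends: deduce the corollary from the preceding one together with the GGMS/moment-map identity $\boldsymbol{\mu}(\overline{A(\mu_\bullet)})=P(\mu_\bullet)$ recalled in Section~\ref{section_3}. The paper itself gives no further argument beyond ``combining Theorem~\ref{finalresult} with Proposition~\ref{retraction_galleries}'' and the citations to \cite{Kam2} in Section~\ref{section_3}; your write-up simply unpacks this.

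The one place where you add something the paper leaves to \cite{Kam2} is the identification $\bigcap_{w} C^w_{\nu_w}=\conv\{\nu_w\}$. The paper implicitly relies on Kamnitzer's elementary analysis of pseudo-Weyl polytopes (the intersection of the cones $C^w_{\mu_w}$, for coweights satisfying the edge inequalities forced by $A(\mu_\bullet)\neq\emptyset$, has exactly the $\mu_w$ as vertices). You instead argue via the Bia{\l}ynicki--Birula flow: for generic $\eta$ in $w\mathfrak{C}_f$ the dense open $O_\delta$ flows to the fixed point $L_{\nu_w}$, so the $\eta$-extreme vertex of the moment polytope is $\nu_w$; since every vertex of a polytope is extreme for some generic direction, and generic directions lie in open Weyl chambers, all vertices are accounted for. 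This is a legitimate alternative and has the virtue of staying inside the gallery/retraction framework rather than invoking the polytope combinatorics of \cite{Kam2}, which is consistent with the paper's stated aim of avoiding the tropical Pl\"ucker description.
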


\begin{corollary}
Let $\delta \in \Gamma_{LS}^+(\gamma_\lambda)$, then
$$ \bigcap_{w \in W} C_w(\Xi_w(\delta)) $$
is dense in $C(\delta)$.
\end{corollary}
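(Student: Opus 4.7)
The plan is to derive this corollary as an immediate consequence of Theorem \ref{finalresult}. By Proposition \ref{BB-cells}, the cell $C_w(\eta) = \hat{r}_w^{-1}(\eta)$ is by definition the set of galleries in the Bott-Samelson variety whose retraction at the chamber $w$ at infinity equals $\eta$, so
$$\bigcap_{w \in W} C_w(\Xi_w(\delta)) \;=\; \bigl\{\, g \in \hat{\Sigma}(\gamma_\lambda) \;\bigm|\; \hat{r}_w(g) = \Xi_w(\delta) \text{ for every } w \in W \,\bigr\}.$$

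Theorem \ref{finalresult} constructs a dense subset $O_\delta \subseteq C(\delta)$ whose members satisfy precisely these equations (under the re-indexing $w \mapsto w_0 w$ used to match retraction centres with chambers at infinity throughout Section \ref{section_6}). The case $w = {\rm id}$ is automatic from $g \in C(\delta) = C_{\rm id}(\delta)$ together with $\Xi_{\rm id}(\delta) = \delta$, which is immediate from Definition \ref{xiidef} since no root operator is applied in an empty reduced decomposition.

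Therefore $O_\delta \subseteq \bigcap_{w \in W} C_w(\Xi_w(\delta))$, and since $O_\delta$ is dense in $C(\delta)$ by Theorem \ref{finalresult}, the intersection is also dense in $C(\delta)$. Beyond invoking that theorem, the corollary requires no further argument; the genuine obstacle has already been overcome inside Theorem \ref{finalresult}, namely the delicate inductive bookkeeping ensuring that after each sequence of coordinate changes carried out in propositions \ref{general_2}, \ref{general_1_alt}, and \ref{general_2_alt}, the parameter at the leading position $g_0$ of the gallery remains generic enough for the next simple-reflection step to apply (this is precisely the role of the auxiliary roots $\gamma_{r-j}$ and of Proposition \ref{origin_parameter_change} together with Remark \ref{origin_improved}).
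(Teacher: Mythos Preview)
Your proposal is correct and matches the paper's intended argument: the paper does not give a separate proof for this corollary but simply lists it (together with the two preceding corollaries) as a consequence of Theorem \ref{finalresult}, and your derivation is exactly that unpacking. The only subtlety you flag---the $w \mapsto w_0 w$ re-indexing between the retraction labels in Theorem \ref{finalresult} and the cells $C_w$ in the statement---is indeed present in the paper's conventions and is harmless since the map is a bijection of $W$.
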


\bibliographystyle{alpha}
\bibliography{literature}

\end{document}